\definecolor{darkblue}{rgb}{.2, 0.2,.8}
\definecolor{carageen}{rgb}{0,0.5,0.3}
\definecolor{darkred}{rgb}{.8, .1,.1}
\newcommand{\red}{\color{darkred}}
\newcommand{\levy}{L\'evy}
\newcommand{\slln}{strong law of large numbers}
\newcommand{\ex}{{\rm e}\,}
\newcommand{\asy}{asymptotic}
\newtheorem{lemma}{Lemma}[section]
\newtheorem{theorem}[lemma]{Theorem}
\newtheorem{proposition}[lemma]{Proposition}
\newtheorem{definition}[lemma]{Definition}
\newtheorem{corollary}[lemma]{Corollary}
\newtheorem{example}[lemma]{Example}
\newtheorem{exercise}[lemma]{Exercise}
\newtheorem{remark}[lemma]{Remark}
\newtheorem{fig}[lemma]{Figure}
\newtheorem{tab}[lemma]{Table}
\newcommand{\bfi}{{\bf i}}
\newcommand{\bfu}{{\bf u}}
\newcommand{\bfv}{{\bf v}}
\newcommand{\bfU}{{\bf U}}
\newcommand{\bth}{\begin{theorem}}
\newcommand{\ethe}{\end{theorem}}
\newcommand{\bre}{\begin{remark}\em }
\newcommand{\ere}{\end{remark}}
\newcommand{\ble}{\begin{lemma}}
\newcommand{\ele}{\end{lemma}}
\newcommand{\pp}{point process}
\newcommand{\bde}{\begin{definition}}
\newcommand{\ede}{\end{definition}}
\newcommand{\bco}{\begin{corollary}}
\newcommand{\eco}{\end{corollary}}
\newcommand{\bpr}{\begin{proposition}}
\newcommand{\epr}{\end{proposition}}
\newcommand{\bexer}{\begin{exercise}}
\newcommand{\eexer}{\end{exercise}}
\newcommand{\bexam}{\begin{example}}
\newcommand{\eexam}{\end{example}}
\newcommand{\bfig}{\begin{fig}}
\newcommand{\efig}{\end{fig}}
\newcommand{\btab}{\begin{tab}}
\newcommand{\etab}{\end{tab}}
\newcommand{\lhs}{left-hand side}
\newcommand{\rv}{random variable}
\newcommand{\var}{{\rm var}}
\newcommand{\cov}{{\rm cov}}
\newcommand{\as}{{\rm a.s.}}
\newcommand{\Pois}{\rm Poisson}
\newcommand{\rhs}{right-hand side}
\newcommand{\beao}{\begin{eqnarray*}}
\newcommand{\eeao}{\end{eqnarray*}\noindent}
\newcommand{\beam}{\begin{eqnarray}}
\newcommand{\eeam}{\end{eqnarray}\noindent}
\newcommand{\beqq}{\begin{equation}}
\newcommand{\eeqq}{\end{equation}\noindent}
\newcommand{\bce}{\begin{center}}
\newcommand{\ece}{\end{center}}
\newcommand{\barr}{\begin{array}}
\newcommand{\earr}{\end{array}}
\newcommand{\cadlag}{c\`adl\`ag}
\newcommand{\stp}{\stackrel{\P}{\rightarrow}}
\newcommand{\std}{\stackrel{d}{\rightarrow}}
\newcommand{\stas}{\stackrel{\rm a.s.}{\rightarrow}}
\newcommand{\eqd}{\stackrel{d}{=}}
\newcommand{\vague}{\stackrel{\lower0.2ex\hbox{$\scriptscriptstyle
                    \it{v} $}}{\rightarrow}}
\newcommand{\weak}{\stackrel{\lower0.2ex\hbox{$\scriptscriptstyle
                    \it{w} $}}{\rightarrow}}
\newcommand{\what}{\stackrel{\lower0.2ex\hbox{$\scriptscriptstyle
                    \it{\hat{w}} $}}{\rightarrow}}
\newcommand{\bdis}{\begin{displaymath}}
\newcommand{\edis}{\end{displaymath}\noindent}
\newcommand{\R}{\mathbb{R}}
\newcommand{\nto}{n\to\infty}
\newcommand{\ov}{\overline}
\newcommand{\wt}{\widetilde}
\newcommand{\wh}{\widehat}
\newcommand{\vep}{\varepsilon}
\newcommand{\la}{\lambda}
\newcommand{\bbr}{{\mathbb R}}
\newcommand{\con}{convergence}
\newcommand{\st}{such that}
\newcommand{\fif}{if and only if}
\newcommand{\wrt}{with respect to}
\newcommand{\chf}{characteristic function}
\newcommand{\fct}{function}
\newcommand{\ds}{distribution}
\newcommand{\rep}{representation}
\newcommand{\seq}{sequence}
\newcommand{\pro}{probabilit}
\newcommand{\ms}{measure}
\newcommand{\bfx}{{\bf x}}
\newcommand{\bfX}{{\bf X}}
\newcommand{\bfY}{{\bf Y}}
\newcommand{\bfZ}{{\bf Z}}
\newcommand{\bfa}{{\bf a}}
\newcommand{\bfb}{{\bf b}}
\newcommand{\bft}{{\bf t}}
\newcommand{\bfs}{{\bf s}}
\newcommand{\E }{{\mathbb E}}
\renewcommand{\P }{{\mathbb P}}
\newcommand{\1}{{\mathbf 1}}
\begin{document}
\today
\bibliographystyle{plain}
\title[Distance covariance for random fields]{Distance covariance
for random fields}


\author[M. Matsui]{Muneya Matsui}
\address{Department of Business Administration, Nanzan University, 18
Yamazato-cho, Showa-ku, Nagoya 466-8673, Japan.}
\email{mmuneya@gmail.com}

\author[T. Mikosch]{Thomas Mikosch}
\address{Department  of Mathematics,
University of Copenhagen,
Universitetsparken 5,
DK-2100 Copenhagen,
Denmark}
\email{mikosch@math.ku.dk}
\author[R. Roozegar]{Rasool Roozegar}
\address{Department of Statistics, Yazd University, P.O. Box 89195-741, Yazd, Iran}
\email{rroozegar@yazd.ac.ir}
\author[L. Tafakori]{Laleh Tafakori}
\address{Department of Mathematical Sciences,
School of Science,
RMIT University, Melbourne, VIC, 
Australia}
\email{laleh.tafakori@rmit.edu.au} 

\begin{abstract}{
We study an independence test based on distance correlation for 
random fields $(X,Y)$. We consider the situations when $(X,Y)$ is observed 
on a lattice with equidistant grid sizes and when $(X,Y)$ is observed at 
random locations. We provide \asy\ theory for the sample distance correlation
in both situations and show bootstrap consistency. The latter fact allows 
one to build a test for independence of $X$ and $Y$ based on the considered 
discretizations of these fields. We illustrate the performance of the bootstrap test
by simulations, and apply the test to Japanese meteorological data observed over the entire area of Japan.}\\
Keywords: Empirical \chf , distance covariance, random field, independence test 
\end{abstract}
\subjclass{Primary 62E20; Secondary 62G20 62M99 60F05 60F25}
\maketitle

\section{Introduction to model}
\subsection{Distance covariance in Euclidean space and literature review}
It is well known that  two $q$- and $r$-dimensional random vectors $\bfX$ and
$\bfY$, respectively,  are independent \fif\ their joint  \chf\ 
factorizes, i.e.,
\beao
\varphi_{\bfX,\bfY}(\bfs,\bft)&=&
\E\big[\exp(i\,\bfs^\top \bfX+i\bft^\top \bfY)\big]\\
&=&
\E\big[\exp(i\,\bfs^\top \bfX)\big]\,\E\big[\exp(i\,\bft^\top \bfY)\big]\\&=&
\varphi_\bfX(\bfs)\,\varphi_\bfY(\bft)\,,\qquad
\bfs\in\bbr^q\,,\bft\in\bbr^r\,.
\eeao
However, this identity is difficult to check if one has data 
at the disposal; a replacement of the corresponding \chf s 
by empirical versions does not lead to powerful statistical tools for
detecting independence between $\bfX$ and $\bfY$. 
In the univariate case, Klebanov and Zinger 
\cite{klebanov:zinger:1990},  Feuerverger \cite{feuerverger:1993}, and later Sz\'ekely et al. \cite{szekely:rizzo:bakirov:2007,szekely:rizzo:2009,szekely:rizzo:2013,szekely:rizzo:2014} in the general multivariate case recommended to
use a weighted $L^2$-distance between  $\varphi_{\bfX,\bfY}$ and 
$\varphi_\bfX\,\varphi_\bfY$: for $\beta\in (0,2)$, the {\em distance covariance between $\bfX$ and $\bfY$} is given by
\beao
T_\beta(\bfX,\bfY)= c_qc_r\int_{\bbr^{q+r}} \big|\varphi_{\bfX,\bfY}(\bfs,\bft)-\varphi_\bfX(\bfs)\varphi_\bfY(\bft)\big|^2 
|\bfs|^{-(q+\beta)}|\bft|^{-(r+\beta)}\,d\bfs\, d\bft\,,
\eeao
where the constants $c_d$ for $d\ge 1$  are chosen \st\
\beam\label{eq:may9}
c_d\,\int_{\bbr^d} (1-\cos(\bfs'\bfx))\,|\bfx|^{-(d+\beta)}d\bfx= |\bfs|^\beta\,.
\eeam
Here and in what follows, we suppress the dependence of the 
Euclidean norm $|\cdot|$ on the dimension; it will always be clear
from the context what the dimension is.
\par
The quantity $T_\beta(\bfX,\bfY)$ is finite under suitable moment conditions on $\bfX,\bfY$.
The corresponding {\em distance correlation} is given by
\beam\label{eq:august20a}
R_\beta(\bfX,\bfY)= \dfrac{T_\beta(\bfX,\bfY)}{\sqrt{T_\beta(\bfX,\bfX)}\sqrt{T_\beta(\bfY,\bfY)}}\,.
\eeam
Of course, $\bfX$ and $\bfY$ are independent \fif\ $R_\beta(\bfX,\bfY)=T_\beta(\bfX,\bfY)=0$.
\par 
Thanks to the choice of the weight \fct\ $|\bfs|^{-(q+\beta)}|\bft|^{-(r+\beta)}$
and \eqref{eq:may9},
$T_\beta(\bfX,\bfY)$ has an explicit form: assuming that $(\bfX_i,\bfY_i)$, $i=1,2,\ldots,$ are iid copies of $(\bfX,\bfY)$, we have
\beam\label{eq:may8}
T_\beta(\bfX,\bfY)&=& \E [|\bfX_1-\bfX_2|^\beta|\bfY_1-\bfY_2|^\beta]+  
\E [|\bfX_1-\bfX_2|^\beta]\E[|\bfY_1-\bfY_2|^\beta]\nonumber\nonumber\\&& -
2\,\E [|\bfX_1-\bfX_2|^\beta|\bfY_1-\bfY_3|^\beta]\,,
\eeam 
and $R_\beta(c\bfX,c\bfY)=R_\beta(\bfX,\bfY)$ for $c\in\bbr$, i.e., $R_\beta$
is scale-invariant. 

The definition already points at some 
nice properties of the distance covariance/correlation. It vanishes
if and only if $\bfX,\bfY$ are independent, hence one can capture  
arbitrary dependence structures such as non-linear and non-monotone onces. 
By a handy choice of the weight \fct\ as in \eqref{eq:may9} 
an explicit interpretation through  moments is 
available. It translates to the empirical moments (cf. \eqref{eq:may8b},\eqref{eq:may8c}) which can be easily calculated. Moreover, the empirical distance covariance 
is approximated by a (degenerate) $U$-statistic whose theoretical properties
have been studied in great detail in the literature. 
Due to the aforementioned advantages the empirical distance correlation
has found applications in a rather wide area where dependence modeling is crucial: time series  \cite{zhou:2012,dmmw:2018,fokianos:pitsillou:2018}, 
functional time series \cite{hlavka:huskova:meintanis:2021,meintanis:huskova:hlavka:2021}, continuous-time stochastic processes \cite{matsui:mikosch:samorodnitsky:2017,dehling:matsui:mikosch:samorodnitsky:tafakori:2019}.
Moreover, taking advantage of the arbitrariness of dimensions, 
applications in high-dimensional data have been studied intensively in recent years; see 
\cite{szekely:rizzo:2013,yao:zhang:shao:2018,Gao:Fan:Lv:Shao:2020,zhu:zhang:yao:shao:2020}. 
\par
There are numerous other applications of distance correlation 
in addition to those mentioned above. However, to the best of our knowledge,
applications  to continuous-parameter random fields (which have 
infinite dimension) have not been considered.
Some work has been conducted on independence tests for continuous-parameter  
random fields, for example  
\cite{huang:huang:tsay:pan:2021},  where a high-dimensional 
case is tractable, but  a dimension reduction approach was used. In this paper we consider an independence test based 
on distance correlation for random fields, exploiting  
the arbitrariness of dimensions and making use of techniques developed for 
tests on stochastic processes.
Our approach will be clarified in the following text.

\subsection{Distance covariance for 
random fields on a lattice in $[0,1]^d$}\label{subsec:latticecase}
Sz\'ekely et al. \cite{szekely:rizzo:2013} showed that distance 
correlation fails for high-dimensional vectors $\bfX,\bfY$ if
their components are independent. Therefore Matsui et al. \cite{matsui:mikosch:samorodnitsky:2017}
and  Dehling et al. \cite{dehling:matsui:mikosch:samorodnitsky:tafakori:2019}
required some dependence structure on the components. They
extended distance covariance and distance correlation to stochastic 
processes. In particular,  \cite{dehling:matsui:mikosch:samorodnitsky:tafakori:2019} studied discretizations of stochastic processes $X,Y$ on $[0,1]$.
Instead of using the vectors 
$(X(t_i))_{i=1,\ldots,p}$ and $(Y(t_i))_{i=1,\ldots,p}$ for 
partitions $t_0=0<t_1<\cdots<t_p=1$, $\Delta_i=(t_{i-1},t_i]$ \st\ $p=p_n\to\infty$ and 
$\delta_n=\max_i |\Delta_i|=\max_i (t_i-t_{i-1})\to 0$ as $\nto$, one introduces the 
weighted vectors
\beam\label{eq:discret}
\bfX^{(p)}= \big( \sqrt{t_i-t_{i-1}}\,X(t_i))_{i=1,\ldots,p}\quad \mbox{and}\quad 
\bfY^{(p)}= \big( \sqrt{t_i-t_{i-1}}\,Y(t_i))_{i=1,\ldots,p}.
\eeam
\par
 Here we introduce a direct analog of this approach to $[0,1]^d$
for $d>1$. 
We assume that any random field 
$Z=(Z(\bfu))_{\bfu\in[0,1]^d}$ of interest is observed on a 
lattice with constant mesh size. We start by partitioning 
$[0,1]$ into equidistant points 
$t_i=i/q$, $i=0,1,\ldots,q$, for some positive integer $q$. From them we construct a
lattice in  $[0,1]^d$ via the points
\beao
\bft_{\bfi}=(t_{i_1},\ldots,t_{i_d}),\qquad \bfi=(i_1,\ldots,i_{d})\in \{0,1,\ldots,q\}^d\,.
\eeao
We have $p:=q^d$ lattice points in $(0,1]^d$.
We discretize $Z$ on the cells $\Delta_{\bfi}= (\bft_{\bfi-\1},\bft_\bfi]$
with volume
\beam\label{partition}
 |\Delta|=|\Delta_{\bfi}|=p^{-1}\,.
\eeam
Consider the vector
\beao
 \bfZ_{p}= \big(|\Delta|^{1/2}
 Z(\bft_{\bfi})\big)\,,\qquad \bfi\in \Pi_d=\{1,\ldots,q\}^d\,,
\eeao
and a step-\fct\  approximation to  $Z$: 
\beam\label{eq:march5a}
  Z^{(p)}(\bft)= \sum_{\bfi\in \Pi_d} Z(\bft_\bfi) \1(\bft\in \Delta_\bfi),\qquad \bft\in[0,1]^d\,. 
\eeam
For a measurable bounded square-integrable field 
$Z$ on $[0,1]^d$ we have for a.e. sample path 
\beam\label{eq:may8a}
 |\bfZ_{p}|^2 &=& \dfrac 1 p\,
\sum_{\bfi\in\Pi_d} Z^2(\bft_\bfi)
 = \sum_{\bfi\in\Pi_d} Z^2(\bft_{\bfi})\, |\Delta_{\bfi}|
 = \| Z^{(p)}\|^2_2\nonumber\\ 
& \to& \int_{[0,1]^d} Z^2(\bft)\,d\bft
=\|Z\|_2^2\,,\qquad p\to \infty,  
\eeam
where $\|\cdot \|_2$ denotes the $L_2$-norm on $[0,1]^d$.
Motivated by \eqref{eq:may8} and this approximation, we define the distance covariance 
 $T_\beta(X,Y)$,  $\beta\in (0,2)$,
between
two random fields $X,Y$ on some bounded Borel set $B\subset \bbr^d$ 
of finite positive Lebesgue \ms\  
 by 
\beam\label{eq:dcovfield}
T_\beta(X,Y)&=& \E\big[
\|X_1-X_2\|_2^\beta\, \|Y_1-Y_2\|_2^\beta
\big] + \E\big[
\|X_1-X_2\|_2^\beta \big] \,\E\big[\|Y_1-Y_2\|_2^\beta
\big] \nonumber\\
& &-2\,\E\big[\|
X_1-X_2
\|_2^\beta \|Y_1-Y_3\|_2^\beta \big]\,,
\eeam
where $(X_i,Y_i)$, $i=1,2,\ldots,$ are iid copies of $(X,Y)$,
and the distance correlation $R_\beta(X,Y)$ is defined correspondingly.
(The $L^2$-norm $\|\cdot\|_2$ is to be understood on $B$.)
A further motivation is the following fact:
\ble\label{lem:ax}
Assume that $\beta\in(0,2)$, $X,Y$ are random fields on $B$
which are square-integrable, bounded, stochastically continuous and 
$\E[\|X\|_2^\beta+\|Y\|_2^\beta+\|X\|_2^\beta\|Y\|_2^\beta]<\infty.$ Then 
$T_\beta(X,Y)=0$ \fif\ $X,Y$ are independent.
\ele
For $\beta\le1$ the statement follows from Lyons \cite{lyons:2013},
who also addressed the more general problem of distance covariance in 
metric spaces, and for $\beta\in (0,2)$ 
a straightforward modification of the proofs in Dehling et al. \cite{dehling:matsui:mikosch:samorodnitsky:tafakori:2019} for random fields
on general Borel sets $B$
yields the result.
\par
Sample versions of $T_\beta$ and $R_\beta$ are given by
\beam
\qquad T_{n,\beta}(X,Y) &=& \frac{1}{n^2} \sum_{k,l=1}^n \|X_k-X_l\|_2^\beta \|Y_k-Y_l\|_2^\beta + \frac{1}{n^2} \sum_{k,l=1}^n \|X_k-X_l\|_2^\beta \frac{1}{n^2}
 \sum_{k,l=1}^n \|Y_k-Y_l\|_2^\beta \label{eq:may8b}\\
        &  &\qquad -2 \frac{1}{n^3} \sum_{k,l,m=1}^n \|X_k-X_l\|_2^\beta \|Y_k-Y_m\|_2^\beta\,,\nonumber \\
\qquad R_{n,\beta}(X,Y)&=&\dfrac{T_{n,\beta}(X,Y)}{\sqrt{
T_{n,\beta}(X,X) T_{n,\beta}(Y,Y)
}}\,.\label{eq:may8c}
\eeam
Since $T_{n,\beta}$ is a $V$-statistic, under suitable moment conditions
$T_{n,\beta}(X,Y)$ and  $R_{n,\beta}(X,Y)$
are consistent estimators of $T_\beta(X,Y)$ and $R_\beta(X,Y)$, respectively, 
and if $X,Y$ are independent, one also has
\beao
n\,R_{n,\beta}(X,Y)\std \sum_{i=1}^\infty \la_i\,(N_i^2-1) +{\rm const}, 
\eeao
for a square-summable real \seq\ $(\la_i)$, iid $N(0,1)$ \rv s $(N_i)$.
These \asy\ results 
in combination with the fact that $T_\beta(X,Y)=0$ \fif\ $X,Y$ are independent
encourage one to build a statistical test about independence of $X,Y$
on the quantities $T_{n,\beta}(X,Y)$. 
\par
Unfortunately, a sample of paths of $(X,Y)$ is rarely at our disposal, and so one has to think about discretizations
of $(X,Y)$. Motivated by the Riemann sum approximation \eqref{eq:may8a}
possible choices are the lattice discretizations 
$(X^{(p)},Y^{(p)})$ in \eqref{eq:march5a} and, for an iid sample
$(X_i,Y_i)$, $i=1,\ldots,n$, the sample version $T_{n,\beta}(X^{(p)},Y^{(p)})$
as replacement of the test statistic
$T_{n,\beta}(X,Y)$.
In agreement with 
\cite{dehling:matsui:mikosch:samorodnitsky:tafakori:2019} in the case 
$d=1$ we will show that this idea can be made to work for $d>1$ if 
$p=p_n\to\infty$.

\subsection{Distance covariance for random fields
at random locations}\label{subsec:randomlocation}
In this paper, we follow a second  path of research. 
\begin{itemize}
\item
We introduce distance covariance $T_\beta(X,Y)$  and $R_\beta(X,Y)$ for 
random fields $X,Y$ on some Borel set $B\subset \bbr^d$ of
positive Lebesgue \ms .
\item
We define $T_\beta(X^{(p)},Y^{(p)})$ and $R_\beta(X^{(p)},Y^{(p)})$ for non-lattice
based
discretizations $X^{(p)},Y^{(p)}$
of $X,Y$ on $B$. In contrast to \eqref{eq:discret}, we 
choose a random number $N_p$ of random locations $(\bfU_i)$ where the 
processes $X,Y$ are observed. Typically, these locations are uniformly 
distributed on $B$ and $N_p\stas \infty$ as 
$p$ increases with the sample size $n$ to infinity. 
\end{itemize}
The second idea has already been advocated in Matsui et al. \cite{matsui:mikosch:samorodnitsky:2017}. There it was 
assumed that $(\bfU_i)$ are the points of a Poisson process with 
constant intensity $p$.
For statistical purposes this means that one would have to observe  
a sample  of iid copies $(X_i^{(p)},Y_i^{(p)})$ of $(X^{(p)},Y^{(p)})$
at random locations that change across $i$ and their number would 
change as well. 
The \asy\ analysis of this setting is not very elegant and 
does not lead beyond consistency of $T_{n,\beta}(X^{(p)},Y^{(p)})$.
\par
To be precise, we consider bounded 
stochastically continuous square-integrable random fields
$X,Y$ on a bounded Borel set $B\subset \bbr^d$ of positive Lebesgue \ms .
We define the {\em distance covariance} $T_\beta(X,Y)$ 
between $X,Y$ as in \eqref{eq:dcovfield}
but the norm is now given by the (standardized) $L^2$-norm on $B$:
\beam\label{eq:norm2}
\|f\|_2= \Big(|B|^{-1}\int_B f^2(\bfu)\,d\bfu\Big)^{1/2}\,.
\eeam
For ease of notation,  we assume without loss of generality that the Lebesgue
\ms\ $|B|$ of $B$ is one. 
The {\em distance correlation} $R_\beta(X,Y)$  is defined correspondingly.
\par
Consider the \pp
\beam\label{eq:august17a} 
N^{(p)}(\cdot)=\sum_{i=1}^{N_p} \vep_{\bfU_i}(\cdot)= \#\{i\le N_p: \bfU_i\in\cdot\}\,,\qquad p>0\,,
\eeam 
on the state space $B$
where $(\bfU_i)$ is iid uniform on $B$ independent of the counting number 
$N_p\stas \infty$ as $p\to\infty$. If $N_p$ is Poisson distributed and $p$ is fixed then $N^{(p)}$ constitutes a homogeneous Poisson process on $B$
; see p. 132 in Resnick \cite{resnick:1987}. In view of the order statistics property of any  
homogeneous  Poisson process $N$ on $B$ we also know that the points of $N$,
conditionally on $N(B)$, are iid uniform on $B$. However, the 
\rep\   \eqref{eq:august17a} goes beyond the order statistics 
property since we require the point \seq\ $(\bfU_i)$ to be the same for 
all $p>0$.
\par
Recycling the notation $X^{(p)},Y^{(p)}$, the discretizations of $X,Y$ are now 
given by
\beao
X^{(p)}(\bfu) = \sum_{i=1}^{N_p} X(\bfu) \1 _{\{ {\bfU_i}\}}(\bfu)\quad
\text{and}\quad Y^{(p)}(\bfu) = \sum_{i=1}^{N_p} Y(\bfu) \1_{{\{\bfU_i\}}} 
(\bfu)\,,\qquad \bfu\in B\,,
\eeao
where $(N^{(p)})_{p>0}$ and $X,Y$ are independent. 
If $N_p=0$, $X^{(p)}=Y^{(p)}=0$ by convention.
Again recycling the symbol $\|\cdot\|_2$, we write 
\beam\label{eq:lln}
\|X^{(p)}-Y^{(p)}\|_2^2 := \dfrac 1 N_p\sum_{i:\bfU_i\in B} (X-Y)^2(\bfU_i)
&=& \dfrac 1 N_p\, \int_B (X-Y)^2\,d\,N^{(p)}\,,
\eeam
with the convention that the \rhs\ is zero if $N_p=0$. 
We notice that for a.e. realization of $(N^{(p)})_{p>0}$, 
$\bfU_1,\ldots,\bfU_{N_p}$ are  getting arbitrarily tight in $B$ 
since we assume 
$N_p\stas \infty$ as $p\to\infty$. 
If we assume that $X,Y$ are path-wise square-integrable on $B$ and we condition 
on $(N^{(p)})_{p>0}$ then for a.e. realization of $X,Y$
we have the Riemann sum approximation $\|X^{(p)}-Y^{(p)}\|_2^2 \to  \|X-Y\|_2^2$.
\par
In what follows, we define the sample versions $T_{n,\beta}(X,Y),R_{n,\beta}(X,Y)$ 
in the natural way by applying the $L^2$-norm \eqref{eq:norm2} on $B$. 
We assume that the iid \seq\ $((X_i,Y_i))$ is independent of 
$(N^{(p)})_{p>0}$. Then the random locations $(\bfU_i)$ are the same for 
any  sample size $n$, and $((X_i^{(p)},Y_i^{(p)}))$ are iid, conditional on 
$(N^{(p)})_{p>0}$.
Moreover, we define $T_{n,\beta}(X^{(p)},Y^{(p)}),R_{n,\beta}(X^{(p)},Y^{(p)})$ 
accordingly by using the notation in \eqref{eq:lln}. 
\par
The random discretizations of $X,Y$
have some advantages over the lattice case: 
\begin{itemize}
\item
they can be defined 
on quite general sets $B$,
\item
the random fields can be observed on 
irregularly spaced locations, 
\item
the smoothness of the field
does not play a significant role for the \asy\ theory of 
the sample distance covariance.
\end{itemize}

Of course, one needs preliminary confirmation about the Poisson property 
of the counting variable $N_p$ and the
uniformity of the locations $(\bfU_i)$. There exists a large literature 
on tests for Poisson point process models 
(see e.g. \cite{ripley:1979,moller:waagepetersen:2003,diggle:2013} and references therein). 
This research topic is still progressing, and tests on arbitrary 
multi-dimensional subsets of $\R^d$ or 
even on sets of unknown support have been developed e.g. in \cite{bcv:2006,bcv:2012,ebner:nestmann:schulte:2020}.

\subsection{Organization of the paper}
We provide the necessary \asy\ theory for the 
two aforementioned methods:
\begin{enumerate} 
\item[(i)] in Section~\ref{sec:randomlocation} 
for random locations and increasing intensity $p$ on a bounded 
Borel set $B\subset \bbr^d$ of positive Lebesgue measure,
\item[(ii)] in Section~\ref{sec:lattice}
for the lattice case on $B=[0,1]^d$ for increasing 
$p$ which is the total (deterministic) 
number of grid points.
\item[(iii)] in Section~\ref{sec:onlyrandomlocation} we combine the two methods by averaging
the observations in each cell of the regular lattice on  $B=[0,1]^d$. 
\end{enumerate}
Since the (non-Gaussian) limit \ds s of the considered discretized 
sample distance covariances are not tractable, in
Section~\ref{asymptotic} we consider some modifications of 
these quantities. Roughly speaking, we find approximating
degenerate $U$-statistics to the sample distance covariances and 
apply bootstrap techniques tailored for these $U$-statistics. We show the consistency of the bootstrap. 
In the case of random locations,
this part is quite delicate and rather different from the theory developed in Dehling et al. \cite{dehling:matsui:mikosch:samorodnitsky:tafakori:2019}
 in the lattice case for $d=1$. Indeed, we show bootstrap consistency for these U-statistics {\it conditional} on the random locations 
and their number.
In Section~\ref{sec:simulations} we illustrate 
how the \con\ of the sample distance correlation depends on the sample
size and the number of discretization points. We also 
show how the bootstrap performs for 
the sample distance correlation on selected discretized random 
fields, in particular for fractional Brownian and infinite variance stable 
\levy\ sheets. In Section~\ref{SecEmp}
we apply the sample distance correlation for testing the independence of Japanese meteorological data observed at 
stations all over Japan. We interpret their number as random and their location as uniformly distributed.

In the remaining sections we provide the proofs of the main 
results. 

\section{Random field at random locations}\label{sec:randomlocation}\setcounter{equation}{0}
\subsection{Technical conditions}
We use the notation and assumptions  of Section~\ref{subsec:randomlocation}.
In particular,
$X,Y$ are measurable bounded stochastically continuous 
random fields on a bounded Borel set $B\subset \bbr^d$ of positive Lebesgue \ms\
with Riemann square-integrable sample paths on $B$. Also recall that $p=p_n\to\infty$
and $N_p\stas \infty$ as $\nto$.
\par
In what follows, $c$ denotes any positive constant whose value is not of 
interest.
\par
The following result gives some \asy\ results for the sample 
distance covariance. They are the basis for proving Theorem~\ref{thm:main}.
\bpr\label{prop::asymptotics1}
Choose $\beta\in (0,2)$ and assume that $X,Y$ are independent.
Consider the following conditions:
\begin{enumerate} 
\item[\rm (1a)]
$X,Y$ have finite second moment and 
$\int_B \E\big[X^2(\bfu)+Y^2(\bfu)\big]\,d\bfu<\infty$.
\item[\rm (1b)] 
$X,Y$ have infinite second moment but
$\E\big[\sup_{\bfu \in B}|X|^\beta(\bfu)+ \sup_{\bfu \in B}|Y|^\beta(\bfu)\big]<\infty$.
\item[\rm (2a)] $X$ has finite second moment, $\big[\beta\in(0,1]$ and 
$\int_B \E[X^2(\bfu)]\,d\bfu<\infty\big]$ or 
$\big[\beta\in (1,2)$
and $\int_B\E[|X|^{2\beta}(\bfu)]\,d\bfu<\infty\big]$.
\item[\rm (2b)]  $X$ has infinite second moment, $\beta\in (0,1)$
and $\E[\sup_{\bfu \in B} |X|^{2\beta}(\bfu)]<\infty$.
\end{enumerate}
If {\rm (1a)} or {\rm (1b)} hold then
\beam\label{app1distacecov1}
 \E\big[\big|T_{n,\beta}(X^{(p)},Y^{(p)})-T_{n,\beta}(X,Y)\big|\;\big|\;N^{(p)}\big] 
\stas 0\,,\qquad \nto\,.
\eeam
If {\rm (2a)} or {\rm (2b)} hold then
\beam\label{eq:august19a}
\P\big(|T_{n,\beta}(X^{(p)},X^{(p)})-T_{n,\beta}(X,X)|>\vep \;\big|\;N^{(p)}\big)\stas 0\,,\qquad \nto\,. 
\eeam
\epr
The proof of this proposition is given in Section~\ref{sec:proofprop41}.
\subsection{Some examples}
We consider some examples of random fields and discuss the fulfillment 
of the conditions in Proposition~\ref{prop::asymptotics1}.
\bexam\label{exam:fbm} 
\rm Let $X(\bfu)=B^H(\bfu)$, $\bfu\in [0,1]^d$,
be a {\it fractional Brownian sheet} with Hurst parameter $H=(H_1,\ldots,H_d)\in
 (0,1)^d$ introduced by Kamont \cite{Kamont:1996}; see also Ayache and Xiao \cite{ayache:xiao:2005}. It is a centered 
Gaussian random field with continuous sample paths and
 covariance \fct\
\beam\label{eq:fbma}
 \cov(B^H(\bfs), B^H( \bft))=\prod_{i=1}^d \frac{1}{2}(
 |s_i|^{2H_i}+|t_i|^{2H_i}-|s_i-t_i|^{2H_i}),\qquad \bfs,\bft\in [0,1]^d\,.
\eeam
Moreover, for all $\gamma>0$, $\E[|B^H(\bfu)|^{\gamma}]$ is a continuous 
\fct\ of $\bfu$. Therefore all conditions on $X$ are satisfied.
\eexam
\bexam\label{exam:levybm}\rm 
Let $H\in (0,1)$ and $X^H$ be a centered Gaussian 
random field on $[0,1]^d$ with covariance \fct
\beao
\cov(X^H(\bfs), X^H( \bft))= \dfrac 1 2 \big(|\bfs|^{2H}+|\bft|^{2H}-
|\bfs-\bft|^{2H}\big)\,,\qquad \bfs,\bft\in [0,1]^d\,.
\eeao
This process is called {\em \levy\ fractional Brownian field}; see Samorodnitsky and Taqqu \cite{samorodnitsky:taqqu:1994}, p. 393.
It has stationary increments and continuous sample paths and moments 
of any order.
\eexam
\bexam\label{exam:levy}\rm
Let $X$ be a centered {\em  \levy\ sheet} on $[0,1]^d$ with
 L\'evy-Khintchine triplet $(\mu,\sigma^2,\nu)$; see Khoshnevisan and Xiao \cite{Khoshnevisan:xiao:2004}, Example 2.1. This means that, 
for disjoint intervals $A_i \in [0,1]^d$, $i=1,\ldots,k$, and $k\ge 1$,  
the increments  $X(A_1),\ldots,X(A_k)$ are independent and 
for each interval $A=(\bfs,\bft]$, the characteristic function of 
the increment 
\[
 X(A) = \sum_{k_1\in \{0,1\}}\cdots \sum_{k_d \in \{0,1\}} (-1)^{d-\sum_{l=1}^dk_l}X(s_1+k_1(t_1-s_1),\ldots,s_d+k_d(t_d-s_d)), 
\]
is given by 
\begin{align}
\label{chf:levysheet}
  \E[\ex^{i u X(A)}] = \exp(-|A|\,\Psi(u))\,,
\end{align}
where
\[
 \Psi(u) = i \mu u + u^2\sigma^2/2+ \int_\R (\ex^{iu x}-1- ux \1_{\{|x| \le 1\}}) \nu (dx).   
\]
If $X$ has finite $\gamma$th moment for some $ \gamma>0$
then $ \E[|X(\bfu)|^{\gamma}]$ is a continuous \fct\ of $\bfu$ and 
the \fct\  $\E[|\sup_{u\in [0,1]^d}X(\bfu)|^{\gamma}]$ is finite as well.
The field $X$ 
has \cadlag\ sample paths which are bounded and square-integrable on 
$ [0,1]^d$.
\eexam

\bexam\label{exam:stab}\rm 
Consider a symmetric {\it $\alpha$-stable \levy\ sheet} on $[0,1]^d$
for some $\alpha\in (0,2)$; see for example Ehm \cite{ehm:1981}, Samorodnitsky and Taqqu \cite{samorodnitsky:taqqu:1994}. 
This field has infinite variance 
and is a \levy\ random sheet whose increments have a symmetric
$\alpha$-stable \ds , i.e. the \chf\ of $X(A)$ is given 
by \eqref{chf:levysheet} with $\Psi(u)=c^\alpha |u|^\alpha$ for some $c>0$.  
It has moments of order $\gamma\in (0,\alpha)$. 
\eexam
\subsection{Main result}
The following statement is the main result  in the case of random locations.
\bth\label{thm:main}
Choose $\beta\in (0,2)$ and assume that $X,Y$ are independent, 
\begin{enumerate}
\item[\rm 1.] If {\rm (1a)} or {\rm (1b)} of Proposition~\ref{prop::asymptotics1} hold then for all $\vep>0$ along a.e. sample path of $(N^{(p)})_{p>0}$, 
as $\nto$
and $p=p_n\to\infty$,
\beao
\P\big(\big|T_{n,\beta}(X^{(p)},Y^{(p)})\big|>\vep\mid N^{(p)}\big)\to 0\,.
\eeao
\item[\rm 2.] If $\big[${\rm (1a)} and {\rm (2a)} both for $X,Y\big]$ or 
$\big[${\rm (1b)} and {\rm (2b)} both for $X,Y\big]$ of Proposition~\ref{prop::asymptotics1} hold then we also have for all $\vep>0$
along a.e. sample path of $(N^{(p)})_{p>0}$, as $\nto$
and $p=p_n\to\infty$,
\beao
\P\big(\big|R_{n,\beta}(X^{(p)},Y^{(p)})\big|>\vep\mid N^{(p)}\big)\to 0\,.
\eeao
\end{enumerate}
\ethe
\begin{proof} {\bf 1.} From Proposition~\ref{prop::asymptotics1}, 
in particular from \eqref{app1distacecov1}, we conclude
that $T_{n,\beta}(X^{(p)},Y^{(p)})-T_{n,\beta}(X,Y)\to 0$ as $\nto$ in \pro y
conditional on $N^{(p)}$.
The rest is analogous
to the derivations in Dehling et al. \cite{dehling:matsui:mikosch:samorodnitsky:tafakori:2019}: $T_{n,\beta}(X,Y)$
is a $V$-statistic and satisfies the \slln\ $T_{n,\beta}(X,Y)\stas T_\beta(X,Y)=0$
under the required moment conditions on $X,Y$.\\ 
{\bf 2.} 
 Under the additional conditions (2a), (2b) both for $X,Y$ we 
also have $T_{n,\beta}(X^{(p)},X^{(p)})-
T_{n,\beta}(X,X)\to 0$ and $T_{n,\beta}(Y^{(p)},Y^{(p)})-
T_{n,\beta}(Y,Y)\to 0$ as $\nto$ in \pro y conditional on $N^{(p)}$.
Moreover, by the \slln\ 
$T_{n,\beta}(X,X)\stas T_\beta(X,X)$ and $T_{n,\beta}(Y,Y)\stas T_\beta(Y,Y)$, 
hence $R_{n,\beta}(X,Y)\to R_\beta(X,Y)=0$ in \pro y conditional on $N^{(p)}$.
\end{proof}
In Section~\ref{asymptotic} we provide much stronger \asy\ results
 under stronger conditions. In particular,  we show that $n\,T_{n,\beta}(X^{(p)},Y^{(p)})$ conditional on $(N^{(p)})_{p>0}$ 
has the same weak limit
as $n\,T_{n,\beta}(X^{(p)},Y^{(p)})$, and we also show consistency of a suitable
bootstrap procedure.

\section{Random field at a lattice}\label{sec:lattice}\setcounter{equation}{0}
In the present and next sections we consider sampling schemes 
of $(X,Y)$ on a  deterministic lattice on $B=[0,1]^d$. We will assume conditions on the smoothness of $(X,Y)$
from which we can derive convergence rates of $T_{n,\beta}(X^{(p)},Y^{(p)})$ 
to $T_{n,\beta}(X,Y)$.
Here we closely follow Dehling et al. \cite{dehling:matsui:mikosch:samorodnitsky:tafakori:2019} who dealt with the lattice case for $d=1$.
The following conditions and results are adaptations of those in \cite{dehling:matsui:mikosch:samorodnitsky:tafakori:2019}. 

\subsection{Technical conditions}
We use the notation and assumptions  of Section~\ref{subsec:latticecase}.
In what follows, we introduce moment and smoothness conditions on the fields $X,Y$ and require certain rates for $p=p_n\to\infty$ as $n\to\infty$. 
The conditions are separated in two parts depending on whether  $(X,Y)$
have finite second moments or not.
\par
If $X,Y$ have finite second moments we will work under the following 
conditions.\\[2mm]
{\bf Condition (A): finite second moments}
\begin{enumerate}
\item[\rm (A1)] {\em Smoothness of increments.} There exist
$\gamma_X,\gamma_Y>0$ and $c>0$ \st\ 
\beao
\var\big(X(\bft)-X(\bfs)\big)\le c\,|\bft-\bfs|^{\gamma_X} 
\qquad\mbox{and}\qquad \var\big(Y(\bft)- Y(\bfs)\big)\le
	     c\,|\bft-\bfs|^{\gamma_Y}\,.
\eeao

\item[\rm (A2)] {\em Additional moment conditions.}
If $\beta\in (1,2)$ 
we have
\beao
&&\max_{\bft \in [0,1]^d} \E[|X( \bft )|^{2(2\beta-1)}]+\max_{\bft \in
	     [0,1]^d} \E[|Y( \bft )|^{2(2\beta-1)}]<\infty\,.
\eeao
\item[\rm (A3)] {\em Growth condition on $p=p_n\to\infty$.} We have
\beao
p^{-1}n^{2/\big( d^{-1}(\gamma_X\wedge\gamma_Y)\,(\beta\wedge 1)\big)} \to 0\,,\qquad\nto\,.
\eeao
\end{enumerate}
If $X,Y$ possibly have infinite 
second moments we will work under the following conditions:\\[2mm]
{\bf Condition (B): infinite second moments} Assume $\beta\in (0,2)$.
\begin{enumerate}
\item[\rm (B1)] {\em Finite $\beta$th moment.}
\beao
\E\big[\max_{\bft \in (0,1]^d} |X(\bft)|^{\beta}\big]<\infty\;\;\mbox{and}\;\; 
\E\big[\max_{\bft \in (0,1]^d} |Y(\bft)|^{\beta}\big]<\infty\,. 
\eeao
\item[\rm (B2)]
{\em Smoothness of increments.}
There exist $\gamma_X,\gamma_Y>0$ and $c>0$ \st\  
\beao
\max_{\bfi\in\Pi_d}\E\big[\max_{ \bft \in \Delta_{\bfi}}|
	     X(\bft)-X(\bft_\bfi)|^{\beta}\big]\le c\, 
p^{-\gamma_X/d}\ 
\mbox{and}\ 
\max_{\bfi\in \Pi_d}\E\big[\max_{t\in \Delta_\bfi}|
	     Y(\bft)-Y(\bft_\bfi)|^{\beta}\big]\le c\, 
p^{-\gamma_Y/d}
\,.
\eeao
\item[\rm (B3)]  {\em Additional moment and smoothness conditions.}
\beao
\E\big[\max_{\bft \in (0,1]^d} |X(\bft)|^{2\beta}\big]<\infty\;\;\mbox{and}\;\; 
\E\big[\max_{\bft \in (0,1]^d} |Y(\bft)|^{2\beta}\big]<\infty\,. 
\eeao
If $\beta\in (0,1)$ there also exist $\gamma_X',\gamma_Y'>0$ and $c>0$ \st\
\beao
\max_{\bfi\in\Pi_d}\E\big[\max_{\bft\in \Delta_\bfi}|
	     X(\bft)-X(\bft_\bfi)|^{2\beta}\big]\le c\, p^{-\gamma_X'/d}
	     \mbox{ and }\ 
\max_{\bfi\in\Pi_d}\E\big[\max_{\bft\in \Delta_\bfi}|
	     Y(\bft)-Y(\bft_\bfi)|^{2\beta}\big]\le c\, p^{-\gamma_Y'/d}\,.
\eeao
\item[\rm (B4)] {\em Growth condition on $p=p_n\to\infty$.} We have $\beta/2+
(\gamma_X\wedge\gamma_Y)/d>1$ and 
\beao
p^{-1}n^{(\beta/(\beta\wedge 1))\big/(\beta/2+(\gamma_X\wedge\gamma_Y)/d-1)}\to 0\,,\quad\nto\,.
\eeao
\end{enumerate}
In the lattice case the following result is the analog of Proposition~\ref{prop::asymptotics1}.
\bpr\label{prop:1}
Assume the following conditions.
\begin{enumerate}
\item [\rm (1)]
$X,Y$ are independent stochastically continuous bounded processes on $[0,1]^d$ defined
on the same \pro y space. 
\item[\rm (2)]
If  $X,Y$ have finite expectations, then these are assumed to be equal
to $0$.
\item[\rm (3)] $\beta\in (0,2)$.
\item[\rm (4)] $p=p_n\to\infty$ as $\nto$.
\end{enumerate} 
Then the  following statements hold.
\begin{enumerate}
\item[\rm 1.]
If also {\rm  (A1)} holds then there is a constant $c$ \st\ for any  $n\ge 1$, 
 \beam\label{eq:may10a}
 \E\big[ |T_{n,\beta}(X^{(p)},Y^{(p)})-T_{n,\beta} (X,Y) |\big] \le c\,
     p^{-d^{-1}(\gamma_X\wedge \gamma_Y) \,(\beta\wedge 1)/2}\,. 
\eeam
\item[\rm 2.]
If also {\rm (B1),(B2)} hold then there is a constant $c$ \st\ for any  $n\ge 1$, 
\beam\label{eq:may10b}
\E\big[ |T_{n,\beta}(X^{(p)},Y^{(p)})-T_{n,\beta} (X,Y) |\big] \le c\, \big(
p^{1-(\beta/2+(\gamma_X\wedge \gamma_Y)/d)}\big)^{(\beta\wedge 1)/\beta}\,.
\eeam
\item[\rm 3.] If also {\rm (A1), (A2)} hold  
in the finite variance case and $\big[\mbox{$\beta\in
    (0,1)$, {\rm (B3)}}\ and\  1<\beta+(\gamma_X' \wedge
	     \gamma_Y')/d \big]$ in the infinite variance
	     case, then 
\beao
T_{n,\beta}(X^{(p)},X^{(p)})-T_{n,\beta}(X,X) \stp 0\,,\qquad \nto\,,
\eeao
and the analogous result holds for $Y$. 
\end{enumerate}
\epr
The proof of this result is completely analogous to the case $d=1$ given in
\cite{dehling:matsui:mikosch:samorodnitsky:tafakori:2019} and therefore omitted.
\par
A comparison of Propositions~\ref{prop::asymptotics1} and   \ref{prop:1} shows 
that in the former result one does not need conditions on the smoothness
of the sample paths such as (A1), (B2) or (B3). In the latter result, the 
smoothness parameters $\gamma_X,\gamma_Y$ appear explicitly in the 
approximation rates in \eqref{eq:may10a} and \eqref{eq:may10b}.
For example, consider two iid Brownian sheets $X,Y$. According to Example~\ref{exam:BH} below, 
$\gamma_X=\gamma_Y= 1$ since $H_i=0.5$. Then the \rhs\ of \eqref{eq:may10a}
turn into $c\, p^{-(\beta\wedge 1)/(2d)}$. In Proposition~\ref{prop::asymptotics1}
the parameter $p$ has a distinct meaning. Assume that $(N_p)_{p>0}$ are Poisson
variables, e.g. $N_p=M(p\,B)$ for a unit rate homogeneous Poisson process 
$M$ on $\bbr^d$. When restricted to $B$, $M(p\,\cdot)$ has intensity $p|B|$ 
and we have $N_p/p\stp |B|$ and  therefore $p|B|$ 
is a rough approximation of $N_p$.
We observe that \eqref{eq:may10a} is the analog of 
\eqref{app1distacecov1}. In the latter case, we do not have a \con\ rate.
This is perhaps not surprising since
knowledge of $X,Y$ on the grid points of a lattice in combination 
with the smoothness of the sample paths of $X,Y$ often give us additional
information about the sample path inside a cell $\Delta_\bfi$.
In Section~\ref{sec:onlyrandomlocation} 
we will moderate between
the two sampling schemes: we will average the observations at random locations
in each cell of the lattice. This property allows one to use smoothness
properties of the random field also for observations at random locations.
\par
Thus, if the observations are given on a lattice it is preferable to
use distance covariance on it. However, the random location case has the 
advantage that smoothness does not matter and it works on quite general
bounded sets $B$.

\bexam\label{exam:BH}\rm Let $X(\bfu)=B^H(\bfu)$, $\bfu\in [0,1]^d$,
be a fractional Brownian sheet with Hurst parameter $H=(H_1,\ldots,H_d)\in
 (0,1)^d$; see Example~\ref{exam:fbm}. From 
\eqref{eq:fbma} we have 
\begin{align*}
& \var(B^H(\bft)-B^H(\bfs)) \\
&= \Big(\prod_{i=1}^d |t_i|^{2H_i} -\prod_{i=1}^d 
0.5 (|t_i|^{2H_i}+|s_i|^{2H_i}-|t_i-s_i|^{2H_i})\Big)\\
&\quad + \Big(\prod_{i=1}^d |s_i|^{2H_i} -\prod_{i=1}^d 
0.5 (|t_i|^{2H_i}+|s_i|^{2H_i}-|t_i-s_i|^{2H_i})\Big)\\
&= \sum_{i=1}^d \Big(\prod_{j=1}^{i-1} |t_j|^{2H_j}\,\dfrac 1 2 \big(|t_i|^{2H_i}-|s_i|^{2H_i} + |t_i-s_i|^{2H_i}\big) \\
&\quad + \prod_{j=1}^{i-1} |s_j|^{2H_j}\,\dfrac 1 2 \big(|s_i|^{2H_i}-|t_i|^{2H_i} + |t_i-s_i|^{2H_i}\big)\Big)
\prod_{k=i+1}^d \frac{1}{2} (|t_k|^{2H_k}+|s_k|^{2H_k}-|t_k-s_k|^{2H_k}).
\end{align*} 
Thus we have for some constant $c>0$,
\[
 \var(B^H(\bft)-B^H(\bfs)) \le c \sum_{i=1}^d|t_i-s_i|^{(2H_i) \wedge 1} \le c |\bft-\bfs|^{(2\min_i H_i)\wedge 1}. 
\] 
Hence (A1) is satisfied with 
$\gamma_X= (2\min_{i=1,\ldots,d} H_i)\wedge 1$. Moreover, for any $\gamma>0$, 
$\E[|B^H(\bfu)|^\gamma]$ is continuous. Hence (A2) holds.
\eexam
\bexam\rm Let $X^H(\bfu)$, $\bfu\in[0,1]^d$, for some $H\in (0,1)$ 
be a \levy\ fractional
Brownian field; see Example~\ref{exam:levybm}. It has all moments and 
stationary increments. Hence
\beao
\var(X^H(\bft)-X^H(\bfs))= |\bft-\bfs|^{2H}\,.
\eeao
Hence (A1) is satisfied with $\gamma_X=2H$. Moreover, for any $\gamma>0$, 
$\E[|X^H(\bfu)|^\gamma]$ is continuous. Hence (A2) holds.
\eexam
\bexam\label{exam:levyb}\rm 
Let $X$ be a \levy\ sheet on $[0,1]^d$; see Example~\ref{exam:levy}. 
Recall that $X(\bft)=X([\bf0,\bft])$. Therefore 
\begin{align}
\label{increment:levy}
 X(\bft)-X(\bfs) = X([{\bf0},\bft]\setminus ([{\bf0},\bfs]\cap [{\bf0},\bft]))- X([\bf0,\bfs]\setminus ([\bf0,\bfs]\cap [\bf0,\bft])), 
\end{align}
where the two random variables on the \rhs\ are defined on disjoint sets, hence they are independent.
We observe that the set $\big([\bf0,\bfs]\cup  [\bf0,\bft]\big)\backslash
\big([\bf0,\bfs]\cap  [\bf0,\bft]\big)$ for $\bfs,\bft\in [0,1]^d$ 
is the union of a finite number (only depending on $d$) 
of disjoint sets $(\bfa,\bfb]$ \st\ for some $1\le j\le d$ and a 
constant $c>0$, $|(\bfa,\bfb]\big|\le c\,|t_j-s_j|$. Thus,
if $X$ has finite second moment, we have for some constants $c_1,c_2$,
\beao
\var(X(\bft)-X(\bfs))&\le & c_1 \max_{i=1,\ldots,d}|t_i-s_i| \le c_2 |\bft-\bfs|\,.
\eeao
Hence (A1) holds with $\gamma_X=1$. 
If the moment condition $\E[|X( \bfu )|^{2(2\beta-1)}]<\infty$ holds for some $\bfu$, 
it is finite for all $\bfu$ and also continuous. Hence
(A2) holds for $X$.
\eexam
\bexam\rm
Consider a symmetric $\alpha$-stable \levy\ sheet on $[0,1]^d$
for some $\alpha\in (0,2)$; see Example~\ref{exam:stab}. 
It has moments of order $\beta\in (0,\alpha)$. 
Since $X$ is symmetric and has independent increments an application of
Levy's maximal inequality yields
\beao
\P\Big(\max_{\bft \in \Delta_{\bfi}}|X(\bft)-X(\bft_{\bfi})|>x\Big)\le 2\,
\P\big(|X(\bft_{\bfi})-X(\bft_{\bfi-\bf1})|>x\big)\,,\qquad x>0\,.
\eeao
By definition of a symmetric  $\alpha$-stable sheet
\beao
\E\big[|X(\bft_{\bfi})-X(\bft_{\bfi-\bf1})|^\beta\big]
&=& c\,\big|[{\bf0},\bft_{\bfi}]\backslash [{\bf0},\bft_{\bfi-\bf1}]\big|^{\beta/\alpha}
\le c\,p^{-\beta/(\alpha\,d)}\,.
\eeao
Thus (B2) holds with $\gamma_X=\beta/\alpha$.
The same
argument yields (B3) 
with $\gamma_X'=2\beta/\alpha $
for $\beta\in (0,0.5\alpha)$.
\eexam
\subsection{Main result in the lattice case}
Now we proceed with the main theorem. In the case $d=1$ it  
corresponds to Theorem 3.1 in Dehling et al. \cite{dehling:matsui:mikosch:samorodnitsky:tafakori:2019}.
\bth\label{thm:1} Assume the following conditions:
\begin{enumerate}
\item [\rm (1)]
$X,Y$ are independent stochastically continuous bounded processes on $[0,1]^d$. 
\item[\rm (2)]
If  $X,Y$ have finite expectations, then they are centered.
\item[\rm (3)] $\beta\in (0,2)$.
\item[\rm (4)] $p=p_n\to\infty$ as $\nto$.
\end{enumerate} 
Then the  following statements hold.
\begin{enumerate}
\item[\rm 1.]
If either {\rm (A1)}  or $\big[\mbox{{\rm (B1),(B2)} and 
$1<\beta/2+ (\gamma_X\wedge \gamma_Y)/d$} \big]$ are
satisfied then 
\[
 T_{n,\beta}(X^{(p)},Y^{(p)}) \stp 0
\]
holds. 
\item[\rm 2.] If either {\rm (A1),(A3)} or {\rm (B1),(B2),(B4)} hold
then 
\beam\label{eq:may8fa}
n\,T_{n,\beta}(X^{(p)},Y^{(p)}) \std \sum_{i=1}^\infty \la_i (N_i^2-1)+ c,  
\eeam
for an iid \seq\ of standard normal \rv s $(N_i)$, a constant $c$, 
and a square summable
\seq\ $(\la_i)$. 
\item[\rm 3.] If either {\rm (A1),(A2)} or $\big[\mbox{$\beta\in (0,1)$, {\rm (B1)-(B3)}}\ and\ 1< \beta+
(\gamma_X'\wedge \gamma_Y')/d \big]$ hold then 
\beao
R_{n,\beta}(X^{(p)},Y^{(p)})\stp 0\,.
\eeao
\item[\rm 4.] If either {\rm (A1)-(A3)} or $\big[\mbox{$\beta\in
    (0,1)$, {\rm (B1)-(B4)}}\ and\ 1<\beta+(\gamma_X' \wedge
	     \gamma_Y')/d \big]$ hold then 
$n\,R_{n,\beta}(X^{(p)},Y^{(p)})$ converges to a scaled version of the limit
in \eqref{eq:may8fa}.
\end{enumerate}

\ethe
\begin{proof}
{\bf 1.}
Under the assumptions, 
Proposition \ref{prop:1}, yields that 
$T_{n,\beta}(X,Y)-T_{n,\beta}(X^{(p)},Y^{(p)})$\\$\stp 0$ while 
$T_{n,\beta}(X,Y)\stas T_\beta(X,Y)=0$ by the \slln\ for $V$-statistics.
The statement follows.\\
{\bf 2.} 
 Under the additional conditions, 
 where $1<\beta/2+ (\gamma_X\wedge \gamma_Y)/d$ is implied by $\rm (B4)$, we
 have from Proposition \ref{prop:1}, that  
 $n|T_{n,\beta}(X,Y)-T_{n,\beta}(X^{(p)},Y^{(p)})|
 \stp {0}$. By degeneracy of the $V$-statistics
 $nT_{n,\beta}(X,Y)$ converges in distribution to a sum of independent weighted
 $\chi^2$ random variables, so does $nT_{n,\beta}(X^{(p)},Y^{(p)})$.\\
{\bf 3. {\rm and} 4.} 
 They follow by combining (1) and (2) with Proposition \ref{prop:1},
which allows one to switch from $T_{n,\beta}$ to $R_{n,\beta}$.
We omit further  details. 
\end{proof}

\section{Random field at random locations grouped in lattice cells}\label{sec:onlyrandomlocation}\setcounter{equation}{0}
In this section we consider a combination of the two previous sampling schemes for 
$(X,Y)$. For the sake of argument we restrict ourselves to fields on
$B=[0,1]^d$. We assume that the fields are observed at the locations
$(\bfU_i)_{i=1,\ldots,p}$ which are iid uniformly distributed on $B$ and 
$p=p_n\to\infty$ is a deterministic integer \seq . 
This time we average the randomly scattered $\bfU_i$ in each cell of a regular lattice grid. 
\par
Similarly to Section \ref{subsec:latticecase} 
we partition $B$ into $\tilde p=[p/\log p]$ disjoint cells $\Delta_i,i=1,\ldots,\tilde p$ with 
side length $\tilde p^{-1/d}$ and volume $\tilde p^{-1}$. 
The main idea of this approach is to average the observations 
$(X(\bfU_j),Y(\bfU_j))$ in a given cell. Since $p$ increases slightly 
faster than the number of cells $\tilde p$ the probability that 
there is no observation in a given cell decreases at a certain rate.
\par
For any random field $Z$ on $B$ we consider the discretization (again abusing notation),
\beam\label{eq:dsicret}
 Z^{(p)}(\bfu)
= \mbox{$\sum'$}\1_{\Delta_j}(\bfu)
 \ov Z_j, 
\eeam
where $\sum'$ denotes summation over those $j=1,\ldots,\tilde p$
\st\ $\#\Delta_j=\#\{i\le p:\bfU_i \in \Delta_j\}\ne 0$ and 
\beao
\ov Z_j=\sum_{k:\bfU_k\in \Delta_j}
\dfrac{Z(\bfU_k)}{ \# \Delta_j }\,.
\eeao
We also recycle the notation
\beam\label{def:normapp2}
 \|Z^{(p)}\|_2^2 &:=& 
\int_{[0,1]^d}
\mbox{$\sum'$} \1_{\Delta_j}(\bfu)\, \ov Z_j^2 \,d\bfu 
= {\tilde p}^{-1}\mbox{$\sum'$} \ov Z_j^2\, . 
\eeam
\subsection{Technical conditions}
The results in this section parallel those in the lattice case \ref{sec:lattice}. 
The conditions are similar to {\bf (A)} and {\bf (B)}; we also 
use the notation from  {\bf (A)} and {\bf (B)} in the sequel.
\par
If $X,Y$ have finite second moments we will need the following 
condition.
\begin{enumerate}
\item[\rm (A3')] {\em Growth condition on $p=p_n\to\infty$.} 
\beao
(p^{-1}\vee \tilde p^{-(\gamma_X \wedge \gamma_Y)/d} )n^{2/(\beta\wedge 1)} \to 0\,,\qquad\nto\,.
\eeao
\end{enumerate}
If $X,Y$ possibly have infinite 
second moments we will need the following condition.\\ Assume $\beta\in (0,2)$\
and
\begin{enumerate}
\item[\rm (B4')] {\em Growth condition on $p=p_n\to\infty$.} 
We have 
\beao
(p^{-1}\vee \tilde p^{-(\gamma_X \wedge \gamma_Y)/d}) p^{1-\beta/2} n^{\beta/(1\wedge \beta)}\to 0\,,\quad\nto\,.
\eeao
\end{enumerate}
If we remove $p^{-1}$ in (A3') and (B4') and 
replace $\tilde p$ by $p$ then we 
recover {\rm (A3)} and {\rm (B4)}. 
\par
The following result is an analog of Proposition 2.1 
for the discretizations $X^{(p)},Y^{(p)}$ of $X,Y$  defined via \eqref{eq:dsicret}.
In contrast to the latter case, we have explicit rates of \con\
for $\E\big[|T_{n,\beta}(X^{(p)},Y^{(p)})-T_{n,\beta} (X,Y) |\big]\to 0$.
These rates are achieved due to the averaging of values in the cells
$\Delta_j$. Then one can also exploit the smoothness of the field over
these small cells.
\bpr\label{prop:2}
Assume the following conditions.
\begin{enumerate}
\item [\rm (1)]
$X,Y$ are independent stochastically continuous bounded processes on $[0,1]^d$ defined
on the same \pro y space. 
\item[\rm (2)]
If  $X,Y$ have finite expectations, then these are assumed to be equal
to $0$.
\item[\rm (3)] $\beta\in (0,2)$.
\end{enumerate} 
Then the  following statements hold.
\begin{enumerate}
\item[\rm 1.]
If also {\rm  (A1)} holds then there is a constant $c$ \st\ for all  $n\ge 1$, 
 \beao
 \E\big[ |T_{n,\beta}(X^{(p)},Y^{(p)})-T_{n,\beta} (X,Y) |\big] \le c\,
    \big( \tilde p^{-(\gamma_X\wedge \gamma_Y)/d}+ p^{-1} \big)^{(\beta\wedge 1)/2}\,. 
\eeao
\item[\rm 2.]
If also {\rm (B1),(B2)} hold then there is a constant $c$ \st\ for all  $n\ge 1$, 
\beao
\E\big[ |T_{n,\beta}(X^{(p)},Y^{(p)})-T_{n,\beta} (X,Y) |\big] \le c\, \big(
p^{1-\beta/2} ( \tilde p^{-(\gamma_X\wedge \gamma_Y)/d} +p^{-1}) \big)^{(\beta\wedge 1)/\beta}\,.
\eeao
\item[\rm 3.] Under the additional conditions {\rm (A1), (A2)} in the finite
	     variance case and under $\big[\beta\in (0,1)$, 
{\rm (B1)-(B3)} and $1<\beta+\gamma_X'/d$
	     $\big]$ in the infinite variance
	     case, 
\beao
T_{n,\beta}(X^{(p)},X^{(p)})-T_{n,\beta}(X,X) \stp 0\,,\qquad \nto\,.
\eeao

\end{enumerate}
\epr
The proof of Proposition \ref{prop:2} is rather technical and given in 
Section \ref{App:B}.
\subsection{Main result}
The following is the main result of this section.

\bth\label{thm:random:location} Assume the following conditions:
\begin{enumerate}
\item [\rm 1.]
$X,Y$ are independent stochastically continuous bounded processes on $[0,1]^d$. 
\item[\rm 2.]
If  $X,Y$ have finite expectations, then they are centered.
\item[\rm 3.] $\beta\in (0,2)$ and $p_n\to \infty$ as $\nto$.
\end{enumerate} 
Then the  following statements hold.
\begin{enumerate}
\item[\rm (1)]
If either {\rm (A1)}  or $\big[\mbox{{\rm (B1),(B2)} and 
$1<\beta/2+(\gamma_X\wedge \gamma_Y)/d$}\big]$ are
satisfied then 
\[
 T_{n,\beta}(X^{(p)},Y^{(p)}) \stp 0
\]
hold. 
\item[\rm (2)] If either {\rm (A1),(A3')} or {\rm (B1),(B2),(B4')} hold
then 
\beao\label{eq:nov26}
n\,T_{n,\beta}(X^{(p)},Y^{(p)}) \std \sum_{i=1}^\infty \la_i (N_i^2-1)+ c 
\eeao
for an iid \seq\ of standard normal \rv s $(N_i)$, a constant $c$, 
and a square summable
\seq\ $(\la_i)$. 
\item[\rm (3)] If either {\rm (A1),(A2)} or $\big[\mbox{$\beta\in (0,1)$, {\rm (B1)-(B3)}}\ and\ 1< \min(\beta+(\gamma_X'
	     \wedge \gamma_Y')/d,\beta/2+(\gamma_X\wedge\gamma_Y)/d)
\big]$ hold then 
\beao
R_{n,\beta}(X^{(p)},Y^{(p)})\stp 0\,.
\eeao
\item[\rm (4)] If either {\rm (A1),(A2),(A3')} or $\big[\mbox{$\beta\in
    (0,1)$, {\rm (B1)-(B3),(B4')}}\ and\ 1<\beta+ (d^{-1}(\gamma_X' \wedge
	     \gamma_Y'))\wedge 1)\big]$ hold then 
$n\,R_{n,\beta}(X^{(p)},Y^{(p)})$ converges to a scaled version of the limit
in \eqref{eq:nov26}.
\end{enumerate}
\ethe
One can follow the lines of the proof of Theorem \ref{thm:1}.

\section{The bootstrap for $T_{n,\beta}$}\label{asymptotic}\setcounter{equation}{0}
In this section we introduce a bootstrap procedure for $T_{n,\beta}(X,Y)$
and $T_{n,\beta}(X^{(p)},Y^{(p)})$.
\subsection{$T_{n,\beta}(X,Y)$ as a degenerate $V$-statistic}\label{subsec:appa}
We recall some facts from the Appendix in Dehling et al. \cite{dehling:matsui:mikosch:samorodnitsky:tafakori:2019} (see also Lyons \cite{lyons:2013}) and adapt them to the situation of a random field
on $B$.
We assume that $Z_i=(X_i,Y_i)$, $i=1,2,\ldots,$ is an iid \seq\ with generic element $Z=(X,Y)$ whose components are bounded
Riemann square-integrable random fields on $B$, and 
$\E[\|X\|_2^\beta+\|Y\|_2^\beta + \|X\|_2^\beta\|Y\|_2^\beta]<\infty$ for some $\beta\in (0,2)$.  Under these assumptions, $T_{n,\beta}(X,Y)$ 
has \rep\ as a $V$-statistic of order 4 with symmetric degenerate kernel 
of order 1.
\par
We start with the kernel
\begin{align*}
& f((x_1,y_1),(x_2,y_2),(x_3,y_3),(x_4,y_4))\ (=: f(z_1,z_2,z_3,z_4)) \\
& \quad = \|x_1-x_2\|_2^\beta\|y_1-y_2\|_2^\beta 
  +\|x_1-x_2 \|_2^\beta \|y_3-y_4 \|_2^\beta  -2\|x_1-x_2\|_2^\beta \|y_1-y_3\|_2^\beta.
\end{align*}
From this representation, 
\[
  T_{n,\beta}(X,Y) =\frac{1}{n^4} \sum_{1\leq i,j,k,l \leq n} f(Z_i,Z_j,Z_k,Z_l). 
\]
Then one can define the corresponding symmetric kernel via 
the usual symmetrization as
\begin{equation} \label{e:h4}
 h(z_1,z_2,z_3,z_4)=\frac{1}{24} \sum_{ (l_1,l_2,l_3,l_4) \mbox{ permutation of  }(1,2,3,4) }
 f(z_{l_1},z_{l_2},z_{l_3},z_{l_4}).
\end{equation}
The kernel $h$ is at least $1$-degenerate: under the null hypothesis of
  independence between $X$ and~$Y$,
\begin{align*}
&  \E [f(z_1,Z_2,Z_3,Z_4)] + \E [f(Z_2,z_1,Z_3,Z_4)] +  \E
 [f(Z_2,Z_3,z_1,Z_4)] \\
&\qquad + 
\E [f(Z_2,Z_3,Z_4,z_1)]=0\,. 
\end{align*}
Still under the null  hypothesis of   independence between $X$ and $Y$, 
\beam
\label{condiexp_h_2}
\lefteqn{\E [h(z_1,z_2,(X_3,Y_3),(X_4,Y_4))]}\nonumber\\
 &=& \frac{1}{6} \big( \| x_1-x_2\|_2^\beta + \E[ \|
X_1-X_2\|_2^\beta] - \E [\| x_1-X\|_2^\beta] -  \E[ \| x_2-X\|_2^\beta]\big)\nonumber \\
&&\ \ \times\bigl( \| y_1-y_2\|_2^\beta + \E [\|
Y_1-Y_2\|_2^\beta] - \E[ \| y_1-Y\|_2^\beta] -  \E[ \|
y_2-Y\|_2^\beta]\big)\,, 
\eeam
and the \rhs\ is not constant. Hence, the kernel $h$ is
precisely $1$-degenerate. One can follow the lines of the proof in 
the Appendix  in \cite{dehling:matsui:mikosch:samorodnitsky:tafakori:2019} to show the following result (the only necessary change is the replacement of $[0,1]$ by $B$).
\ble\label{eq:may13}
If $X,Y$ are independent and $\E[\|X\|_2^\beta+\|Y\|_2^\beta]<\infty$  for some $\beta \in (0,2)$ then $T_{n,\beta}(X,Y)$ has \rep\ as a
$V$-statistic of order $4$ with symmetric $1$-degenerate kernel $h$. 
The corresponding $U$-statistic $\wt T_{n,\beta}(X,Y)$ 
is obtained from  $T_{n,\beta}(X,Y)$ by restricting the summation to indices 
$(i_1,i_2,i_3,i_4)$ with mutually distinct components. Then
\beao
\qquad n\,\big(T_{n,\beta}(X,Y)-\wt T_{n,\beta}(X,Y)\big)\stp
\E[\|X_1-X_2\|_2^\beta]\E[\|Y_1-Y_2\|_2^\beta] \,,\qquad \nto\,.
\eeao 
\ele
An immediate con\seq\ of this result is that, up to an additive 
constant, $(n\,T_{n,\beta}(X,Y))$ and $(n\,\wt T_{n,\beta}(X,Y))$
have the same limit \ds\ which is indicated in \eqref{eq:august20a}.
In what follows, we will focus on $\wt T_{n,\beta}(X,Y)$.
\subsection{Bootstrapping $T_n(X,Y)$}
For the sake illustration of the method we restrict ourselves to
$\beta=1$ and suppress $\beta$ in the notation. We also assume 
that 
\beam\label{eq:august20b}
\int_B\E[X^2(\bfu)+Y^2(\bfu)]d\bfu<\infty\,.
\eeam
 A generic element $Z=(X,Y)$ has trajectory $z=(x,y)$ on $B\subset \bbr^d$.
\par
Under the assumptions of Lemma~\ref{eq:may13}, 
$\wt T_{n}(X,Y)$ has \rep\  
as $U$-statistic of order 4 with a 1-degenerate symmetric 
kernel $h(x_1,x_2,x_3,x_4)$. 
Applying the Hoeffding decomposition to $\wt T_{n}(X,Y)$, the limit \ds s
(modulo a change of location/scale)  
of $nT_{n}(X,Y)$ and the following normalized version of $\wt T_n(X,Y)$
coincide:
\beao
U_n(Z)= \dfrac 1 {n}\sum_{1\le i\ne j\le n} h_2(Z_i,Z_j;F_Z)
\eeao
where $F_Z=F_X\times F_Y$ and $h_2$ is defined by 
\begin{align*}
h_2(z_1,z_2;F_Z)=&\E[h(z_1,z_2,Z_3,Z_4)]-\E [h(z_1,Z_2,Z_3,Z_4)]\\ &-\E [h(Z_1,z_2,Z_3,Z_4)]+ \E[h(Z_1,Z_2,Z_3,Z_4)]\,.
\end{align*}
We write $F_n$  for the empirical \ds\ of the sample $Z_1,\ldots,Z_n$ and 
$(Z_{n1}^\ast,\ldots,Z_{nn}^\ast)$ for a bootstrap sample, i.e.,
given $Z_1,\ldots,Z_n$ it is iid with \ds\ $F_n$.
Arcones and Gin\'e \cite{arcones:gine:1992} proved that the correct
bootstrap version of $U_n(Z)$ is   
\beao
U_n(Z^\ast)= \dfrac 1 {n}\sum_{1\le i\ne j\le n} h_2(Z_{ni}^\ast,Z_{nj}^\ast;F_{n})\,.
\eeao
The fact that the limiting distributions of $U_n(Z)$
and $U_n(Z^\ast)$ coincide follows from Dehling and Mikosch \cite{dehling:mikosch:1994}; see also Corollary 5.2 in \cite{dehling:matsui:mikosch:samorodnitsky:tafakori:2019} in the case  $B=[0,1]$.
\bpr\label{cor:dehlmik}
Under the aforementioned conditions, and if also 
$\E[|h(Z_{i_1},\ldots,Z_{i_4})|^2]<\infty$ for all indices $1\leq i_1 \leq \cdots \leq i_4\leq 4$,
we have 
\beao
d_2\big({\mathcal L}\big(U_n(Z) \big)\;,
{\mathcal L}\big(U_n(Z^\ast)
\big) \big)\to 0\,,
\qquad \nto\,,\eeao
for almost all realizations of $(Z_i)$. Here $d_2$ denotes the Wasserstein
distance of order $2$.
\epr
The additional second moment assumption on $h$ 
is satisfied for our kernel. Note that it suffices to consider the non-symmetric kernel $f$, and to show that
$ \E [(f(Z_{i_1},Z_{i_2},Z_{i_3},Z_{i_4}))^2]<\infty$ 
for all indices $1\leq i_1,\ldots,i_4\leq 4$. For our specific kernel, this condition reads as 
\beao
 \E \big[\big(\|X_{i_1}-X_{i_2}\| \big[ \|Y_{i_1}-Y_{i_2}\| +
\| Y_{i_3}-Y_{i_4} \|
 -2 \|Y_{i_1}-Y_{i_3}\|  \big] \big)^2\big] <\infty,
\eeao
and this holds under the moment conditions in this paper.
\par
The Wasserstein distance $d_2$ metrizes weak \con\ and moment \con\
up to the second order. Therefore Proposition~\ref{cor:dehlmik} proves bootstrap consistency
for $U_n(Z^\ast)$. In particular, $U_n(Z^\ast)$ and $n\,T_{n}(X,Y)$ have the
same limit \ds\ up to some change of scale/location. 
From now, we will work with
$U_n(Z)$ as a surrogate of the normalized sample distance covariance $n\,T_n(X,Y)$ 
and  with its bootstrap version $U_n(Z^\ast)$.
\subsection{Bootstrapping $U_n(Z^{(p)})$}\label{sec:bootstr}
Our goal is to show
that we are allowed to replace $Z=(X,Y)$  in $U_n(Z)$
by the corresponding 
discretizations $Z^{(p)}=(X^{(p)},Y^{(p)})$ as well as the
corresponding result for $U_n(Z^\ast)$. We restrict ourselves to the case
of random locations studied in Section~\ref{sec:randomlocation}. The 
corresponding bootstrap consistency results 
in the lattice case of Section~\ref{sec:lattice} follow by a straightforward
adaptation of the results in Dehling et al. \cite{dehling:matsui:mikosch:samorodnitsky:tafakori:2019} who considered the case $d=1$ and $B=[0,1]$.
\par
We start by showing that $U_n(Z)$ and $U_n(Z^{(p)})$ are close conditionally on $N^{(p)}$. 
\ble\label{eq:additonal}
Assume that $X,Y$ are independent, \eqref{eq:august20b} holds, and $N_p\to\infty$ \as
as $p=p_n\to\infty$.
Then
\beam
\label{squaredist:Uzzp}
\E[(U_n(Z)-U_n(Z^{(p)}))^2\mid N^{(p)}]\to 0\,,\qquad \nto\,,
\eeam
for a.e. realization of $(N^{(p)})$. 
\ele

\begin{proof}
We consider
\beam\label{eq:august23a}
U_n(Z)-U_n(Z^{(p)}\mid N^{(p)}):= \frac{1}{n} \sum_{1\le i\neq j \le n}
 \big(h_2(Z_i,Z_j)-h_2(Z_i^{(p)},Z_j^{(p)}\mid N^{(p)})\big)\,,
\eeam
where 
\beao
 && h_2(z_1,z_2; F_Z^{(p)}\mid N^{(p)}) \\
 &&:= \E[h(z_1,z_2,Z_3^{(p)},Z_4^{(p)})\mid N^{(p)}]-
 \E[h(z_1,Z_2^{(p)},Z_3^{(p)},Z_4^{(p)})\mid N^{(p)}] \\
 &&\quad -\E[h(Z_1^{(p)},z_2,Z_3^{(p)},Z_4^{(p)})\mid N^{(p)}]+
 \E[h(Z_1^{(p)},Z_2^{(p)},Z_3^{(p)},Z_4^{(p)})\mid N^{(p)}]\,. 
\eeao
By construction, given $N^{(p)}$, 
$U_n(Z)- U_n(Z^{(p)}\mid N^{(p)})$ only depends on the sample
$Z_1,\ldots,Z_n$ and is a $U$-statistic of order $2$ with symmetric 
$1$-degenerate kernel.
By Lemma A in Serfling \cite[p. 183]{serfling:1980}, we have
\beam\lefteqn{
 \E\big[\big(U_n(Z)-U_n(Z^{(p)}\mid N^{(p)})\big)^2\mid N^{(p)} \big]}  \nonumber\\ & \le  & c\, 
\E\big[\big(h_2(Z_1,Z_2) -h_2(Z_1^{(p)},Z_2^{(p)}\mid N^{(p)})\big)^2\mid N^{(p)}\big] \nonumber\\
&\le & c\,
 \E\big[\big(h(Z_1,Z_2,Z_3,Z_4)-h(Z_1^{(p)},Z_2^{(p)},Z_3^{(p)},Z_4^{(p)})\big)^2\mid 
N^{(p)}\big]\nonumber\\
&\le & c\E \big[
\big(
f(Z_1,Z_2,Z_3,Z_4)-f(Z_1^{(p)},Z_2^{(p)},Z_3^{(p)},Z_4^{(p)})
\big)^{2}\mid N^{(p)}\big]\nonumber \\
&\le&  c\ \E\big[
\big(\|X_1-X_2\|_2\|Y_1-Y_2\|_2
 -\|X_1^{(p)}-X_2^{(p)}\|_2\|Y_1^{(p)}-Y_2^{(p)}\|_2\big)^2 \nonumber\\
&&\qquad +(\|X_1-X_2\|_2\|Y_3-Y_4\|_2
 -\|X_1^{(p)}-X_2^{(p)}\|_2\|Y_3^{(p)}-Y_4^{(p)}\|_2\big)^2 \nonumber\\
&&\qquad +(\|X_1-X_2\|_2\|Y_1-Y_3\|_2
 -\|X_1^{(p)}-X_2^{(p)}\|_2\|Y_1^{(p)}-Y_3^{(p)}\|_2\big)^2 \mid N^{(p)}
\big]\label{eq:august23b} \,.
\eeam
Now we can proceed as in the proof of Proposition~\ref{prop::asymptotics1}
to show that the \rhs\ converges to zero as $\nto$ along a.e. sample
path of $(N^{(p)})_{p>0}$. We illustrate this for the first term on the 
\rhs . We have by independence of $(X_i)$ and $(Y_i)$,
\beam\label{eq:august23c}\lefteqn{
\E\big[
\big(\|X_1-X_2\|_2\|Y_1-Y_2\|_2
 -\|X_1^{(p)}-X_2^{(p)}\|_2\|Y_1^{(p)}-Y_2^{(p)}\|_2\big)^2\mid N^{(p)}\big]}\nonumber \\
&\le &
c\,\E\big[
\big( \|X_1-X_2\|_2-\|X_1^{(p)}-X_2^{(p)}\|_2\big)^2 \mid N^{(p)}\big]
\, \E[\|Y_1-Y_2\|_2^2]\nonumber\\
&& +c\,\E\big[\|X_1^{(p)}-X_2^{(p)}\|_2^2\mid N^{(p)}\big]\, 
\E\big[\big(\|Y_1-Y_2\|_2-\|Y_1^{(p)}-Y_2^{(p)}\|_2\big)^2\mid N^{(p)}\big]\,.
\eeam
The expectation  $\E[\|Y_1-Y_2\|_2^2]$ is finite by \eqref{eq:august20b}.
For the same reason and by Riemann-sum approximation along a.e. sample
path of $(N^{(p)})_{p>0}$,
\beao
\E\big[\|X_1^{(p)}-X_2^{(p)}\|_2^2\mid N^{(p)}\big]
&=& \dfrac 1 {N_p} \sum_{i=1}^{N_p} \E\big[(X_1-X_2)^2(\bfU_i)\mid \bfU_i\big]\\
&\to & \int_B \E\big[(X_1-X_2)^2(\bfu)\big]\,d\bfu\,.
\eeao
We have by dominated \con 
\beao
&& \E\big[ \big( \|X_1-X_2\|_2-\|X_1^{(p)}-X_2^{(p)}\|_2\big)^2 \mid (N^{(q)})_{q>0}\big] \\
&&\le  \E\big[\big|\int_B(X_1-X_2)^2(\bfu) d\bfu - 
\|X_1^{(p)}-X_2^{(p)}\|_2^2 \big|\mid (N^{(q)})_{q>0} \big]\\
&& \to 0\,,\qquad p\to\infty\,,
\eeao
and the corresponding result holds if we replace $(X_1,X_2)$ by
$(Y_1,Y_2)$. Hence the \rhs\ in \eqref{eq:august23c} converges to zero
along a.e. sample path of $(N^{(p)})$.
\end{proof}

Our next goal is to show that 
$U_n(Z^\ast)$ and $U_n(Z^{(p)\ast}\mid N^{(p)})$ are asymptotically close
where the latter quantity is defined as in \eqref{eq:august23a} if we 
replace the \ds\ of $Z$ by $F_n$.
We write $\var^\ast$ for the variance \wrt\ bootstrap \pro y \ms .
\par
We will need some stronger assumptions to achieve this goal.
\ble\label{lem:boot} We assume that $(N_p)$ are Poisson variables \st\
$ \E[N_p]=p=p_n$ and $N_p\to\infty$ a.s. as $\nto$, $\sum_np_n^{-1/2}<\infty$ and
\beao
\int_B \E[X^4(\bfu)+Y^4(\bfu)]\,d\bfu<\infty\,.
\eeao
Then for a.e. sample paths of $(N^{(p)})$ and $(Z_i)$,
\beao
\P\big(U_n(Z^\ast)-U_n(Z^{\ast (p)}\mid N^{(p)})\to 0\,,\nto\mid (Z_i),(N^{(p)})\big)=1\,.
\eeao
\ele
\begin{proof}
We observe that given $N^{(p)}$, 
\beao
U_n(Z^\ast)-U_n(Z^{\ast (p)}\mid N^{(p)})= \frac{1}{n} 
\sum_{1\le i\neq j \le n}
 \big(h_2(Z_i^\ast,Z_j^\ast)-h_2(Z_i^{\ast(p)},Z_j^{\ast (p)}\mid N^{(p)})\big)
\eeao
is a $U$-statistic of order $2$ with a $1$-degenerate kernel. Again applying 
the variance formula in Lemma A of \cite{serfling:1980}, p. 183, we obtain
\beao
&& \var^\ast(U_n(Z^\ast) -U_n(Z^{\ast (p)}\mid N^{(p)})\mid N^{(p)}) \\
&&\le  c\,
 \var^\ast(h_2(Z_1^\ast,Z_2^\ast)-h_2(Z_1^{\ast (p)},Z_2^{\ast (p)}\mid N^{(p)})\mid N^{(p)})\\
&& = c\,\frac{1}{n^2} \sum_{k,\ell=1}^n\big(
h_2(Z_k,Z_\ell)-h_2(Z_k^{(p)},Z_\ell^{(p)}\mid N^{(p)})
\big)^2.
\eeao
Taking expectations on both sides, we have  
\beao
 \E\big[\var^\ast(U_n(Z^\ast)-U_n(Z^{\ast(p)}\mid N^{(p)})\mid N^{(p)})\big] &\le& c\, \E\big[\big(
h_2(Z_1,Z_2)-h_2(Z_1^{(p)},Z_2^{(p)}\mid N^{(p)})
\big)^2\big]\,.
\eeao
Now appeal to \eqref{eq:august23b} and take expectations 
on both sides. Keeping in mind 
\eqref{eq:august23c}, we have to bound expressions of the 
following type:
\beam\label{eq:nov27}
\lefteqn{c\,\E\Big[\E\big[
\big( \|X_1-X_2\|_2-\|X_1^{(p)}-X_2^{(p)}\|_2\big)^2 \mid N^{(p)}\big]
\, \E[\|Y_1-Y_2\|_2^2]}\nonumber\\
&& +c\,\E\big[\|X_1^{(p)}-X_2^{(p)}\|_2^2
\,\big(\|Y_1-Y_2\|_2-\|Y_1^{(p)}-Y_2^{(p)}\|_2\big)^2\mid  N^{(p)}\big]\Big]\nonumber\\
&\le&c \,\E\big[\big( \|X_1-X_2\|_2-\|X_1^{(p)}-X_2^{(p)}\|_2\big)^2\big]\nonumber\\
&&+c\,\big(\E\big[\|X_1^{(p)}-X_2^{(p)}\|_2^4\big]\big)^{1/2}\,
\big(\E\big[\big(\|Y_1-Y_2\|_2-\|Y_1^{(p)}-Y_2^{(p)}\|_2\big)^4\big]\big)^{1/2}\,.
\eeam
We write $A=X_1-X_2$. Then we have for the first expression on the \rhs 
\beam\label{eq:nov27a}
c\,\E\big[\big( \|A\|_2-\|A^{(p)}\|_2\big)^2\big]
&\le & c\,\E\Big[\E\Big[N_p^{-1} \Big|\sum_{i=1}^{N_p}\Big( A^2(\bfU_i)-\int_B A^2(\bfu)d \bfu\Big)\Big|\; \Big| A,N_p\Big]\Big]\nonumber\\
&\le &c\,\E\Big[\Big(\E\Big[\Big(N_p^{-1} \sum_{i=1}^{N_p}\Big( A^2(\bfU_i)-\int_B A^2(\bfu)d \bfu\Big)\Big)^2\; \Big|A,N_p\Big]\Big)^{1/2}\Big]\nonumber\\
&=& c\, \E\big[N_p^{-1/2}\1(N_p>0)\big]\, \E\Big[\big(\var\big( A^2(\bfU_1)\mid A\big)\big)^{1/2}\Big]\nonumber\\
&\le &c\, \E\big[N_p^{-1/2}\1(N_p>0)]\, \Big(\int_B \E[(X_1-X_2)^4(\bfu)] d\bfu\Big)^{1/2}\,.
\eeam
The second factor is finite by the moment assumptions on $X$.  
Using the Poisson structure of $N_p$, we can apply Lemma~4.1 
to the first factor and conclude that it has the \asy\ order  $O(p^{-1/2})$.
Now we consider the first expectation in \eqref{eq:nov27}: 
\beao
\E\big[\|A^{(p)}\|_2^4\big]= \E\Big[\Big(N_p^{-1}\sum_{i=1}^{N_p} A^2(\bfU_i)\Big)^2 \Big]
&\le &\E\Big[N_p^{-1}\sum_{i=1}^{N_p} A^4(\bfU_i) \Big]\\
&=& \E[\1(N_p>0)]\, \int_B \E\big[(X_1-X_2)^4(\bfu)\big]d\bfu\,.
\eeao
The \rhs\ is finite in view of the moment conditions on $X$.
Next we turn to the second expression in \eqref{eq:nov27}.
Write $C=Y_1-Y_2$. It remains to bound
\beao
\big(\E\big[\big(\|C\|_2-\|C^{(p)}\|_2\big)^4\big]\big)^{1/2}
&\le &
c\,\Big(\E\Big[\E\Big[\Big(N_p^{-1}\sum_{i=1}^{N_p}\Big( C^2(\bfU_i)-\int_B C^2(\bfu)d \bfu\Big)\Big)^2\; \Big|C,N_p\Big]\Big]\Big)^{1/2}\\
&\le & c\,\big(\E\big[N_p^{-1}\1(N_p>0)]\big)^{1/2}\, 
\Big(\int_B \E[ C^4(\bfu)]\, d\bfu\Big)^{1/2}\\
&=&O(p^{-1/2})\,,\qquad n\to\infty\,.
\eeao
Here we again used Lemma~4.1. 
Summarizing the bounds above, we conclude that for $\vep>0$,
\beao
&& \sum_n \P(|U_n(Z^\ast)- U_n(Z^{\ast (p))}\mid N^{(p)})|>\vep) \\
&&\le  \vep^{-2}\sum_n\E\Big[\var^\ast\big( U_n(Z^\ast)- U_n(Z^{\ast (p)}) \mid N^{(p)})\mid N^{(p)}
\big)\Big]\\
&&\le c\,\sum_np_n^{-1/2}<\infty\,.
\eeao
Therefore the Borel-Cantelli lemma implies that
\beao
U_n(Z^\ast)-U_n(Z^{\ast (p)}\mid N^{(p)})\to 0\,,\qquad \nto\,.
\eeao
for a.e. sample path of $(Z_i)$ and $(N^{(p)})_{p>0}$.
\end{proof}
\bco
Under the conditions of Lemma~\ref{lem:boot}, $(n\,T_n(X,Y))$ and 
$(U_n(Z^{\ast (p)})$ (conditional on $(N^{(p)})$) have the same limit \ds\
in \eqref{eq:may8fa} (up to changes of scale/location).
\eco
This result means that the modified bootstrap for 
$(n\,T_n(X^{(p)},Y^{(p)}))$ is 
consistent.
\begin{proof}
Following the discussion in Sections~\ref{subsec:appa} and \ref{sec:bootstr},
$(nT_n(X,Y))$, $(U_n(Z))$ and $(U_n(Z^\ast))$ (given the data) have the same
limit \ds\ (up to possible changes of scale/location). In this section
we proved that $(U_n(Z))$ and $(U_n(Z^{(p)}))$ (given $(N^{(p)})$) and 
$(U_n(Z^\ast))$ and $(U_n(Z^{\ast(p)}))$ (given the data and $(N^{(p)})$) have the 
same limit \ds . This finishes the proof.
\end{proof}
It remains to show the bootstrap consistency for $(n\,R_n(X^{(p)},Y^{(p)}))$.
We consider the following modified bootstrap version of the latter \seq :
\beao
\dfrac{n\,U_n(Z^{\ast (p)})}{\sqrt{T_n(X^{(p)},X^{(p)})\,T_n(Y^{(p)},Y^{(p)})}}. 
\eeao
Since under our moment conditions, $T_n(X,X) \stas T(X,X)$ and $T_n(Y,Y)\stas
T(Y,Y)$ by the \slln\ for $V$-statistics it remains to show that
\beam\label{fin}
T_n(X^{(p)},X^{(p)})-T_n(X,X)\stas 0\,,\qquad \nto\,,
\eeam 
and the corresponding result for $(T_n(Y^{(p)},Y^{(p)}))$. This is the content
of the following lemma.
\ble
Under the conditions of Lemma~\ref{lem:boot} and summability of 
$(p_n^{-1/4})$, \eqref{fin} and the corresponding 
result for $(T_n(Y^{(p)},Y^{(p)}))$ hold.
\ele
\begin{proof} We have the decomposition
\beao\lefteqn{
T_n(X,X)-T_n(X^{(p)},X^{(p)})}\\&=&
\dfrac 1 {n^2} 
\sum_{i,j=1}^n  \big(\|X_i-X_j\|_2^2- \|X_i^{(p)}-X_j^{(p)}\|_2^2)\\&&
+\Big[ \Big(\dfrac 1{n^2}\sum_{i,j=1}^n \|X_i-X_j\|_2\Big)^2-\Big(\dfrac 1 {n^2}\sum_{i,j=1}^n
\|X_i^{(p)}-X_j^{(p)}\|_2\Big)^2\Big]\\&&
-2 \dfrac 1 {n^3} \sum_{i,j,k=1}^n\Big[ 
(\|X_i-X_j\|_2-\|X_i^{(p)}-X_j^{(p)}\|_2)\,\|X_i-X_k\|_2\\&& 
+  \|X_i^{(p)}-X_j^{(p)}\|_2\,(\|X_i-X_k\|_2-\|X_i^{(p)}-X_k^{(p)}\|_2)\Big]\\
&=:& I_1+I_2-2 (I_{31}+I_{32})\,.
\eeao 
Writing $A=X_1-X_2$, we see that
\begin{align*}
\E[|I_1|] \le  \E\big[|\|A\|_2^2-\|A^{(p)}\|_2^2|\big]=
\E\Big[\E\Big[N_p^{-1} \Big|\sum_{i=1}^{N_p}\Big( A^2(\bfU_i)-\int_B A^2(\bfu)d \bfu\Big)\Big|\; \Big| A,N_p\Big]\Big]\,.
\end{align*}
Now we can proceed as for \eqref{eq:nov27a} and conclude that the \rhs\
is $O(p^{-1/2})$. Since we assume summability of $(p_n^{-1/4})$ the Borel-Cantelli lemma yields $I_1\stas 0$.
\par
For $I_2\stas 0$ it suffices to prove that
\beao
I_4:= \dfrac 1{n^2}\sum_{i,j=1}^n \big(\|X_i-X_j\|_2-\|X_i^{(p)}-X_j^{(p)}\|_2\big)\stas 0\,,\qquad \nto\, 
\eeao 
since, by the \slln\ for $U$-statistics,  $n^{-2}
\sum_{i,j=1}^n \|X_i-X_j\|_2\stas \E[\|X_1-X_2\|_2 ]$.
We have
\beao
\E[|I_4|]&\le & \E\big[\big|\|A\|_2-\|A^{(p)}\|_2\big|\big]\le  \big(\E\big[\big|\|A\|_2-\|A^{(p)}\|_2\big|^2\big]\big)^{1/2}. 
\eeao
Following the proof in \eqref{eq:nov27a}, the \rhs\ is bounded
by $c\,(\E[N_p^{-1/2}\1(N_p>0)])^{1/2}=O(p^{-1/4})$. The summability of 
$(p_n^{-1/4})$ and the Borel-Cantelli lemma prove that $I_4\stas 0$.
\par
We focus on showing $I_{32}\stas 0$; the case $I_{31}\stas 0$ is analogous.
We have
\beao
\E[|I_{32}|]&\le & \E\big[
\|X_1^{(p)}-X_2^{(p)}\|_2\,\big|\|X_1-X_3\|_2-\|X_1^{(p)}-X_3^{(p)}\|_2\big|\big]\\
&\le & \big(\E\big[ \|A^{(p)}\|_2^2\big]\big)^{1/2}\,\big(\E\big[
\big(\|A\|_2-\|A^{(p)}\|_2\big)^2\big]\big)^{1/2}\,.
\eeao
The first expected value is bounded uniformly for $p$. Another application
of \eqref{eq:nov27a} shows that the \rhs\ is of the order $O(p_n^{-1/4})$.
Another application of the Borel-Cantelli lemma proves $I_{32}\stas 0$.
This finishes the proof.
\end{proof}
\section{A Monte Carlo study}\label{sec:simulations}\setcounter{equation}{0}
\subsection{Finite sample study of the \con }
In this section we conduct a Monte Carlo study of 
the finite sample behavior of the 
sample distance correlation for $\beta=1$; in what follows we suppress $\beta$ in the notation. We consider a fractional Brownian sheet
(fB for short) with Hurst parameter
 $H=(H_1,H_2)\in (0,1)^2$, in particular, $H_i\in \{1/4,1/2,3/4\}$.
As a heavy-tailed alternative we choose 
symmetric  $1.8$-stable \levy\ sheets. The observations
are given either on a lattice or at random locations in $[0,1]^2$. 
In the lattice case we take $q$ equidistant grid points on 
each side of $[0,1]^2$, resulting in a lattice of  
$p=q\times q$ points. We choose sample sizes $n\in\{100,200,300\}$ 
and repeat the Monte Carlo simulations 500 times
for each process.

\begin{figure}[h!]
\begin{center}
\includegraphics[width=0.9\textwidth]{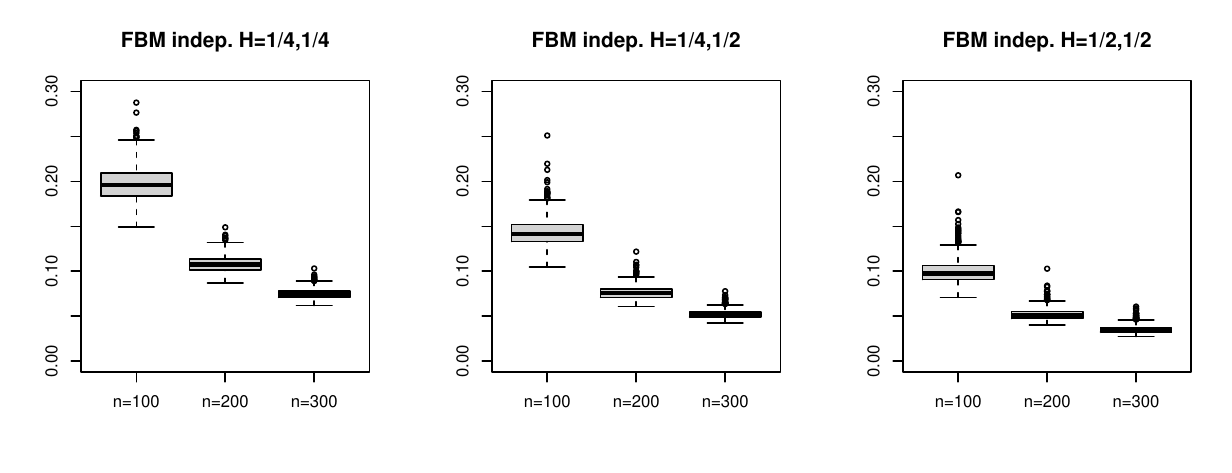}
\includegraphics[width=0.9\textwidth]{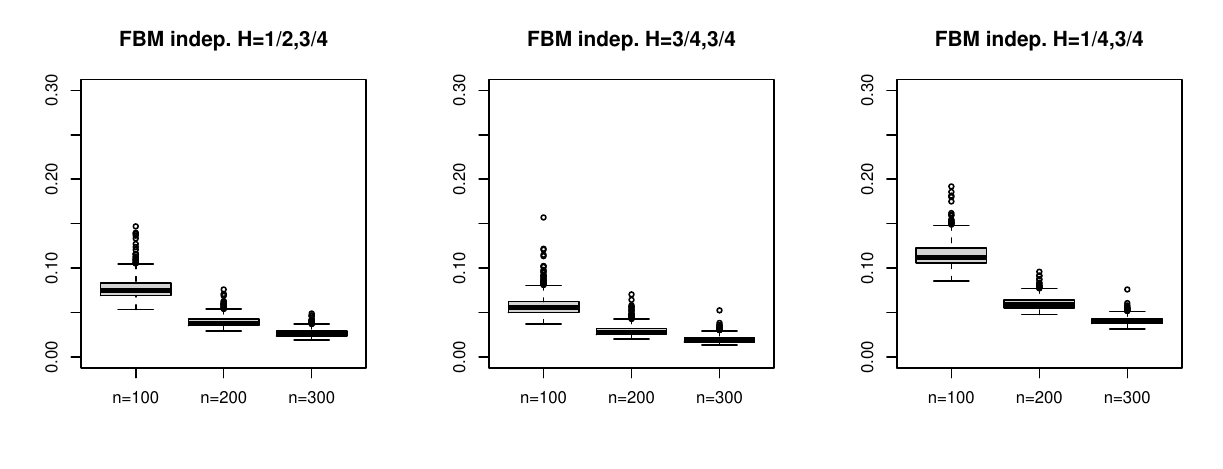}
\includegraphics[width=0.9\textwidth]{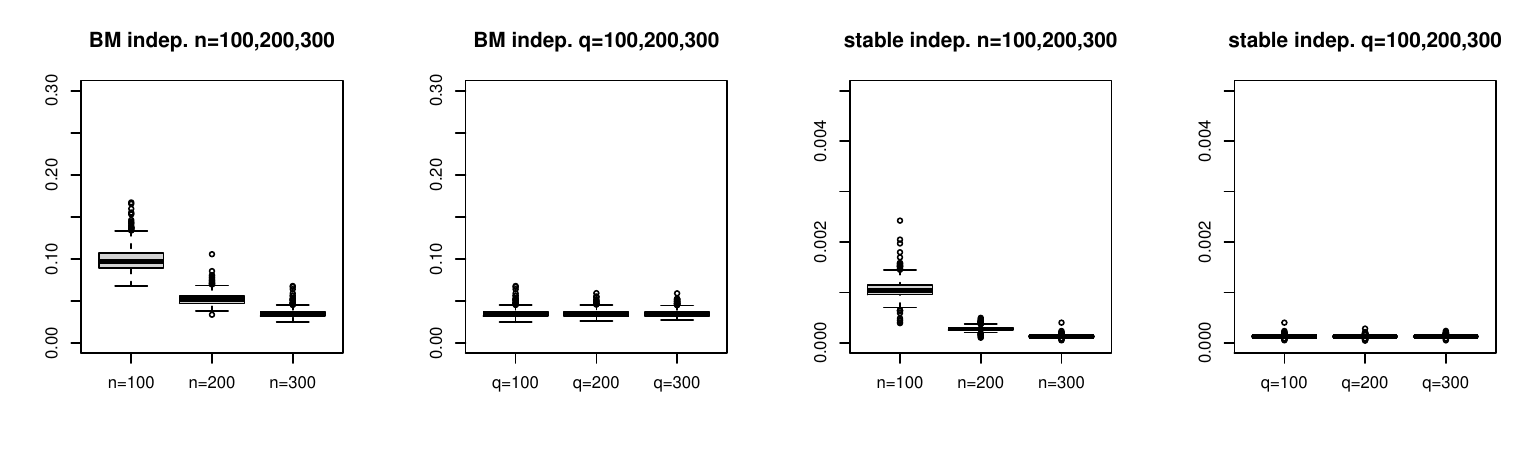}
\end{center}
\bfig\label{fig:1}{\rm \small 
Boxplots for $R_n(X^{(p)},Y^{(p)})$ 
based on 500 simulations of independent sheets $X,Y$. 
The sheets are simulated on a $p=q \times q$ 
equidistant lattice on $[0,1]^2$
for $q=30$. {\bf Top and middle row:} fB sheets for different 
choices of Hurst coefficients $H_i\in\{1/4, 1/2,3/4\}$, $i=1,2$ and 
increasing sample size $n$. {\bf Bottom  row:}
 Effects of increasing $n$ (for fixed $q=100$) and increasing 
 $q$ (for fixed $n=300$). The left (right) graphs are based on 
 Brownian (1.8-stable) sheets.}\efig
\end{figure}

\begin{figure}[h!]
\begin{center}
\includegraphics[width=0.9\textwidth]{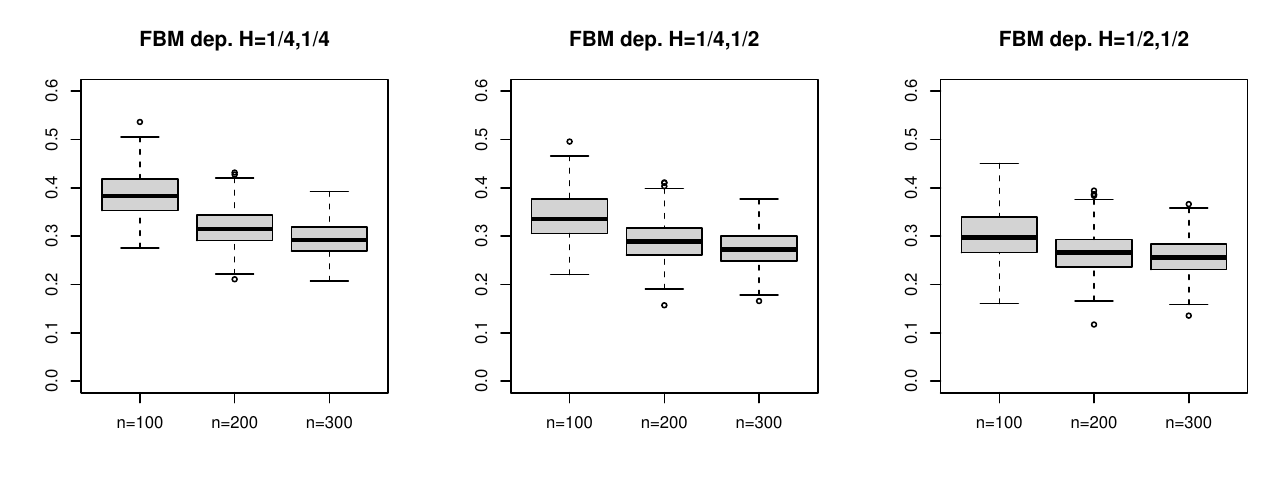}
\includegraphics[width=0.9\textwidth]{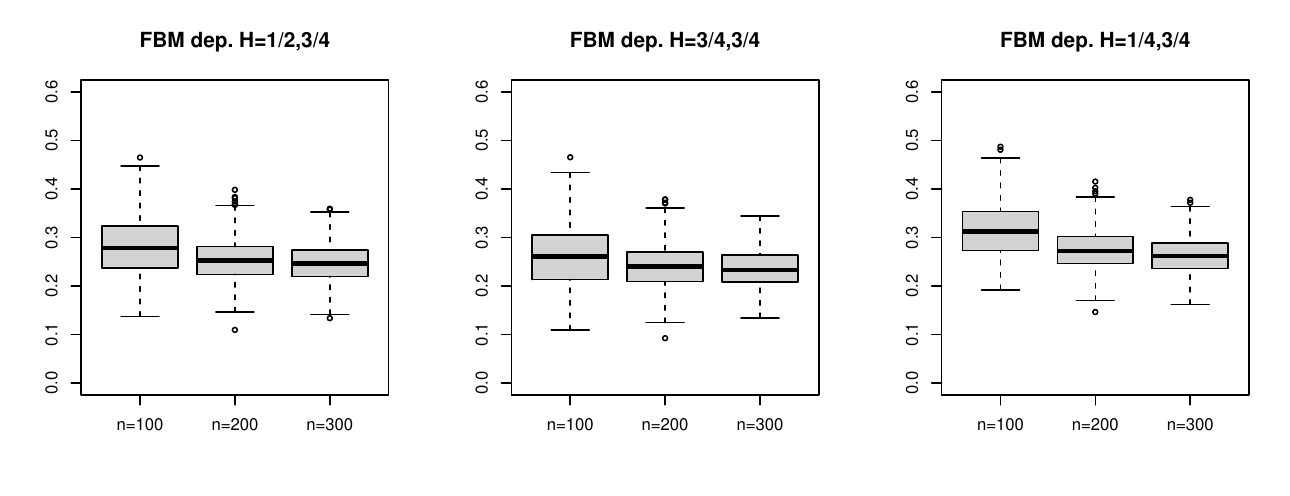}
\includegraphics[width=0.9\textwidth]{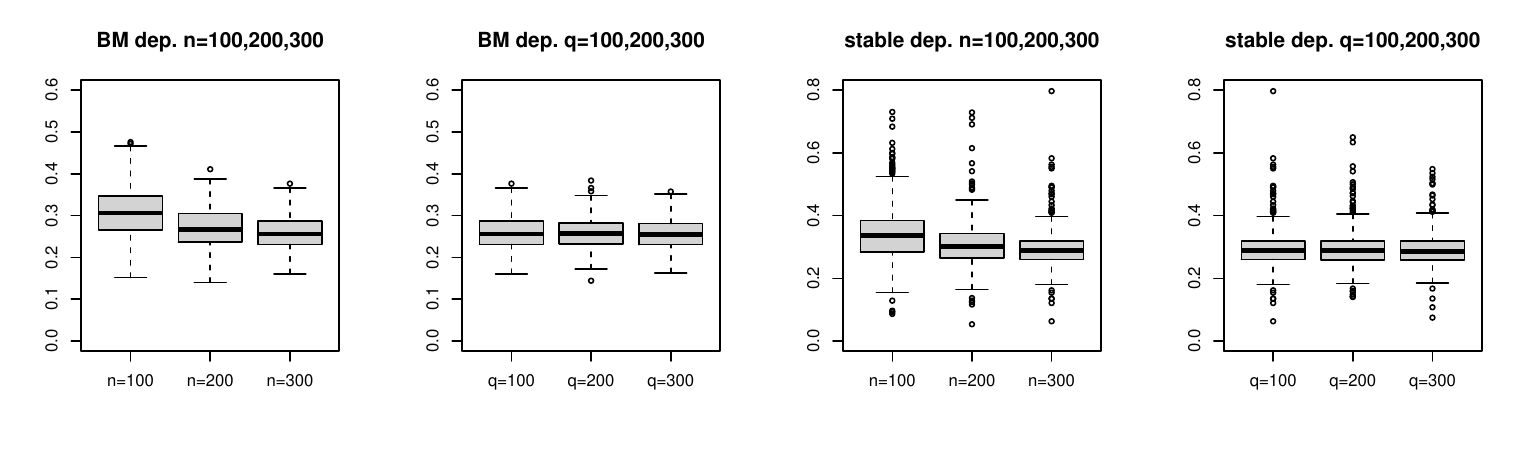}
\end{center}
\bfig\label{fig:2}{\rm \small Boxplots for $R_n(X^{(p)},Y^{(p)})$ based on 500 simulations 
of dependent fB and $1.8$-stable sheets $X,Y$ in the lattice case.
The parameters $H_1$ and $H_2$ and the values $n,p$
are the same as for the corresponding graph at the same location 
in Figure~\ref{fig:1}. The construction of $X,Y$ is based on 
\eqref{def:depBsheets} and 
\eqref{def:depstablesheets} for fB and 1.8-stable sheets for $\rho=0.5$, respectively.}\efig 
\end{figure}
\begin{figure}[h!]
\begin{tabular}{@{}ccc@{}}
\includegraphics[width=.28\textwidth]{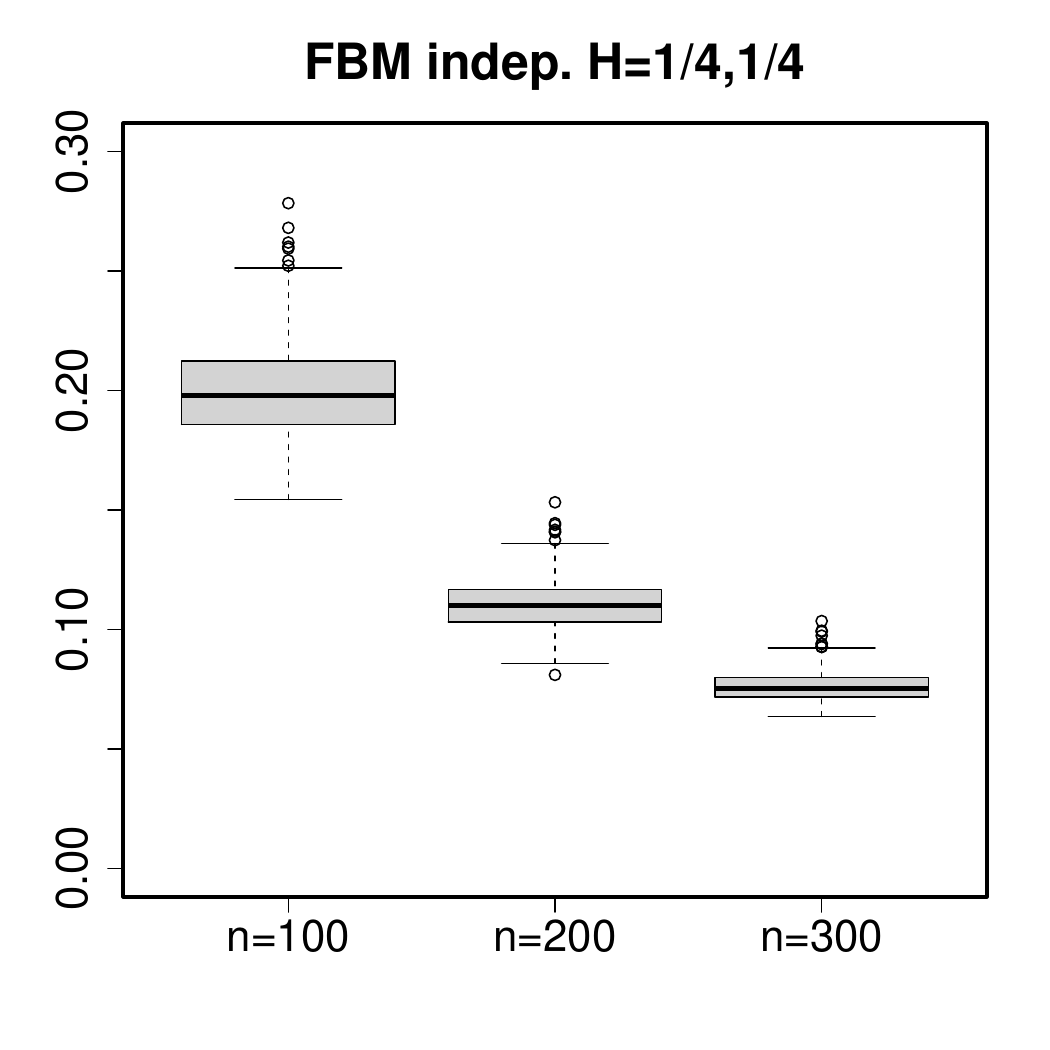}&
 \includegraphics[width=.28\textwidth]{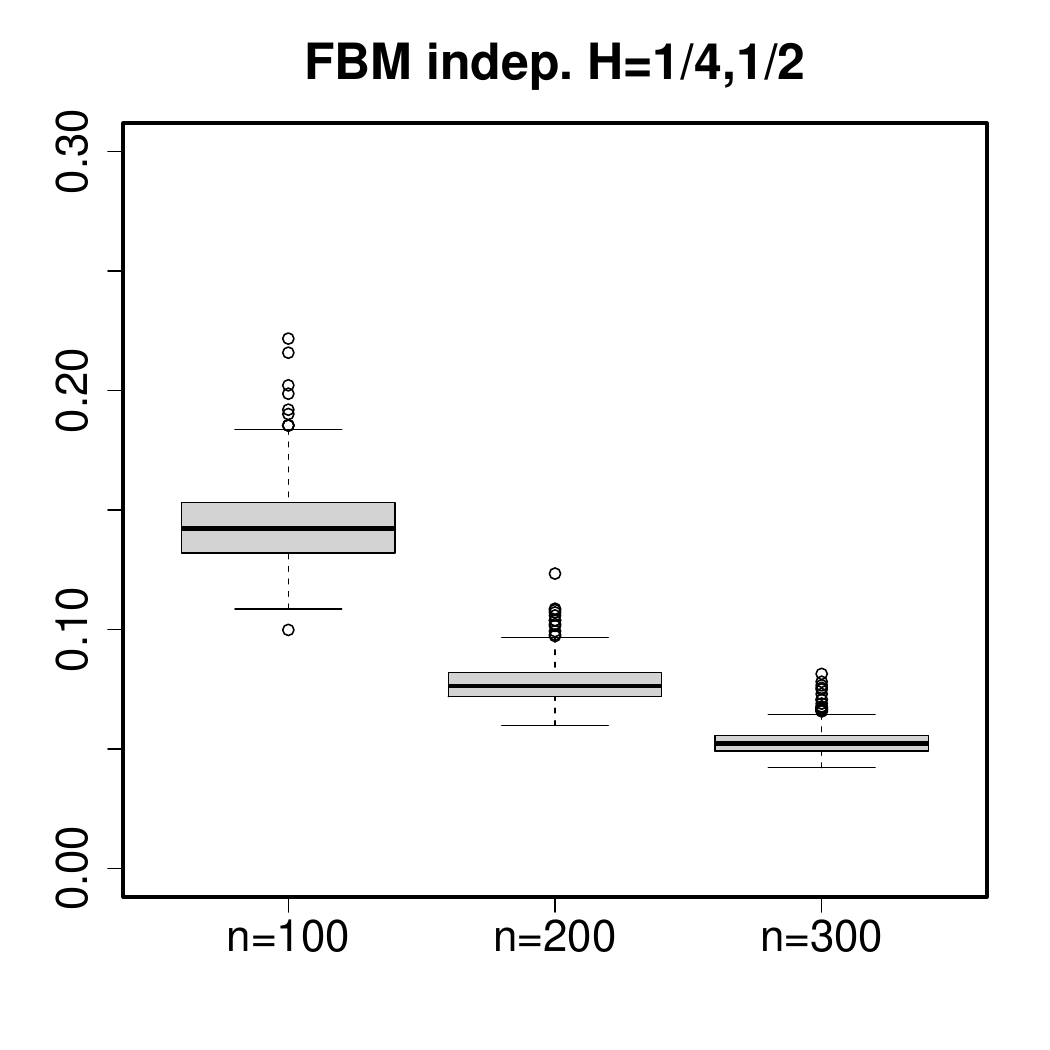}&
\includegraphics[width=.28\textwidth]{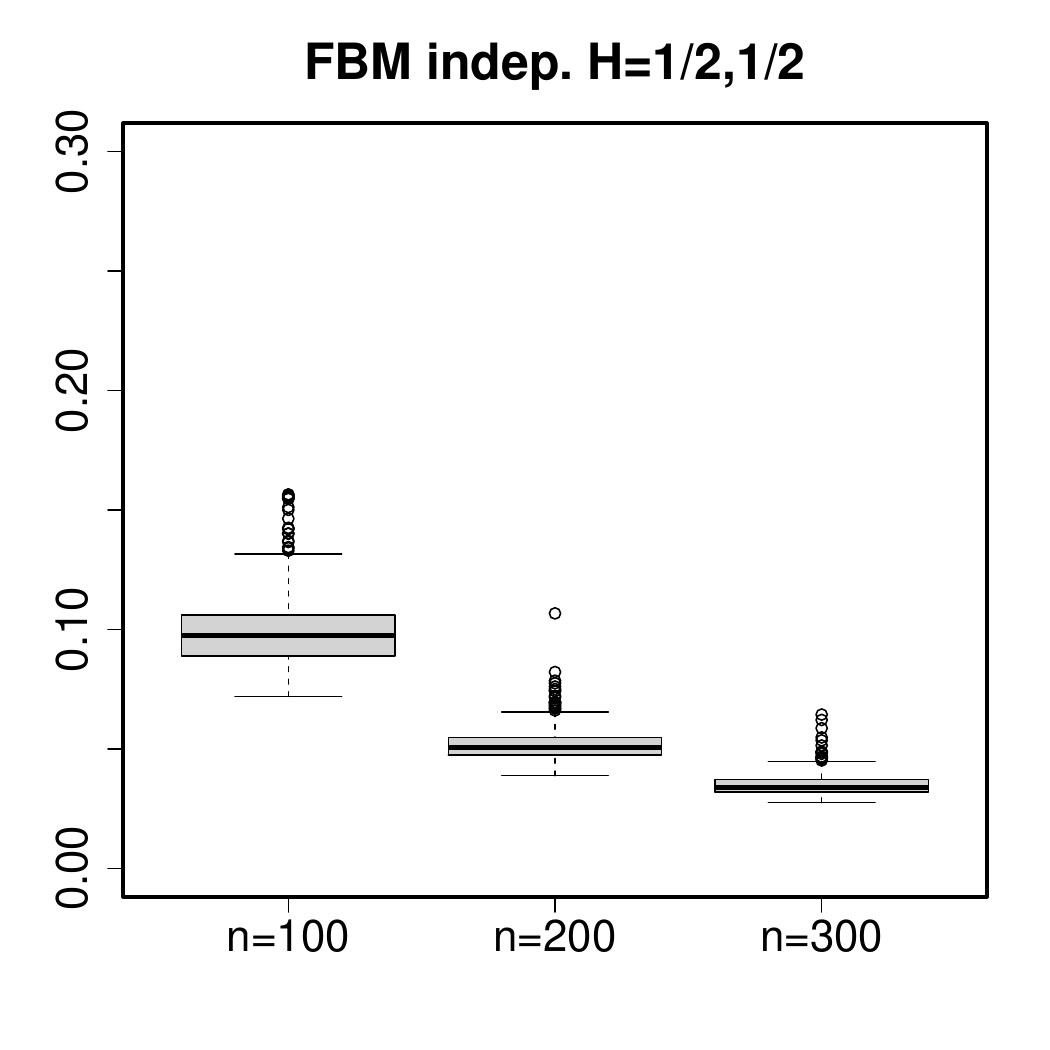}\\
 \includegraphics[width=.28\textwidth]{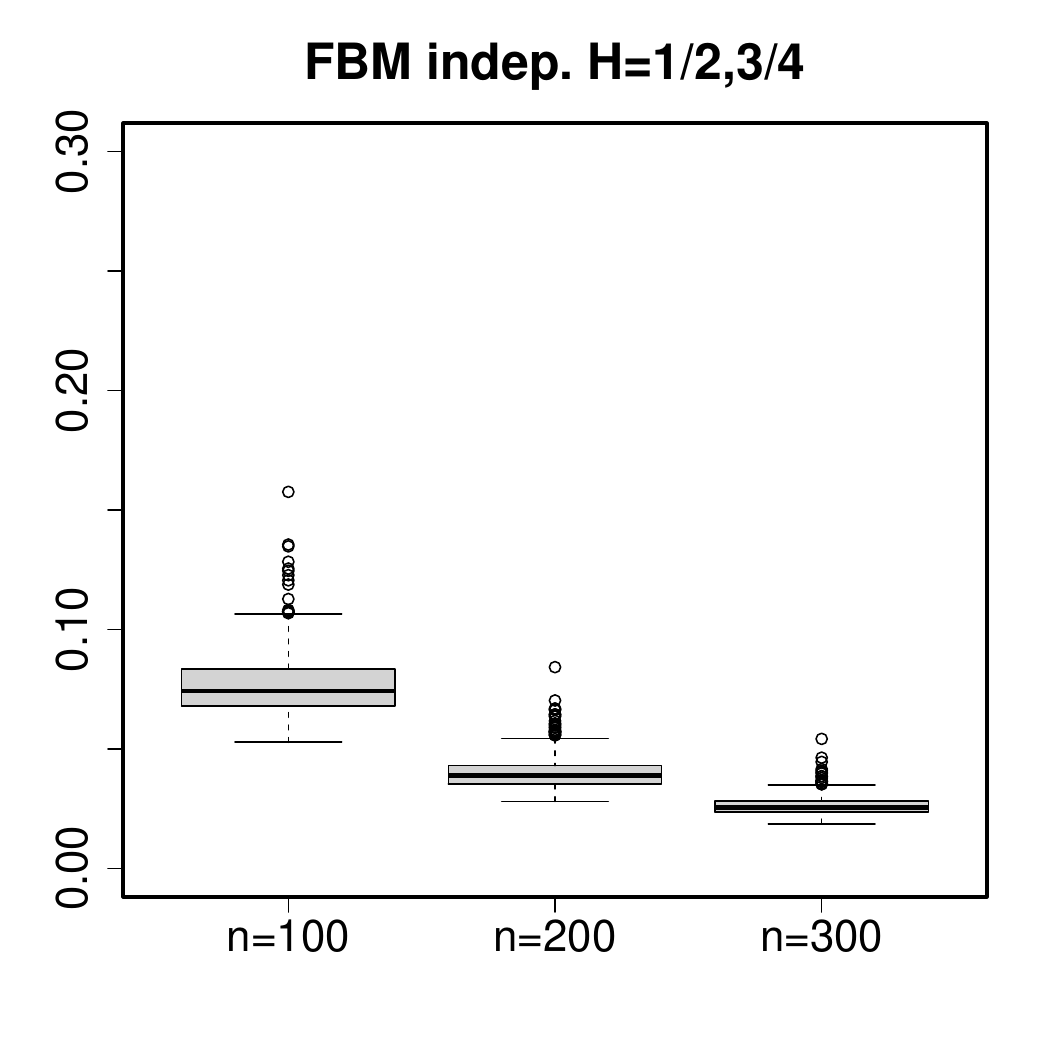}&
 \includegraphics[width=.28\textwidth]{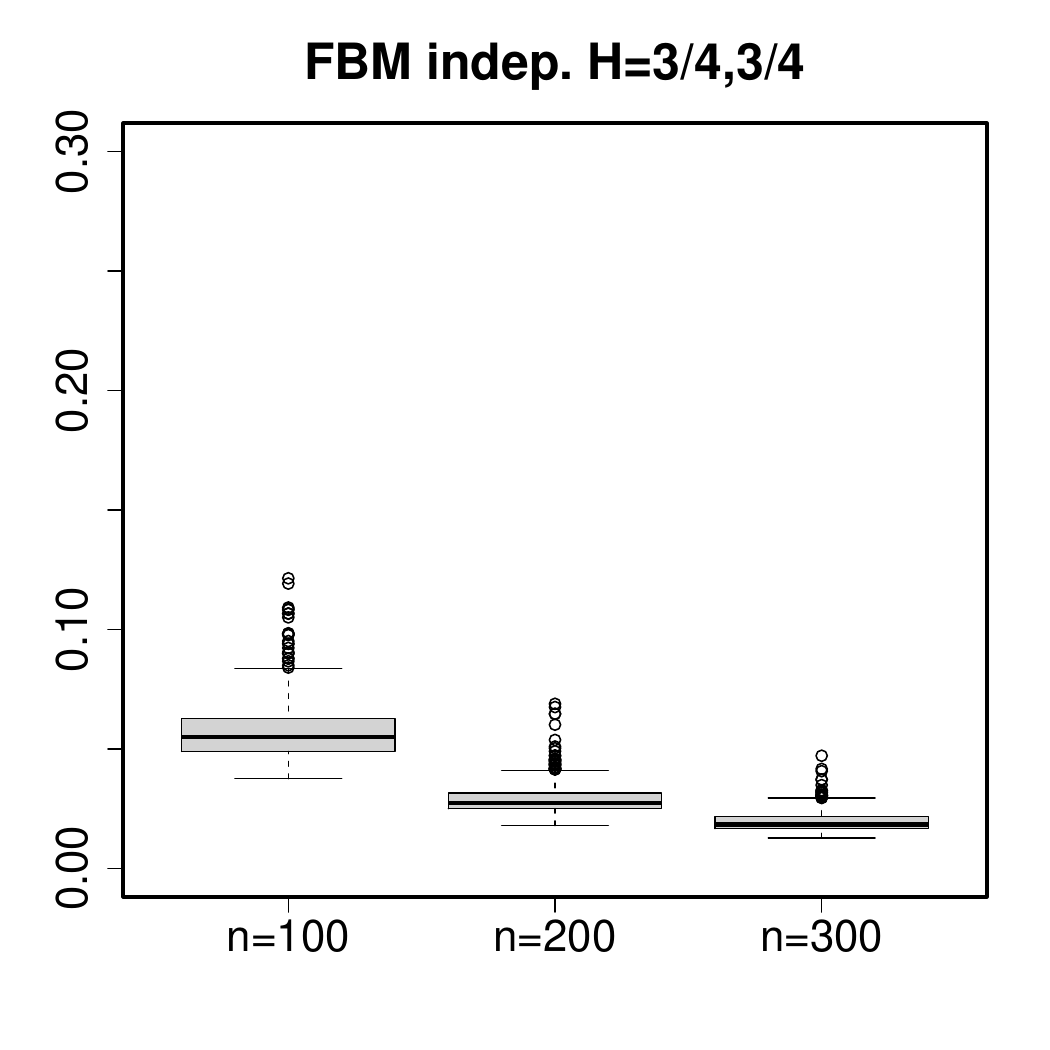}&
\includegraphics[width=.28\textwidth]{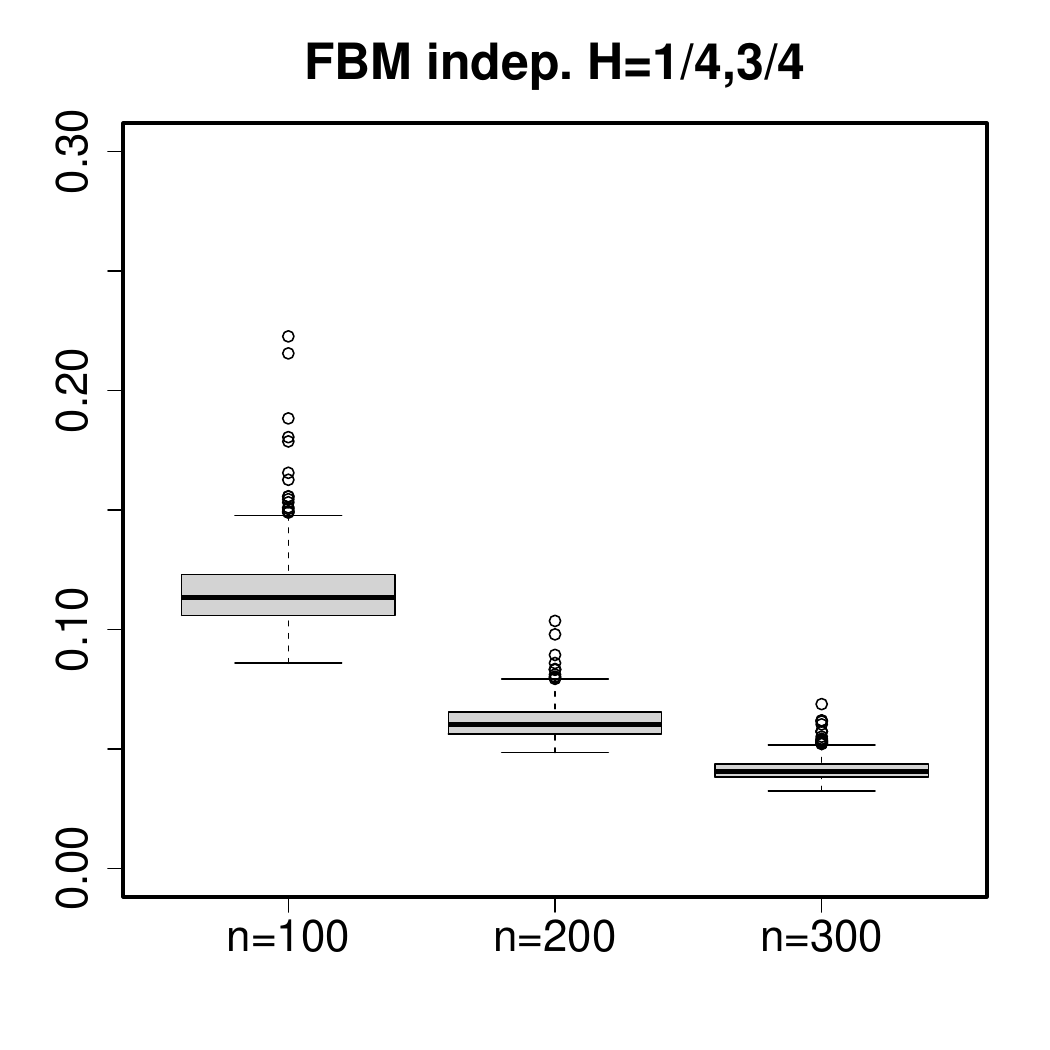}\\
\multicolumn{3}{c}{\includegraphics[width=.89\textwidth]{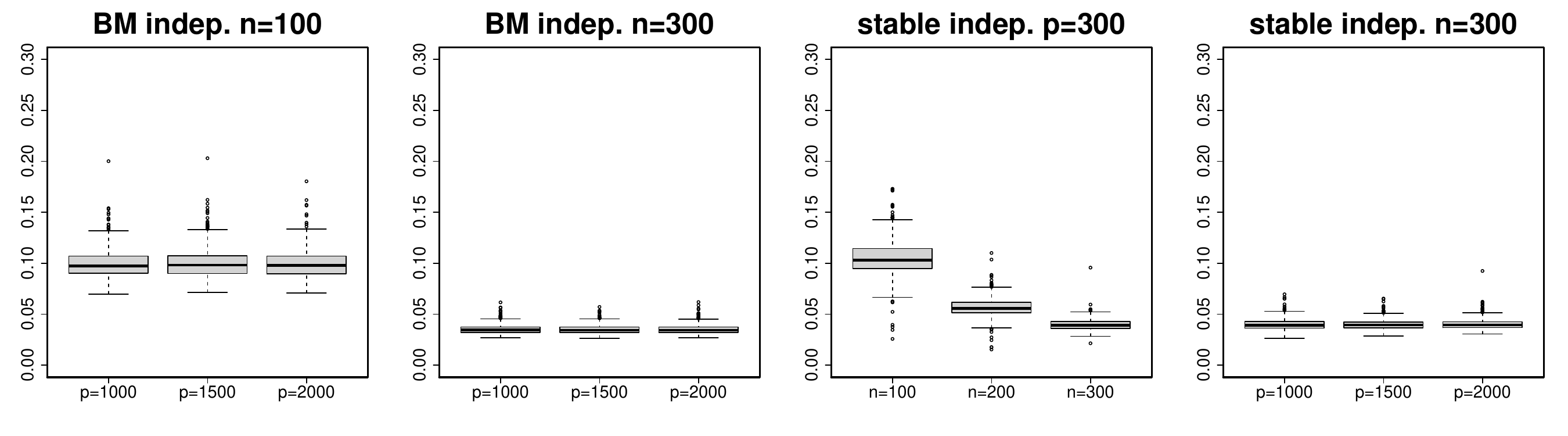}}\\
\end{tabular}
\bfig\label{fig:3}{\rm \small Boxplots for $R_n(X^{(p)},Y^{(p)})$ based on 500 simulations 
of independent fB and 1.8-stable sheets $X,Y$ at $N_p\sim{\Pois}(p)$, $p=1000$, 
uniformly distributed locations in $[0,1]^2$. 
The parameters $H_1$ and $H_2$ and the values $n$
are the same as for the corresponding graph at the same location  
in Figure~\ref{fig:1}. {\bf Top and middle row:}
fB sheets for different choices of Hurst parameters $H_i\in\{1/4, 1/2,3/4\}$,
 $i=1,2$, and increasing sample size $n$.  {\bf Bottom row:}
 Effects of 
 increasing $p$ for independent
 Brownian sheets and $n=100, 300$ (left two). In the two right graphs 
we consider the case of independent 
 $1.8$-stable sheets $X,Y$ with increasing $n$ ($p$) and fixed $p=300$ 
($n=300$).}\efig
\end{figure}
\begin{figure}[h!]
\begin{tabular}{@{}ccc@{}}
\includegraphics[width=.28\textwidth]{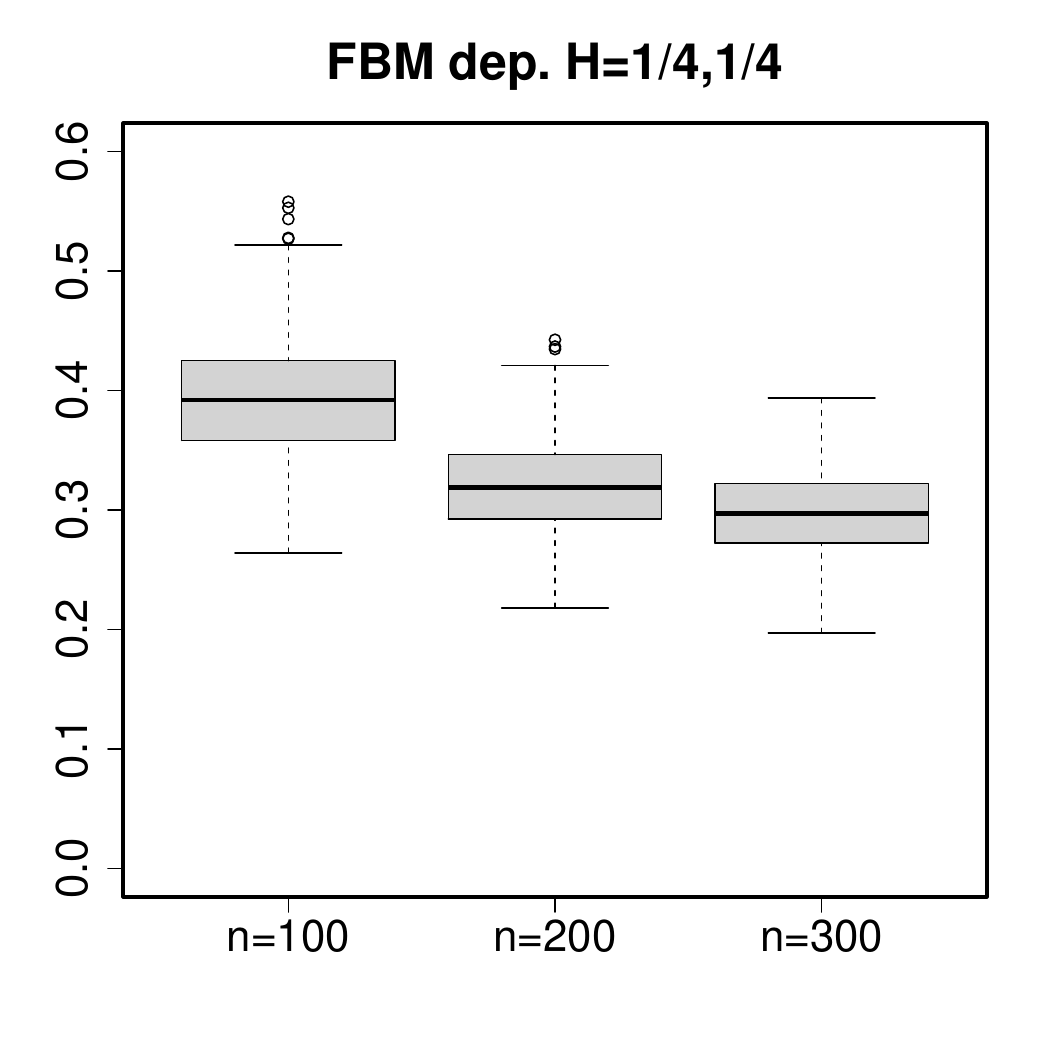}&
 \includegraphics[width=.28\textwidth]{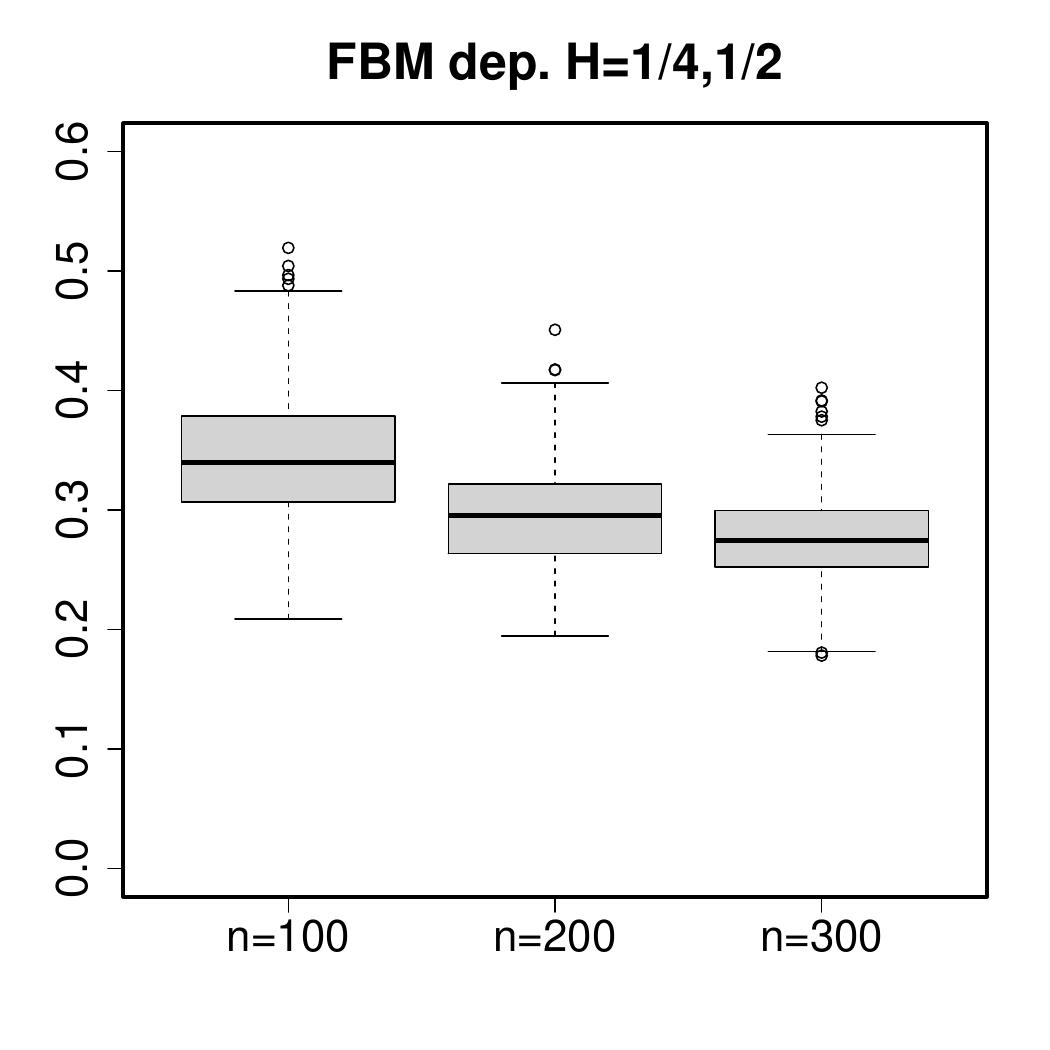}&
\includegraphics[width=.28\textwidth]{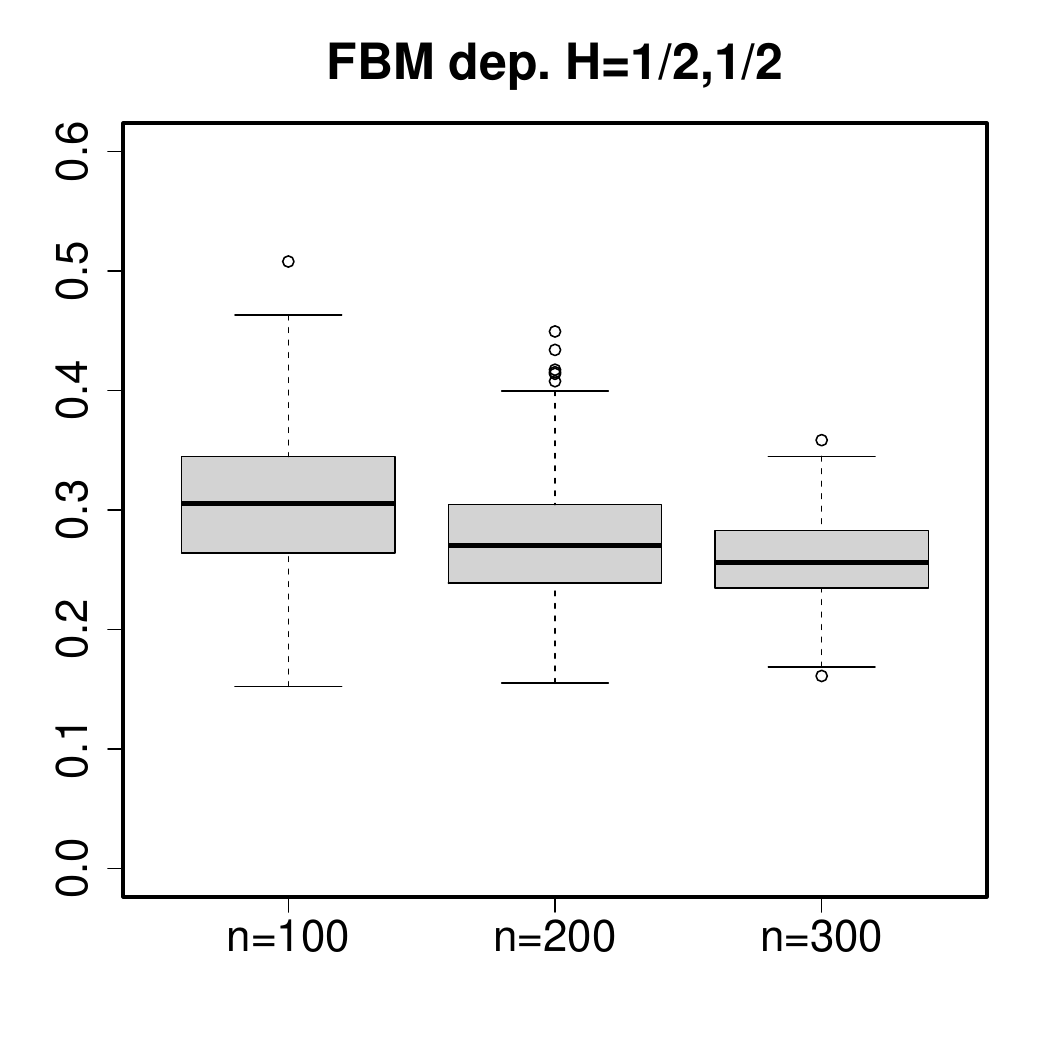}\\
 \includegraphics[width=.28\textwidth]{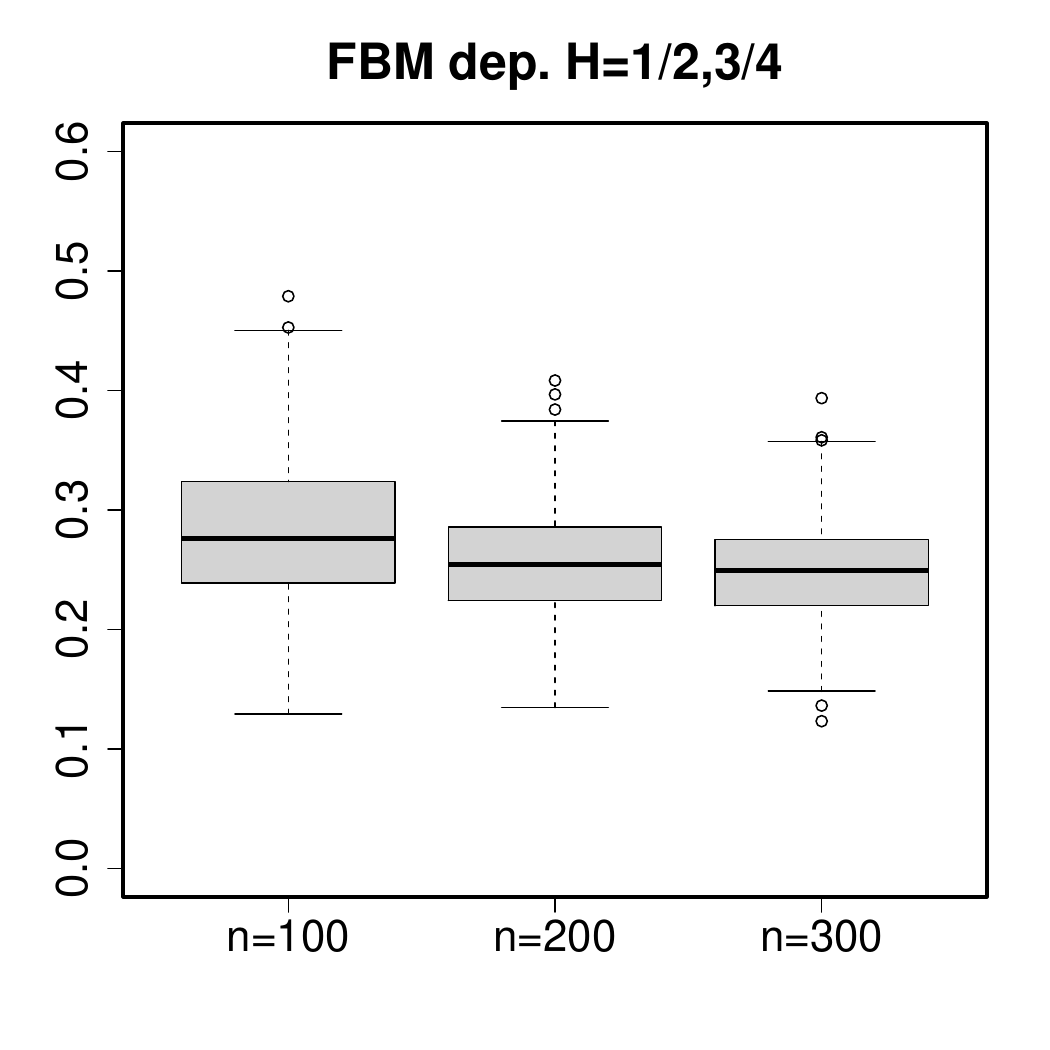}&
 \includegraphics[width=.28\textwidth]{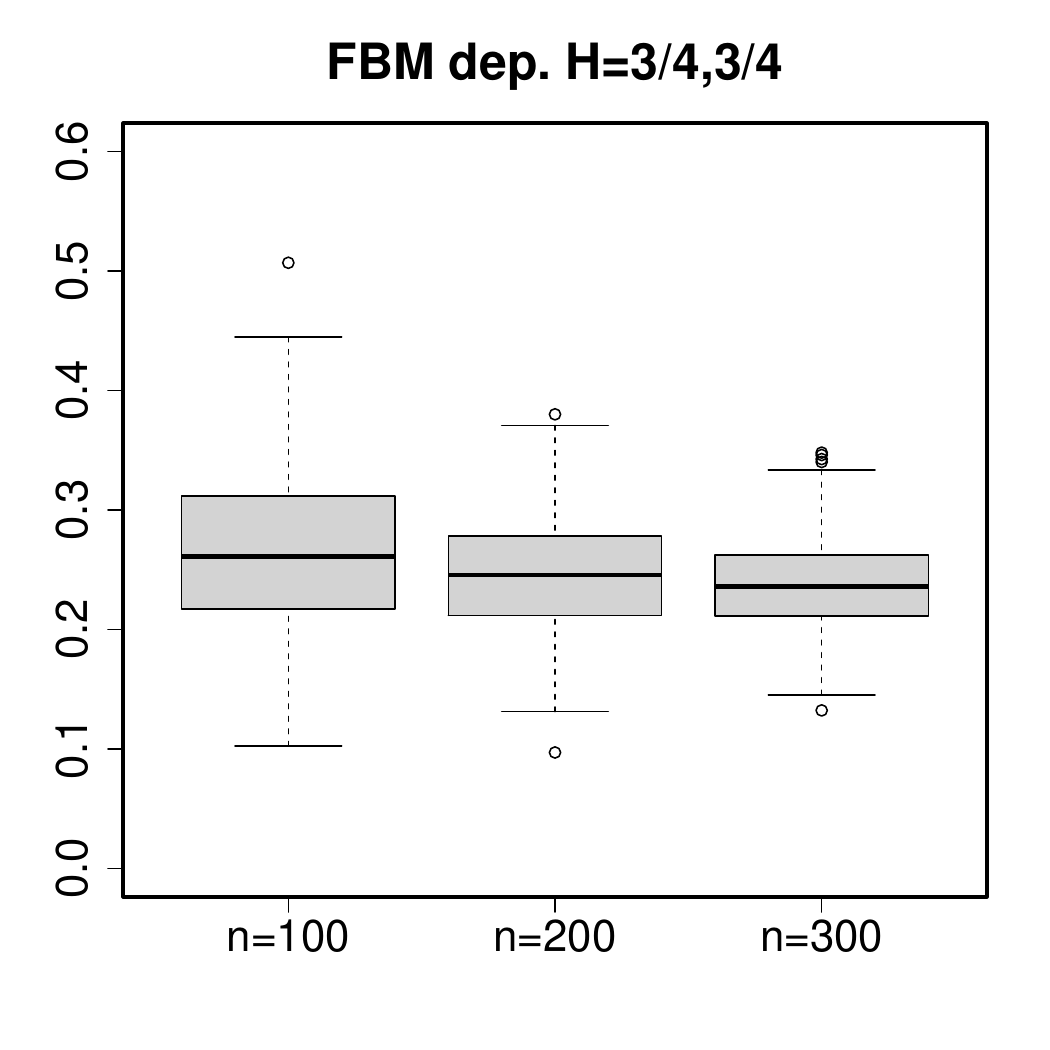}&
\includegraphics[width=.28\textwidth]{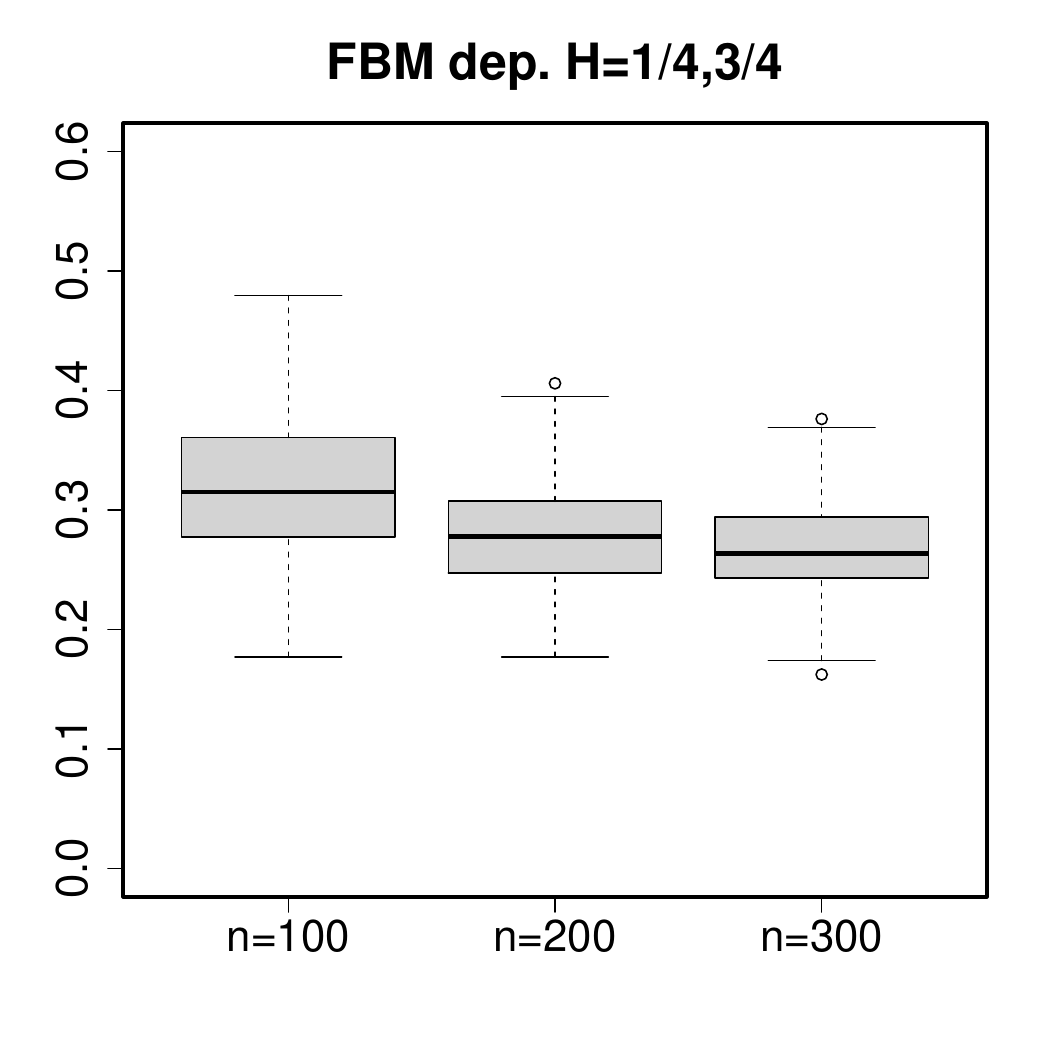}\\
\multicolumn{3}{c}{\includegraphics[width=.89\textwidth]{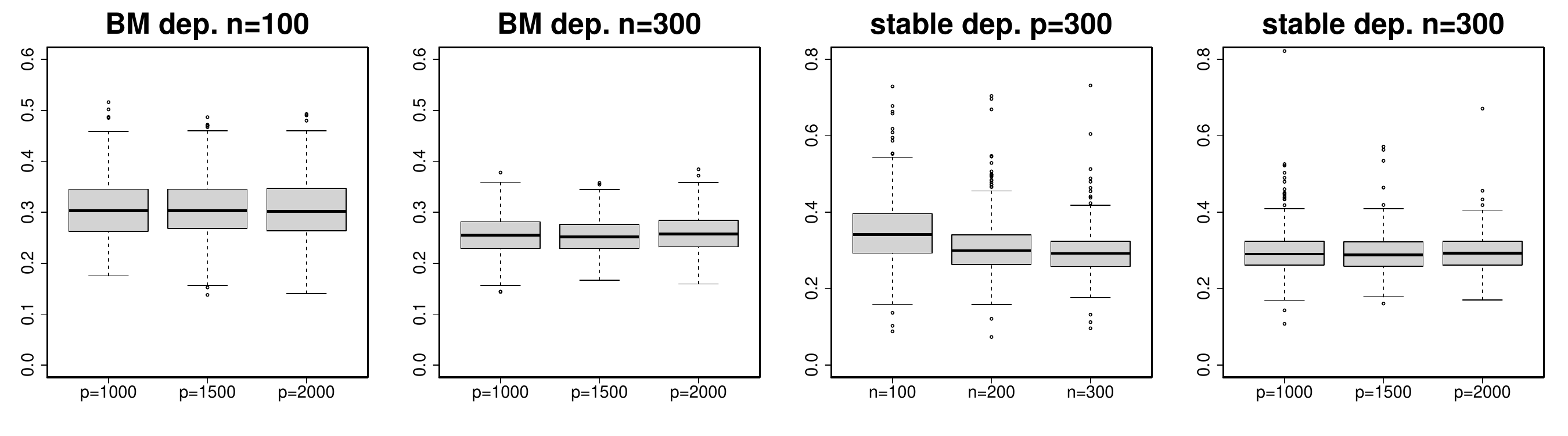}}\\
\end{tabular}
\bfig\label{fig:4}{\rm \small Boxplots for $R_n(X^{(p)},Y^{(p)})$ from 500 replications of 
dependent fB and $1.8$-stable sheets $X,Y$,
 respectively. We consider independent copies of $(X,Y)$ at a random Poisson $N_p$
number (with parameter $p$) of iid uniformly distributed locations $(\bfU_i)$
on $(0,1)^2$. 
The parameters $H_1$ and $H_2$, the values $n,p$ and the dependence parameter
$\rho=0.5$
are the same as for the corresponding graph at the same location 
in Figure~\ref{fig:3}.}\efig
\end{figure}
For the simulation of fB sheets we calculate their covariance function
\begin{align*}
 \cov(B^H(\bfs), B^H( \bft))=\prod_{i=1}^d \frac{1}{2}(
 |s_i|^{2H_i}+|t_i|^{2H_i}-|s_i-t_i|^{2H_i}),\qquad \bfs,\bft\in [0,1]^d\,,
\end{align*}
 at the lattice points and use the multivariate Gaussian random generator
 in R (mvfast) while for Brownian (this is the case $H_1=H_2=0.5$) 
and stable sheets we use the 
independent and stationary
 increment property: for each cell in $[0,1]^2$  we draw 
properly scaled independent Gaussian or stable 
 random variables and sum up, starting from the
 origin until we reach the boundary of $[0,1]^2$. (We generated Brownian sheets
with both methods, leading to very similar results.)   
\par  
Figure \ref{fig:1} shows boxplots for $R_n(X^{(p)},Y^{(p)})$ based on simulations 
from independent  fields $X,Y$ on the described lattice. 
In the top and middle rows, $q=30$, hence $p=900$, and $X,Y$ are fB sheets 
for different choices of $H_1$ and $H_2$. We see the
 influence of the smoothness of the sample paths: the larger $H_i$, the
 smoother the sample paths and the closer $R_n(X^{(p)},Y^{(p)})$ to
 zero. In the bottom row we simulate from  Brownian and $1.8$-stable 
sheets and examine the effect of increasing $n\in \{100,200,300\}$ 
for fixed $q=100$ and of increasing  $q\in \{100,200,300\}$ for fixed $n=300$.
An increase of $n$ apparently improves the performance of $R_n(X^{(p)},Y^{(p)})$:
the larger $n$ the closer it is to zero. On the other hand, if we fix $n$
one hardly sees a change for different values of $p=q\times q$. 
It is surprising for us that the sample distance correlation for independent
$1.8$-stable sheets outperforms the corresponding sample distance correlation 
for independent fB sheets: one detects independence between $X$ and $Y$
already for medium sample sizes.
\par
In Figure \ref{fig:2} we confront the results of Figure~\ref{fig:1} 
with the sample distance correlation for some dependent $X,Y$, again on the lattice. We consider iid fB sheets $X,X'$ and define $Y\eqd X$ by
\beam\label{def:depBsheets}
Y=\rho\, X +(1-\rho^{2})^{1/2}\, X'\,, \qquad \rho\in (0,1)\,.
\eeam
This choice of $X,Y$ leads to the correlation $\rho$ between $X$ and $Y$. In Figure \ref{fig:2} we choose $\rho=0.5$.
We present boxplots for the same choices of $H_1$ and $H_2$ 
as in the top and middle rows of Figure~\ref{fig:1}. 
The bottom row graphs  are based on dependent Brownian/$1.8$-stable sheets  
for the same $n$ and $p$ as at the bottom of  Figure \ref{fig:1}. 
We consider iid $1.8$-stable symmetric \levy\ sheets $X,X'$ and define $Y$ by
\beam
\label{def:depstablesheets}
Y=\rho\, X +(1-\rho^{1.8})^{1/1.8} X'\,,\qquad \rho\in (0,1)\,.
\eeam
Then in particular,  $X\eqd Y$. In  Figure~\ref{fig:2} we choose $\rho=0.5$.
The graphs in Figure~\ref{fig:2} are in stark contrast to 
those in Figure~\ref{fig:1}; they clearly  point at  the dependence
of $X,Y$. Again, the value of $p$ seems irrelevant.
\par
We also examine boxplots in the random location setting. 
In the lattice case in the top and middle 
graphs of Figures \ref{fig:1} and \ref{fig:2} we had $p=900$ points in $[0,1]^2$. 
 In the random location case
we choose uniform locations $\bfU_i$ on $[0,1]^2$ whose number $N_p$ is
Poisson with parameter $p=1000$, i.e., on average there are 1000 points.
In Figures~\ref{fig:3} and \ref{fig:4} we keep the same parameters and sample sizes
as in the previous two figures. 
In Figure~\ref{fig:3} we illustrate the case of independent $X,Y$.
Comparing Figures \ref{fig:3} and \ref{fig:1}, we
observe similar finite sample behavior: the sample size $n$ is more relevant for
convergence to zero than the number $N_p$ and the random locations. The bottom graphs
in Figure~\ref{fig:3} look less convincing (the median of the boxplot
is higher than in the lattice case) but this is due to the fact that 
we do not have the information from 
$100\times 100=10000$ locations but only from $N_p\approx 1000$.
(The choice of a Poisson variable $N_p$ with parameter $p=10000$ leads to 
a complexity which we could not handle on our laptops.)
Figure~\ref{fig:4} corresponds to the setting of dependent $X,Y$ in 
Figure~\ref{fig:2} with random locations. The graphs in both figures show quite
convincingly the difference between dependence and independence of $X$ and $Y$.
There is one significant difference to Figures~\ref{fig:1} and \ref{fig:2}:
the \ds\ of $R_n(X^{(p)},Y^{(p)})$ in Figures~\ref{fig:3} and \ref{fig:4}
is more spread than in the lattice case. This is due to the additional
uncertainty of the random locations.

\subsection{Size and power  of the distance correlation test}
\label{subsxec:simulate:bootstraptest}
We illustrate the performance of 
the bootstrap procedure for the test for independence 
based on distance correlation in the cases of fixed locations on a lattice and
of randomly scattered locations. We focus on independent pairs $X,Y$,
Brownian or $1.8$-stable sheets.  
Given a sample
$(X_1^{(p)},Y_1^{(p)}),\ldots,(X_n^{(p)},Y_n^{(p)})$,  
we draw $500$ bootstrap samples. From each bootstrap sample
we calculate the sample distance correlation and 
from the corresponding bootstrap 
\ds\ the $(1-\xi)$-quantile $q^\ast_{1-\xi}$. Finally, we verify whether 
\beam\label{eq:size}
R_n(X^{(p)},Y^{(p)}) \ge
q^\ast_{1-\xi}\,.
\eeam
Then we repeat this procedure $M\in \{500,1000\}$ times and count the 
successes of \eqref{eq:size}.
\par
In Table \ref{tab:sizel} we choose $q=100$ in the lattice case and observe 
the empirical rejection rates for independent Brownian and 
$1.8$-stable sheets $X,Y$:
each cell of the table corresponds to a given sample size $n$, test level
$\xi$ and iteration number $M$. 
In Table~\ref{table:powerl} we consider the simulation results
in the lattice case for dependent standard Brownian sheets 
with correlation $\rho\in (0,1)$ and $M=500$, where the correlation $\rho$ 
is accomplished through \eqref{def:depBsheets}. 
We also consider dependent $1.8$-stable L\'evy sheets $X,Y$ 
with dependence parameter $\rho$ introduced 
in \eqref{def:depstablesheets}. 
In agreement with the theory, 
the rejection rates increase as $n$ and $\rho$ increase.

\setlength{\tabcolsep}{10pt}
\begin{table}[htpb]
\begin{center}
\begin{tabular}{|c||c|c|c|c||c|c|c|c|}
\hline & \multicolumn{4}{c||}{Brownian sheets} &
 \multicolumn{4}{c|}{1.8-stable sheets} \\
\hline
$M$ & \multicolumn{2}{c|}{$500$} & \multicolumn{2}{c||}{$1000$} &  \multicolumn{2}{c|}{$500$} & \multicolumn{2}{c|}{$1000$} \\
\hline
$n\setminus \xi$   & $0.1$ & $0.05$ & $0.1$ & $0.05$ & $0.1$ & $0.05$ & $0.1$ & $0.05$ \\
\hline
100  & 12.8 & 6.2 &  11.6  & 6.2 & 11.4 &  3.6 & 9.2& 2.6 \\
200  & 9.4 & 4.8 &  9.6  & 4.8 & 8.2 & 3.4 & 8.4 & 4.8 \\
300  & 8.8 & 4.4 & 11.6 & 6.0  & 9.2 & 3.8 & 7.0& 3.2  \\
400  & 12.8 & 7.0 & 12.2 & 7.0  & 8.6 & 4.2 & 9.6 & 4.2 \\
\hline
\end{tabular}
\end{center}
\btab 
\label{tab:sizel} {\rm\small {\bf Bootstrap size of $n\,R_n(X^{(p)},Y^{(p)})$: lattice case.} Empirical rejection rates of bootstrap test based on $nR_n(X^{(p)},Y^{(p)})$ for independent Brownian and $1.8$-stable sheets  $X,Y$ with $M=500,1000$ iterations. We choose  $q=100$.}
\etab 
\end{table}
Table~\ref{tab:sizer} shows the empirical rejection rates 
for independent Brownian and 
$1.8$-stable sheets $X,Y$ in the random observation setting. 
We generate the Poisson number $N_p$ and iid uniform locations
$(\bfU_j)_{j\le N_p}$. To generate the discretized 
Brownian sheets we calculate the correlation at the given random lattice and
 use the multivariate Gaussian random generator in R (mvfast) and 
choose $p=1000$. 
For generating a stable sheet at random locations, we first proceed as 
in the lattice case, generating independent stable random variables for 
 each cell of the random lattice, where the lattice is constructed  by 
cutting $[0,1]^2$ at all marginal points of $(\bfU_j)$. 
Then we calculate the sheet at $\bfU_j$ by summing the cells, starting 
from the origin. The computational complexity for doing this is high 
and therefore
we restrict ourselves to the smaller Poisson parameter $p=500$. 
As discussed in 
Section 4 
the bootstrap of $Z^{(p)}=(X^{(p)},Y^{(p)})$ is conducted conditionally on $N_p$. In Tables~\ref{tab:sizer} and \ref{table:powerr} we present results for $N_{500}=492$ and $N_{1000}=982$. 
We have examined the bootstrap procedure for
several realizations of $N_p$ with $p=500,1000$, but we did not find 
differences in the performance.
\par
In Table \ref{table:powerr} we illustrate 
the power of the test in the random observation case. 
The dependence parameter $\rho$ and the sample size $n$ 
are the same as in Table~\ref{table:powerl}. 
Again, $p=1000$ ($p=500$) are the Poisson parameters of  $N_p$ 
for Brownian (stable) sheets.
When comparing Tables~\ref{table:powerr} and \ref{table:powerl}, 
the empirical powers are quite similar, despite the different models for  
$(X^{(p)},Y^{(p)})$.

{\small 
\setlength{\tabcolsep}{8pt}
\begin{table}[htbp]
\begin{center}
\begin{tabular}{|c||c|c|c|c|c|c||c|c|c|c|c|c|}
\hline & \multicolumn{6}{c||}{Dependent Brownian sheets}
 &\multicolumn{6}{c|}{Dependent 1.8-stable \levy\ sheets} \\
\hline $n$ & \multicolumn{2}{c|}{$100$} & \multicolumn{2}{c|}{$200$} &  
\multicolumn{2}{c||}{$300$} &   \multicolumn{2}{c|}{$100$} &
 \multicolumn{2}{c|}{$200$} & \multicolumn{2}{c|}{$300$} \\
\hline $\rho\setminus \xi$ & $0.05$ &$0.1$ & $0.05$ & $0.1$ & $0.05$ &$0.1$ &$0.05$ &$0.1$  &$0.05$ &$0.1$ &$0.05$ &$0.1$ \\
\hline
0.1  & $15.2$ & $29.2$  & $28.7$  & $38.2$ & $36.8$ & $51.0$ & $14.2$
 & $25.6$  & $27,2$
  & $42.2$ & $40.0$  & $53.6$ \\
0.2  & $50.0$  & $65.0$ & $80.6$  & $87.6$  &$93.2$ & $96.4$  & $46.4$
 & $63.2$  & $76.0$
  & $86.4$ & $87.6$ & $92.8$ \\
0.3  & $85.8$  & $93.6$  & $98.6$  & $99.0$  & $100$ & $100$  & $79.4$
 & $86.8$  & $94.0$
  & $94.8$ & $95.6$ & $96.8$  \\
0.4  & $98.8$  & $99.6$  & $100$  & $100$  & $100$  & $100$  & $90.2$
 & $93.6$
  & $95.8$  & $96.8$ & $97.0$  & $98.0$ \\
0.5  & $100$ & $100$ & $100$ & $100$ & $100$ & $100$ & $94.0$ & $95.4$
 & $96.8$
 & $97.4$ & $98.0$ & $98.6$ \\
\hline
\end{tabular}
\end{center}
\btab\label{table:powerl}
{\small \rm {\bf Bootstrap power of $nR_n(X^{(p)},Y^{(p)})$ with dependent stable sheets: lattice case.}
Empirical rejection rates of bootstrap test based on
 $nR_n(X^{(p)},Y^{(p)})$ for dependent Brownian sheets and
 $1.8$-stable sheets.   We choose $q=100$.
 In each cell of the table the rejection rate is calculated from
 $M=500$ iterations.}
\etab
\end{table}
{\small
\setlength{\tabcolsep}{12pt}
\begin{table}[h]
\begin{center}
\begin{tabular}{|c||c|c||c|c|}
\hline & \multicolumn{2}{c||}{Brownian sheets} &
 \multicolumn{2}{c|}{1.8-stable sheets
} \\
\hline
$n$& $ \xi=0.1$ & $ \xi=0.05$ & $ \xi=0.1$ & $\xi=0.05$ \\
\hline
100  & 12.2  & 5.4  & 10.6 & 4.0 \\
200  &11.4 & 6.0 &  8.6  & 4.0 \\
300  &10.6 & 3.6 & 8.5   & 4.4 \\
400  & 8.8 & 5.6 & 9.0 & 3.8 \\
\hline
\end{tabular}
\end{center}
\btab\label{tab:sizer}
{\small \rm {\bf Bootstrap size of $nR_n(X^{(p)},Y^{(p)})$: random location case.}
Empirical rejection rates of bootstrap test based on $nR_n(X^{(p)},Y^{(p)})$ for
independent Brownian and $1.8$-stable sheets 
 $X,Y$ with $M=500$ iterations. There is a Poisson number $N_p$ of 
iid uniform locations on $(0,1)^2$, the Poisson parameter  is $p=1000$ ($p=500$)
for Brownian (stable) sheets.
}\etab 
\end{table}}
{\small
\begin{table}[h]
\begin{center}
\begin{tabular}{|c||c|c|c|c|c|c||c|c|c|c|c|c|}
\hline & \multicolumn{6}{c||}{Dependent Brownian sheets}
 &\multicolumn{6}{c|}{Dependent 1.8-stable sheets} \\
\hline $n$ & \multicolumn{2}{c|}{$100$} & \multicolumn{2}{c|}{$200$} &  
\multicolumn{2}{c||}{$300$} &   \multicolumn{2}{c|}{$100$} &
 \multicolumn{2}{c|}{$200$} & \multicolumn{2}{c|}{$300$} \\
\hline $\rho\setminus \xi$ & $0.05$ &$0.1$ & $0.05$ & $0.1$ & $0.05$ &$0.1$ &$0.05$ &$0.1$  &$0.05$ &$0.1$ &$0.05$ &$0.1$ \\
\hline
0.1  & $17.0$ & $27.0$ & $26.2$ & $37.2$  & $38.4$  & $50.2$ & 
 $14.0$ &
 $26.8$  & $26.2$ & $42.0$ & $39.8$  & $52.4$ \\
0.2  & $52.2$  & $67.2$  & $79.6$  & $88.4$ & $95.4$ & $98.8$ & 
 $47.0$ & $63.0$  &
$77.4$  & $85.4$  & $88.4$  & $94.4$ \\
0.3  & $88.2$  & $93.0$ & $99.0$ & $99.4$  & $100$  & $100$ &
 $81.6$ & $88.4$ & $92.6$ 
  & $94.6$  & $96.2$ & $97.4$ \\
0.4  & $99.4$ & $99.6$ & $100$ & $100$ & $100$ & $100$ & 
 $92.8$ & $94.4$
 & $95.6$ & $96.8$ & $97.8$  & $98.0$ \\
0.5  & $100$ & $100$ & $100$  & $100$ & $100$  & $100$  & $94.4$  & $95.4$ 
 & $96.8$
 & $97.4$ & $98.2$ & $98.2$ \\
\hline
\end{tabular}
\end{center}
\btab\label{table:powerr}
{\small \rm {\bf Bootstrap power of $nR_n(X^{(p)},Y^{(p)})$: random location case.}
Empirical rejection rates of bootstrap test based on $nR_n(X^{(p)},Y^{(p)})$ for
dependent Brownian and $1.8$-stable sheets. 
The number $N_p$ is Poisson distributed with 
  $p=1000$ ($p=500$) for Brownian (stable) sheets.
 In each cell of the table the rejection rate corresponds to
 $M=500$ iterations.} 
\etab
\end{table}} 

\section{ An application to Japanese meteorological data}\label{SecEmp}
We apply our results to Japanese meteorological data. 
We choose the 3 most fundamental factors: 
{\em temperature (temp), precipitation (prec) and wind speed (wind)} which have 
been observed for a long time and over a wide range of Japan. 
The data are available in various formats at the web-page of the Japanese Meteorological Agency \mbox{$\sc https://www.data.jma.go.jp/gmd/risk/obsdl$}.
Since daily data include many zeros and can be sparse, especially for precipitation, monthly average data are taken. From January 1980 to January 2021 we have 
$493$ monthly data at $783$ observation stations. There exist more such points in Japan. They are, however,  subject to 
problems such as change of position, many missing data, or only precipitation is observed. We removed these points, but still have plenty of points left; 
see the red dots in the map of Japan in Figure \ref{fig:one}.
Missing values are observed at less than $30$ stations, corresponding to
less than $10$ out of $493$ months in total. A missing value at a station is replaced 
by the average value at the station.  
\begin{figure}[h!]
\begin{tabular}{c}
\begin{minipage}{0.5\hsize}
\begin{center}
\includegraphics[width=55mm]{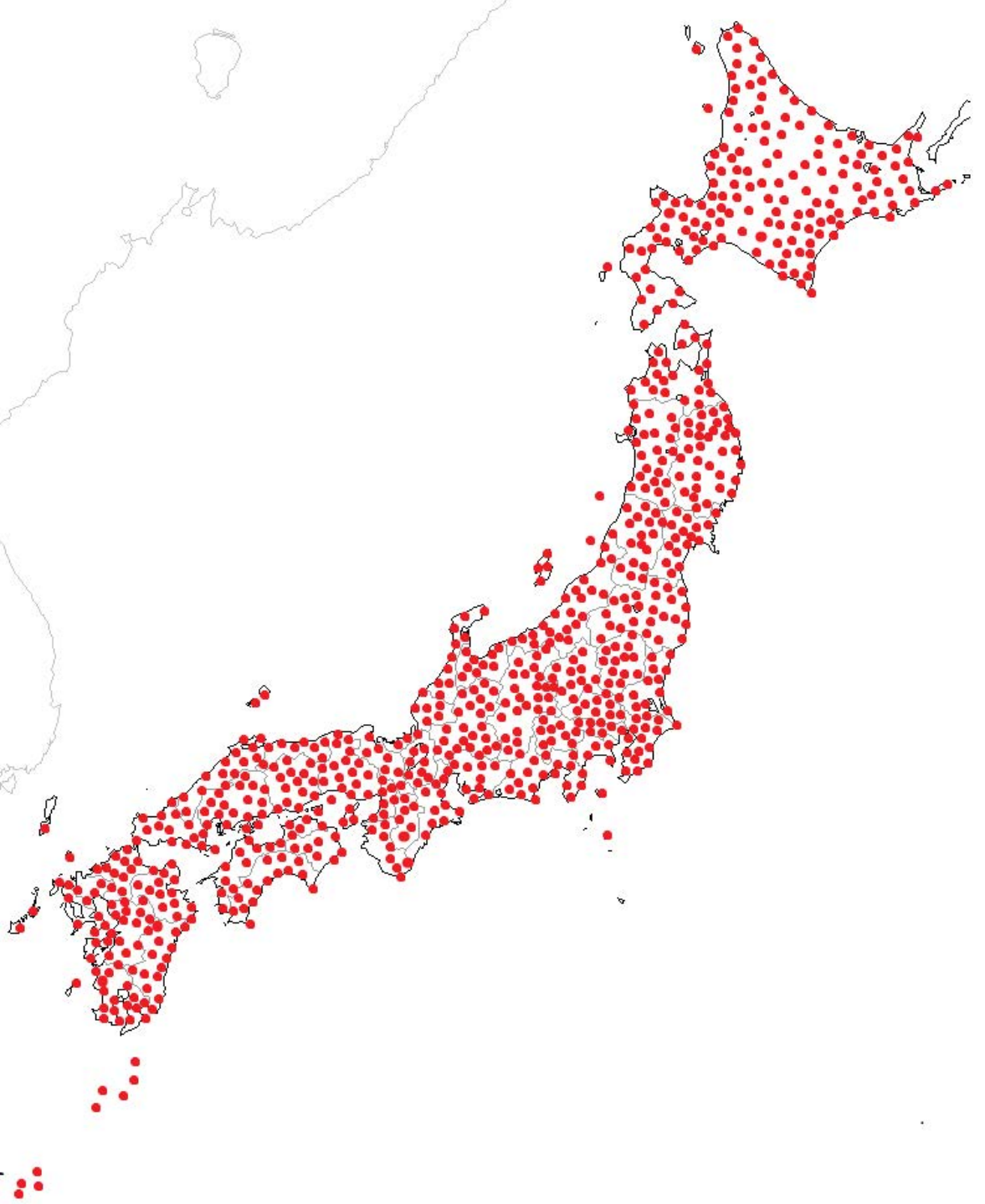}
\end{center}
\bfig\label{fig:one}
{\rm \small Distribution of $783$ meteorological observation points scattered all over Japan.}\efig
\end{minipage}
\begin{minipage}{0.5\hsize}
\begin{center}
\includegraphics[width=62mm]{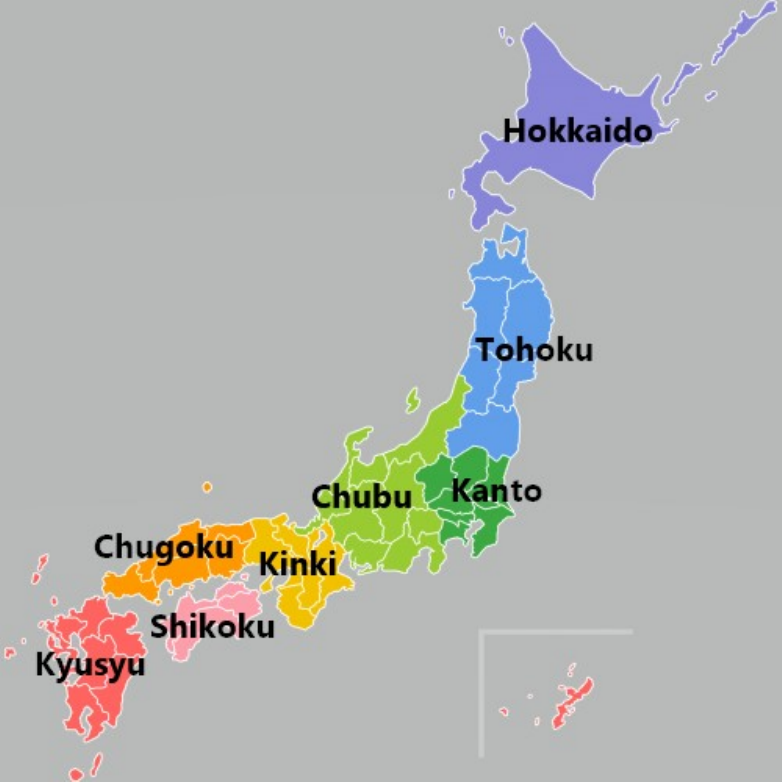}
\end{center}
\bfig\label{fig:two}{\rm \small Japanese $8$ regions}\efig
 \end{minipage}
\end{tabular}
\end{figure}

In our analysis we focus on the distance correlations for the pairs (prec \& temp), (prec \&  wind), (temp \&  wind) denoted by $R_{pt}$, $R_{pw}$, $R_{wt}$. 
In a preliminary analysis we conduct pair-wise independence tests between 
temp, prec and wind  
{\em at each station.} Here we use the classical sample 
distance correlation/correlation coefficients (Peasen, Spearman, and Kendall) between two components of a random vector.
At each station we use the time series of $493$ monthly observations and 
conduct bootstrap tests for pair-wise independence based on $1000$ resamples. 
We follow the procedure described in Section \ref{sec:simulations} with 
significance levels $\xi\in \{0.05,0.01\}$. For other correlation coefficient tests, we use the
``corr.test'' R package. The results are reported in 
Table~\ref{table:japan:each}. Each cell contains the number of rejections
of the hypothesis of  
pair-wise independence between (prec \& temp), 
(prec \& wind), (temp \& wind). 
The hypothesis of independence is overwhelmingly rejected at the majority of
stations while the distance correlation based test 
tends to detect dependence more often than other correlation tests.
{\small 
\setlength{\tabcolsep}{11pt}
\begin{table}[h!]
\begin{center}
\begin{tabular}{|c|c|c|c|c|c|c|}
\hline 
rejection number & \multicolumn{2}{c|}{prec \& temp} & \multicolumn{2}{c|}{prec \& wind}  & \multicolumn{2}{c|}{temp \& wind} \\ \hline 
 methods  $\setminus\, \xi$  & 0.05 & 0.01& 0.05 &0.01 &0.05 & 0.01 \\  \hline  
Peason Corr. & 734 & 720  & 599 & 524& 689& 654 \\
Spearman Corr. & 731  & 710  & 599  & 539 & 691 & 662  \\
Kendall Corr. & 730  & 710  & 599 & 538 & 689 & 661\\
Dist. Corr. & 782  & 773  & 698  & 635 & 769& 748\\\hline 
\end{tabular}
\end{center}
\btab\label{table:japan:each}
{\small\rm Rejection numbers of pair-wise independence for the 783 stations.}\etab
\end{table}
\par

\begin{figure}[h!]
\begin{tabular}{c}
\begin{minipage}{0.95\hsize}
\begin{center}
\includegraphics[width=80mm]{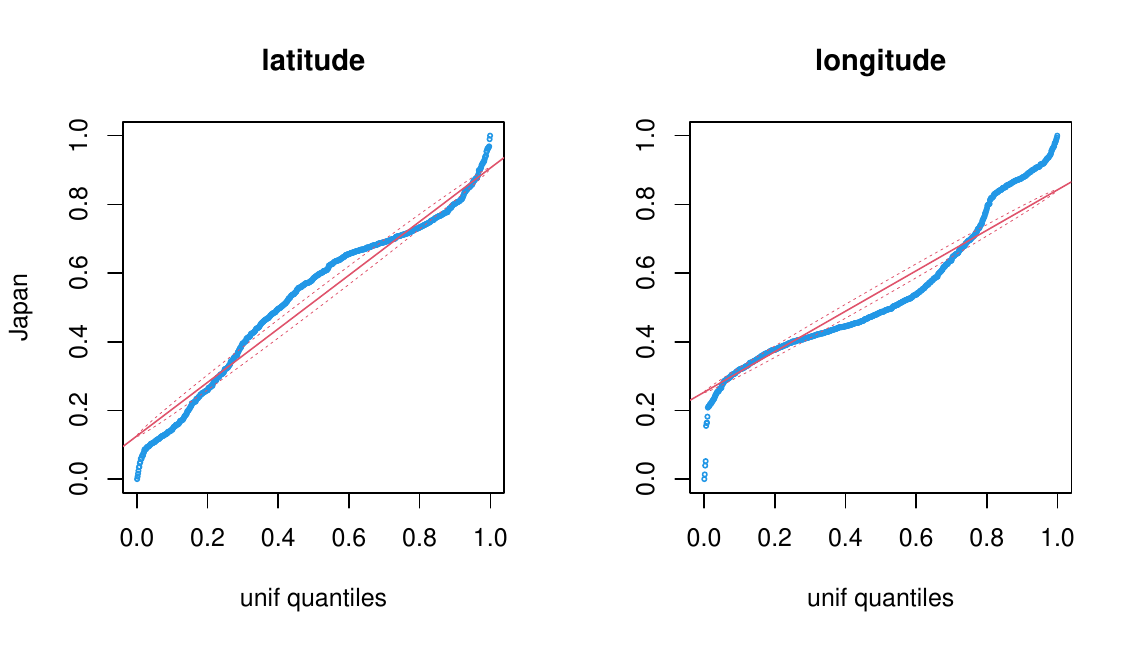}
\end{center}
\bfig\label{fig:three}
{\rm \small QQ-plots of uniform quantiles on $(0,1)$ against
the (re-scaled) latitude and longitude  
of all observation points in Japan. The red dotted lines indicate $95\%$ 
\asy\ confidence bands.}\efig
\end{minipage}
\end{tabular}
\begin{tabular}{c}
\begin{minipage}{0.95\hsize}
\begin{center}
\includegraphics[width=150mm]{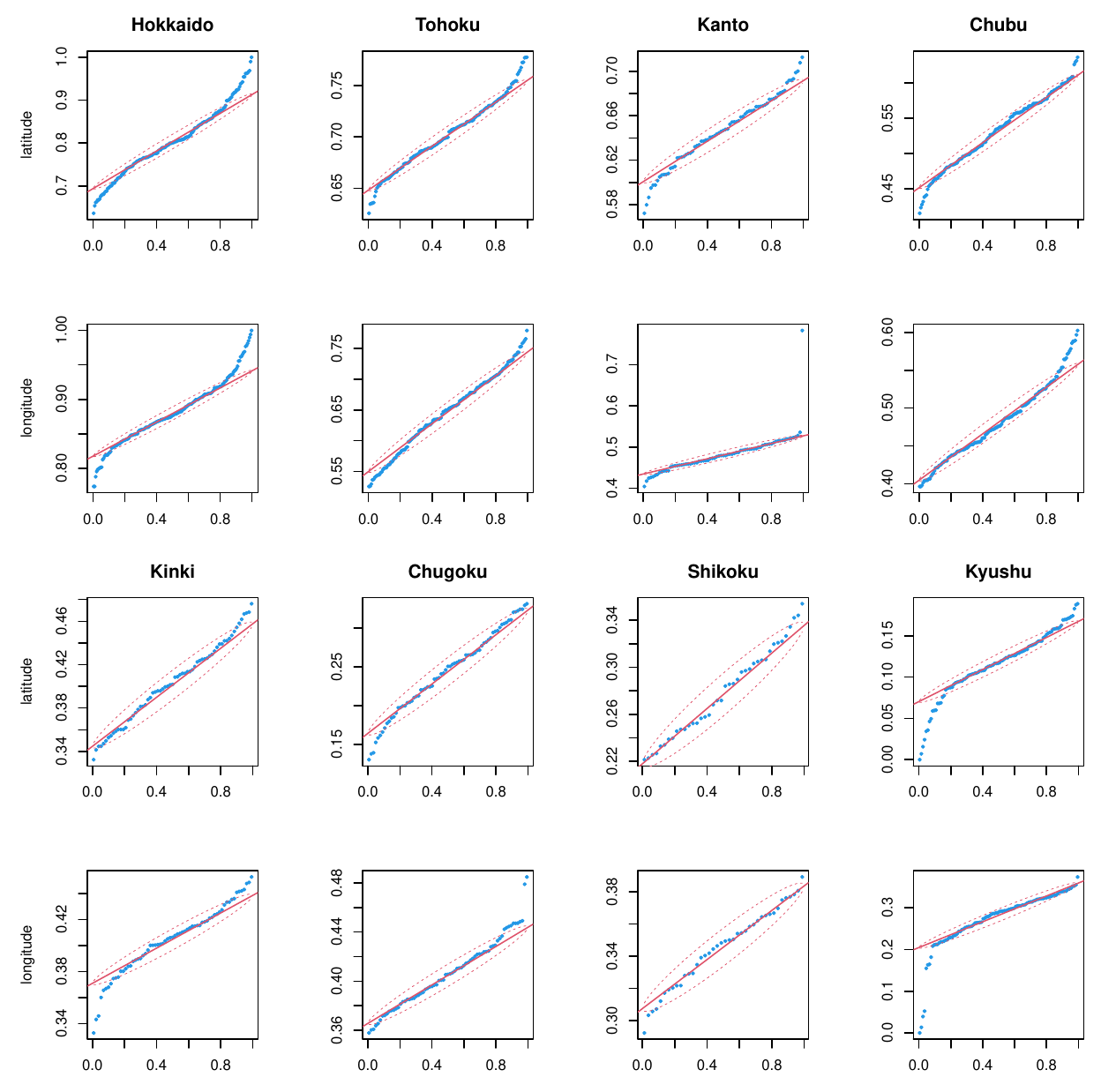}
\end{center}
\bfig\label{fig:four}
{\rm \small QQ-plots of uniform quantiles on $(0,1)$ 
against the (re-scaled) latitude and longitude  
of observation points in 8 Japanese regions. 
The red dotted lines indicate $95\%$ \asy\ confidence interval. 
}\efig
\end{minipage}
\end{tabular}
\end{figure}

\begin{figure}[h!]
\begin{tabular}{c}
\begin{minipage}{0.95\hsize}
\begin{center}
\includegraphics[width=150mm]{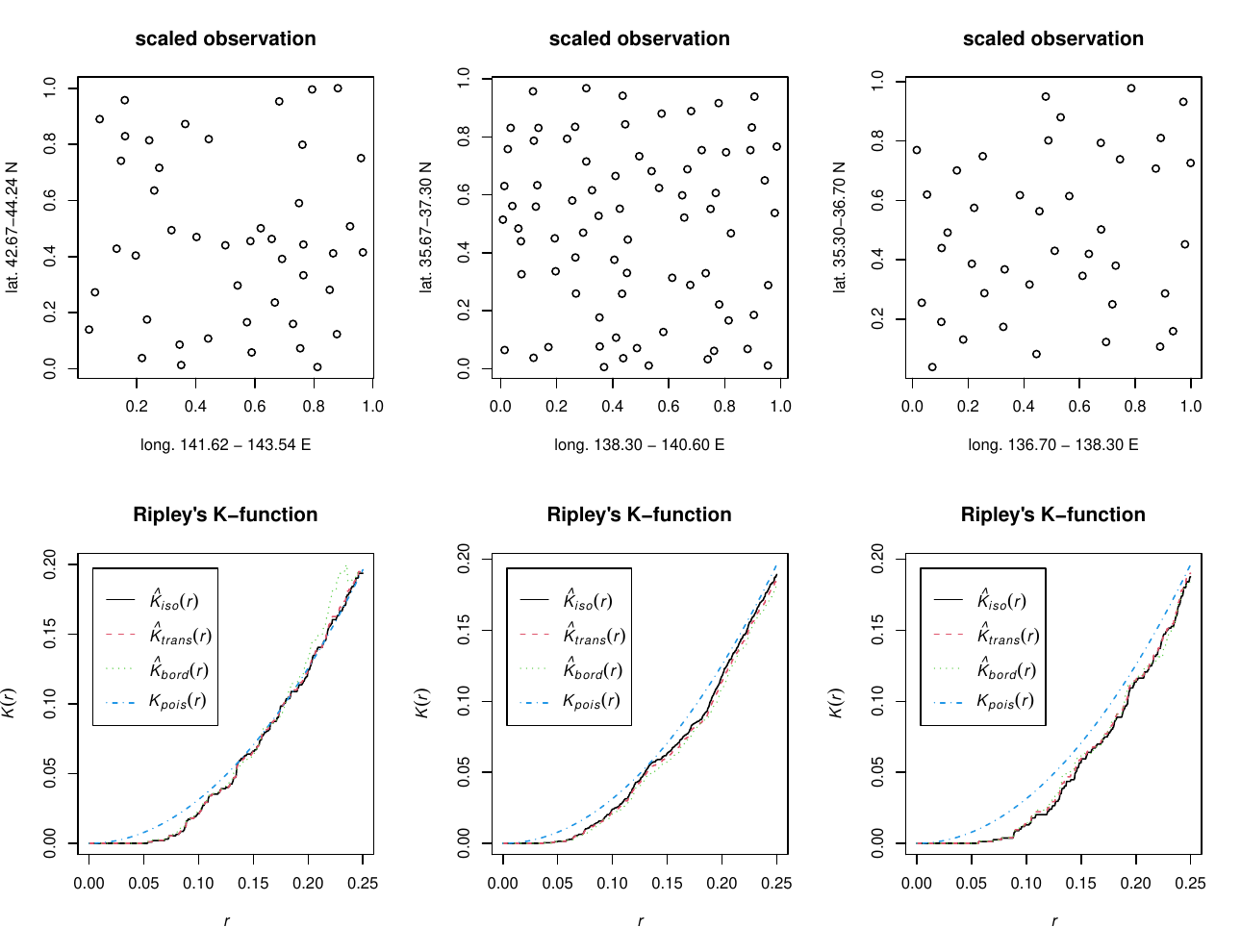}
\end{center}
\bfig\label{fig:five}
{\rm \small Ripley's $K$-function for three retangular regions
on the  Japanese islands, rescaled to the unit square.
Top: plots of observation points from specific ranges of latitude and longitude 
(indicated on the left and bottom sides 
of the graphs). Bottom: Ripley's $K$-functions of the corresponding regions. If data are randomly scattered the curves must be close to their limit
$r^2$ (light blue curves) as the number of points increases. The other tree curves, isotropic, translate and border are edge corrected versions.
Notice that the estimation of K is affected by edge effects since points outside the window are unobservable (see \cite{kest:2022}). 
}\efig
\end{minipage}
\end{tabular}
\end{figure}
We interpret the monthly meteorological observations as realizations of 
a random field on the area of Japan.  
We regard the monthly data as iid random field observations and 
conduct independence tests on them. Figure~\ref{fig:one} shows quite convincingly
that the stations are scattered randomly all over Japan, so the assumption of
uniformly distributed locations is plausible. 
For further confirmation of the uniformity of the locations
we use some exploratory statistical tools.
For the marginal uniformity of all observations on Japan we considered
QQ-plots of the (re-scaled to $(0,1)$) latitude and longitude of all observations 
against the uniform \ds ; see Figure~\ref{fig:three}. 
The fit is not good due to the distorted diagonal shape of the 
Japanese islands, and there are large mountain areas and big lakes
with observation points. 
The marginal fit improves when considering 
QQ-plots in particular regions; see 
Figure~\ref{fig:four}. 
For checking bivariate uniformity we applied Ripley's $K$-functions \cite{ripley:1979} (see also \cite{kest:2022})
which are a convenient tool 
for measuring clustering and randomness of spatial data;
more sophisticated methods were described in  Section~\ref{subsec:randomlocation}.
We chose rectangular areas on the Japanese islands as wide as possible such that
they contain as many as possible points.  Then we re-scaled the rectangles to
unit square size. Three such regions are selected; see Figure ~\ref{fig:five}. 
In view of the plots and shapes of the $K$-function 
we may accept the assumption of uniformly distributed locations.
Therefore, we follow   
the approach of {\em random field at random locations} from 
Sections~\ref{subsec:randomlocation} and \ref{sec:randomlocation}.
We define distance as in  \eqref{eq:lln} and rely on 
Theorem \ref{thm:main} for the theoretical background on the independence test. 
The tests are based on the bootstrap procedure introduced 
in the simulation study (Section \ref{sec:simulations}); the theory for the bootstrap 
was developed in Section~\ref{asymptotic}.
\par
The first line in Table~\ref{table:japandc} 
yields the pair-wise sample distance correlations for the whole of Japan.
They clearly hint at dependence between the 3 factors, and this is also confirmed by bootstrap
tests of the sample distance correlation at different significance levels; see the right part of the table.
Next we study dependence locally in the distinct regions of Japan
and at different seasons of the year.
\par
Most parts of Japan are classified 
as humid subtropical climate area. However, the country 
stretches out from the far north to the far south, and the weather is
strongly influenced by the seasonal monsoon wind. The changes caused by the 
4 seasons are significant; cf. \cite{jmag:2021}. 
Therefore we also test for independence at regional level in each season.
The  seasonal samples consist of $4\times 123=492$ months, discarding the
last month.
Seasons are defined as common in the Northern hemisphere: March--May,  June--August, September--November,
December--February.  
\par
Table~\ref{table:japandc} provides the 
pair-wise (distance) correlations for annual and quarterly 
meteorological data for the  whole of Japan (Ja, 783 stations) as well as 
$8$ main regions (indicated in Figure~\ref{fig:two}):  Hokkaido (Ho, 159), 
Tohoku (To, 137), Kanto (Ka, 73), Chubu (Chb, 140), Kinki (Ki,65),  
Chugoku (Chg, 71), Shikoku (Shi, 40) and Kyusyu (Kyu, 98).
After the names of the regions and seasons
the pair-wise distance correlations 
$R_{\cdot}$ and the 
results of the independence tests based on the bootstrap for
$R_{\cdot}$ are presented for $1000$ resamples, following
the previous section. 
{\small 
\setlength{\tabcolsep}{14pt}
\begin{table}[h!]
\begin{center}
\begin{tabular}{|c|l|rrr|ccc|}
\hline 
& (Dist.) Corr.  & $R_{pt}$ & $R_{pw}$ & $R_{tw}$  &$R_{pt}$ & $R_{pw}$ & $R_{tw}$   \\ \hline 
Ja & Year& .567 & .477 & .693 & $\bigcirc$ &$\bigcirc$ & $\bigcirc$  \\
&Spring & .220 & .272 & .354 & $\bigcirc$&$\bigcirc$&$\bigcirc$ \\
&Summer & .329 & .287 & .208 & $\bigcirc$&$\bigcirc$&$\bigcirc$ \\
&Autumn & .493 & .391 & .543 & $\bigcirc$&$\bigcirc$&$\bigcirc$ \\
&Winter & .211 & .327 & .192 & $\bigcirc$&$\bigcirc$&$\bigcirc$ \\ \hline 
Ho& Year & .404 & .272 & .655 & $\bigcirc$ & $\bigcirc$& $\bigcirc$ \\
&Spring & .226 & .217 & .351 & $\bigcirc$&$\bigcirc$&$\bigcirc$  \\
&Summer & .292 & .205 & .201 & $\bigcirc$&$\bigcirc$&$\bigcirc$ \\
&Autumn & .329 & .325 & .644 & $\bigcirc$&$\bigcirc$&$\bigcirc$ \\
&Winter & .259 & .309 & .140 & $\bigcirc$&$\bigcirc$&$\bigcirc$ \\ \hline 
To& Year & .398 & .338 & .668 & $ \bigcirc$&$ \bigcirc$&$ \bigcirc$ \\
&Spring & .121 & .146 & .297 & $\bigcirc$& $\times$ &$\bigcirc$  \\
&Summer & .165 & .222 & .233 & $\bigcirc$& $\bigcirc$ & $\bigcirc$ \\
&Autumn & .407 & .324 & .524 & $\bigcirc$& $\bigcirc$&$\bigcirc$  \\
&Winter & .228 & .240 & .111 & $\bigcirc$& $\bigcirc$& $\bigcirc$ \\ \hline 
Ka& Year & .471 & .308 & .539 & $ \bigcirc$&$ \bigcirc$&$ \bigcirc$  \\
&Spring & .125 & .154 & .344  & $\bigcirc$&$\bigtriangleup$ & $ \bigcirc$  \\
&Summer & .190 & .142 & .158  & $\bigcirc$&$\times$&$\bigcirc$  \\
&Autumn & .387 & .321 & .457 & $\bigcirc$&$\bigcirc$&$\bigcirc$ \\
&Winter & .086 & .176 & .202 & $\bigcirc$&$\bigcirc$&$\bigcirc$  \\ \hline 
Chb& Year & .394 & .366 & .679 & $ \bigcirc$&$ \bigcirc$&$ \bigcirc$  \\
&Spring & .159 & .184 & .371 & $\bigcirc$&$\bigcirc$&$\bigcirc$ \\
&Summer & .153 & .201 & .120 & $\bigcirc$&$\bigcirc$&$\bigcirc$ \\
&Autumn & .379 & .256 & .400 &$\bigcirc$&$\bigcirc$&$\bigcirc$  \\
&Winter & .131 & .308 & .182 &$\bigcirc$&$\bigcirc$&$\bigcirc$  \\ \hline 
Ki& Year& .429 & .211 & .426 &$ \bigcirc$&$ \bigcirc$&$ \bigcirc$  \\
&Spring & .144 & .156 & .301 &$\bigcirc$&$\bigtriangleup$&$\bigcirc$ \\
&Summer & .204 & .188 & .133 &$\bigcirc$&$\bigtriangleup$&$\bigcirc$  \\
&Autumn & .402 & .173 & .190 &$\bigcirc$&$\bigcirc$&$\bigcirc$  \\
&Winter & .135 & .233 & .208 &$\bigcirc$&$\bigcirc$&$\bigcirc$  \\ \hline 
Chg& Year & .311 & .225 & .514 &$\bigcirc$&$\bigcirc$&$\bigcirc$  \\
&Spring & .112 & .127 & .221 & $\bigcirc$&$\bigtriangleup$&$\bigcirc$ \\
&Summer & .165 & .151 & .161 & $\bigcirc$&$\bigcirc$&$\bigcirc$  \\
&Autumn & .322 & .179 & .303 &$\bigcirc$&$\bigcirc$&$\bigcirc$ \\
&Winter & .082 & .194 & .182 & $\bigtriangleup$&$\bigcirc$&$\bigcirc$ \\ \hline 
Shi& Year &.438 & .262 & .542 &$\bigcirc$&$\bigcirc$&$\bigcirc$ \\
&Spring & .146 & .193 & .341 &$\bigcirc$ &$\bigcirc$ &$\bigcirc$ \\
&Summer & .196 & .270 & .157 &$\bigcirc$ &$\bigcirc$&$\bigcirc$ \\
&Autumn & .414 & .195 & .233 & $\bigcirc$&$\bigcirc$&$\bigcirc$ \\
&Winter & .194 & .141 & .261 &$\bigcirc$ &$\bigcirc$&$\bigcirc$ \\ \hline 
Kyu& Year & .460&.211 & .354 &$\bigcirc$&$\bigcirc$&$\bigcirc$   \\
&Spring & .177 & .184 & .274 &$\bigcirc$ &$\bigcirc$&$\bigcirc$ \\
&Summer & .338 & .181 & .134 &$\bigcirc$&$\bigcirc$&$\bigcirc$ \\
&Autumn & .406 & .173 & .172 &$\bigcirc$&$\bigcirc$&$\bigcirc$ \\
&Winter & .193 & .161 & .189 &$\bigcirc$ &$\bigcirc$&$\bigcirc$ \\ \hline 
\end{tabular}
\end{center}
\btab\label{table:japandc}{\rm \small {\bf Correlation and distance correlation of Japanese meteorological data.}
Pair-wise (distance) correlations for annual and quarterly 
meteorological data of the whole of Japan 
and $8$ main regions. Each row presents the corresponding 
region and seasons, followed by $3$ cells with the 
sample distance correlations and $3$ cells with the results of the 
independence tests. The symbols
$\bigcirc,\bigtriangleup,\times$ stand for rejection of the independence hypothesis at both 
significance levels $\xi=0.05,\,0.01$, only at $\xi=0.05$, 
no rejection, respectively.}\etab
\end{table}}
We can extract several implications from the test results for 
the distance correlation and its magnitude which are also in agreement 
with the research of meteorologists; cf. \cite{jmao:2021}.
The pair-wise distance correlations of the $3$ factors in the whole of Japan  
clearly imply dependence of all weather patterns through the $4$ seasons.
Especially,  the values in autumn are rather high.  
The situation is somewhat different  if one  looks at each region. 
The $3$ factors still keep stable pair-wise dependence in autumn and 
winter in most regions  (an exception is the winter of Chugoku area), 
though the values of $R_{pt}$ in winter are 
rather small in the southern region. 
\par
The autumn values are relatively high in most regions, in agreement with
the values for the whole country.
Possible causes include the typhoon (tropical cyclones) 
attacks in August and October,  yielding a lot of precipitation 
and strong wind speed together with changes in temperature. 
Rain fronts in autumn can be another reason. 
While (prec \& temp) and (temp \& wind) indicate dependence in all areas 
through the $4$ seasons, 
the relation between (prec \& wind) shows independence in several regions, 
especially in spring and summer. 
One reason is that the {\em baiu front}, causing the  rainy season, 
is stationary and moves very slowly 
around in May--June. This fact may violate the dependence 
relations between the $3$ factors. 


\section{Proof of Proposition~\ref{prop::asymptotics1}}\label{sec:proofprop41}
\setcounter{equation}{0}
Write for any Riemann square-integrable random field $Z$ on $B$
independent of $(N^{(p)})_{p>0}$, and any $\beta\in (0,2]$,
\beao
D_{p}^{(\beta)}(Z)=\|Z^{(p)}\|_2^\beta -\|Z\|_2^\beta
= \Big(N_p^{-1} \int_B Z^2  \,dN^{(p)}\Big)^{\beta/2}-
\Big( \int_B Z^2(\bfu)  \,d \bfu\Big)^{\beta/2}\,.
\eeao
Then we have
\beam\label{eq:1a}
|D_p^{(\beta)}(Z)| \le |D_p^{(2)}(Z)|^{\beta/2}\,.
\eeam
We start by proving \eqref{app1distacecov1}. We have
\beam\label{eq:may8f}
 T_{n,\beta}(X^{(p)},Y^{(p)}) -T_{n,\beta}(X,Y)=I_1+I_2-2\,I_3, 
\eeam
where 
\beao
I_1 &=& \dfrac{1}{n^2} \sum_{k,l=1}^n
 \Big(\|X_k^{(p)}-X_l^{(p)}\|_2^\beta \,\|Y_k^{(p)}-Y_l^{(p)}\|_2^\beta
 -\|X_k-X_l\|_2^\beta\, \|Y_k-Y_l\|_2^\beta\Big)\,, \\
I_2 & =& \dfrac{1}{n^2} \,\sum_{k,l=1}^n \|X_k^{(p)}-X_l^{(p)} \|_2^\beta\, 
\dfrac{1}{n^2}\,
 \sum_{k,l=1}^n \|Y_k^{(p)}-Y_l^{(p)} \|_2^\beta- \dfrac{1}{n^2}\, 
\sum_{k,l=1}^n \|X_k-X_l\|_2^\beta \dfrac{1}{n^2} \,
\sum_{k,l=1}^n \|Y_k-Y_l\|_2^\beta, \\
I_3 &=&\dfrac{1}{n^3} \sum_{k,l,m=1}^n\Big(
 \|X_k^{(p)}-X_l^{(p)}\|_2^\beta\, \|Y_k^{(p)}-Y_m^{(p)}\|_2^\beta-\|X_k-X_l\|_2^\beta \|Y_k-Y_m\|_2^\beta\Big). 
\eeao
We have
\beao
 |I_1| &\le& \frac{1}{n^2} \sum_{k,l=1}^n |D_p^{(\beta)}(X_k-X_l)|
\, \|Y_k^{(p)}-Y_l^{(p)}\|_2^\beta  +\frac{1}{n^2} \sum_{k,l=1}^n |D_p^{(\beta)}(Y_k-Y_l)|\,
 \|X_k-X_l\|_2^\beta\\& =:&I_{11}+I_{12}. 
\eeao
Since the conditions on $X$ and $Y$ are symmetric  
it suffices to consider $I_{11}$. It will be convenient to set $Z=X_1-X_2$.
Since $((X_i,Y_i))$ are iid 
and independent of $N^{(p)}$ in view of \eqref{eq:1a}  we get the bound
\beao
 \E [I_{11}\mid N^{(p)} ]
&\le & \E\big[|D_p^{(\beta)}(Z)| \mid N^{(p)}\big]
\E\big[\|Y_1^{(p)}-Y_2^{(p)}\|^{\beta}_2\mid N^{(p)}\big]\\
&\le & \E \big[ |D_p^{(2)}(Z)|^{\beta/2}\mid N^{(p)}\big]\, \E\big[\|Y_1^{(p)}-Y_2^{(p)}\|^{\beta}_2\mid N^{(p)}\big]\,.
\eeao
Assume that $X,Y$ have finite variance. Then, by the \slln ,
\beao
\E \big[ |D_p^{(2)}(Z)|^{\beta/2}\mid N^{(p)}\big]
&\le &  \Big( 
\dfrac{1}{N_p} \sum_{i=1}^{N_p} \E[|Z|^2 (\bfU_i)\mid \bfU_i]
+  \int_B\E[|Z|^2 (\bfu)]\,d\bfu \Big)^{\beta/2}\\
&\stas & \Big( 2\,\int_B\E[|Z|^2 (\bfu)]\,d\bfu \Big)^{\beta/2}\,,\qquad p\to\infty\,.
\eeao
The \rhs\ is finite by assumption (1a).
If $X,Y$ have infinite variance and $\beta\in (0,2)$ then
\beao
 \E[|D_p^{(2)}(Z)|^{\beta/2} \mid N_p ] \le  2 \,\E\big[\sup_{\bfu \in B}|Z|^\beta(\bfu)\big]. 
\eeao
The \rhs\ is finite by assumption (1b).
Conditional on $(N^{(p)})_{p>0}$
we have the \con\ 
\beao
|D_p^{(2)}(Z)|^{\beta/2} \stas 0\,,\qquad p\to\infty\,,
\eeao
due the \con\ of the Riemann sums $\|Z^{(p)}\|_2^2\to 
\|Z\|_2^2$ for a.e. realization of $Z$. Then an application
of dominated \con\ yields 
\beao
\E \big[ |D_p^{(2)}(Z)|^{\beta/2}\mid N^{(p)}\big]\stas 0\,.
\eeao
A similar argument shows that $(\E\big[\|Y_1^{(p)}-Y_2^{(p)}\|^{\beta/2}\mid N^{(p)}\big])$ is bounded a.s. We conclude that
$\E \big[ |I_1|\mid N^{(p)}\big]\stas 0\,,\, p\to \infty\,.$
We can deal with $I_2,\,I_3$ in the same way by
 observing that 
\beao
 |I_2| &\le& \frac{1}{n^2} \sum_{k,l=1}^n |D_p^{(\beta)}(X_k-X_l)|\,
 \dfrac{1}{n^2} \sum_{k,l=1}^n \|Y_k^{(p)}-Y_l^{(p)}\|_2^\beta \\&& +\dfrac{1}{n^2} \sum_{k,l=1}^n 
  \|X_k-X_l\|_2^\beta 
  \frac{1}{n^2} \sum_{k,l=1}^n
 |D_p^{( \beta)}(Y_k-Y_l)|, \nonumber
\eeao
and 
\beao
 |I_3| &\le& \frac{1}{n^3} \sum_{k,l,m=1}^n |D_p^{(\beta)}(X_k-X_l)|
\, \|Y_k^{(p)}-Y_m^{(p)}\|_2^\beta + \frac{1}{n^3} \sum_{k,l,m=1}^n  
 |D_p^{(\beta)}(Y_k-Y_m)|\,\|X_k-X_l\|_2^\beta. \nonumber\\
\eeao
We omit further details.
\par
Next we deal with the case $X=Y$. 
Again using the decomposition \eqref{eq:may8f} and writing $Z=X_1-X_2$,
 we observe that 
\beao
\E[|I_{1}|\,\mid\,N^{(p)}]&\le&  \E[|D_p^{(\beta)}(Z)|\,(\|Z^{(p)}\|_2^\beta+\|Z\|_2^\beta)\,\mid\, N^{(p)}]\\
&\le &c\,\E\big[\|Z\|_2^{2\beta}+ \|Z^{(p)}\|_2^{2\beta}
\,\mid \,N^{(p)}\big]\,.
\eeao 
Since we already know that $D_p^{(\beta)}(Z)\stas 0$ conditionally on $(N^{(p)})_{p>0}$  we intend to use dominated \con\ to show that the \lhs\ converges to zero a.s. Therefore we will bound the right-hand terms. First assume that $X$ has finite variance. 
We observe that
\begin{align*}
\E\big[\|Z\|_2^{2\beta} \mid N^{(p)}\big]= 
\E\Big[\Big(\int_B Z^2(\bfu)\,d\bfu\Big)^\beta\Big]
\le \left \{
\begin{array}{ll}
\Big(
\int_B \E[Z^2(\bfu) \,d \bfu]
\Big)^\beta& \text{for}\quad \beta\le 1\,, \\
\int_B \E[|Z|^{2\beta}](\bfu)\, d\bfu &\text{for}\quad \beta > 1\,. 
\end{array}
\right.
\end{align*}
The \rhs\ is finite by assumption (2a). If $X$ has infinite variance 
and $\beta\in (0,1)$ we have $\E\big[\|Z\|_2^{2\beta} \big]\le \E\big[\sup_{\bfu \in B}|Z|^{2\beta}(\bfu)\big]\,,$
and the \rhs\ is finite by assumption (2b). The same bounds apply to 
\beao
\E\big[\|Z\|_2^{2\beta} \mid N^{(p)}\big] & = \E\big[
\big(
N_p^{-1} \sum_{i=1}^{N_p} Z^2(\bfU_i)
\big)^\beta \mid N^{(p)}
\big]\,.
\eeao
Thus for any $\vep>0$, $\P(|I_1|>\vep\,\mid\, N^{(p)})\to 0$ 
as $\nto$ along a.e. sample path of $(N^{(p)})$.
\par
We also have 
\beao
|I_2| & \le & \frac 1 {n^2}
\sum_{k,l=1}^n  |D_p^{(\beta)}(X_k-X_l)|\frac 1 {n^2}
\sum_{k,l=1}^n
\Big(\|X_k^{(p)}-X_l^{(p)}\|_2^\beta + \|X_k-X_l\|_2^\beta\Big)=:I_{21}\,I_{22}.
\eeao
We have already proved 
\beao
\E[I_{21}^{(\beta)} \mid N^{(p)}] &\le& \E[|D_p^{(\beta)}(Z)|\mid N^{(p)}]\to 0\,.
\eeao
Moreover,
\beao
\E[I_{22}\mid N^{(p)}]\le \E[\|Z^{(p)}\|_2^\beta\mid N^{(p)}]+ \E[\|Z\|_2^\beta]\,.
\eeao
We have already shown that the \rhs\ is bounded for a.e. sample path
of $(N^{(p)})_{p>0}$. Therefore we have for any $\vep>0$ and a.e. sample
path of $(N^{(p)})_{p>0}$,
\beao
\P\big(|I_2|>\vep \mid N^{(p)}\big)\to 0\,,\qquad \nto\,.
\eeao
We have 
\begin{align*}
|I_3| \le \frac{1}{n^3} \sum_{k,l,m=1}^n |D_p^{(\beta)}(X_k-X_l)|
\, \big(\|X_k^{(p)}-X_m^{(p)}\|_2^\beta + \|X_k-X_l\|_2^\beta\big),
\end{align*}
hence by Cauchy-Schwarz,
\beao
\E\big[|I_3|\mid N^{(p)}\big]&\le & \E\big[|D_p^{(\beta)}(X_1-X_2)| \big(\|X_1^{(p)}-X_3^{(p)}\|_2^\beta+ \|X_1-X_3\|_2^\beta\big)\mid N^{(p)}\big]\\
&\le &\E\big[|D_p^{(2)}(X_1-X_2)|^{\beta/2} \big(\|X_1^{(p)}-X_3^{(p)}\|_2^\beta+ \|X_1-X_3\|_2^\beta\big)\mid N^{(p)}\big]\\
&\le &\big(\E\big[|D_p^{(2)}(Z)|^{\beta}\mid N^{(p)}\big]\big)^{1/2}
 \Big( \big(  \E\big[\|Z^{(p)}\|_2^{2\beta}\mid N^{(p)}\big] \big)^{1/2}+ 
\big(\E\big[\|Z\|_2^{2\beta}\big]\big)^{1/2}\Big)\,.
\eeao
The \rhs\ converges to zero by arguments similar to those above.
We finally conclude that
\[
 \P\big(|I_3|>\vep \mid N^{(p)}\big)\to 0\,,\qquad \nto\,.
\]
This finishes the proof of \eqref{eq:august19a}.

\section{Proof of Proposition \ref{prop:2}}\label{App:B}\setcounter{equation}{0}
\begin{lemma}
\label{lem:momentcondiap2}
$(1)$ Assume $\beta\in (0,2)$ and, if $X$ has finite second or 
$\beta$th moment, we assume {\rm (A1)} or {\rm (B1)}, 
respectively.
Then there exists a constant $c>0$ such that 
\begin{align*}
\sup_p \E [\|X^{(p)}\|^\beta_2\mid \bfU^{(p)}]\le c \qquad \as
\end{align*}
where $\bfU^{(p)}=(U_1,\ldots,U_p)$.\\
$(2)$ If $\beta\in (1,2)$ and 
$\max_{\bft\in B}\E[|X(\bft)|^{2(2\beta-1)}]<\infty$ 
then we have
$
\E[\|X\|^{2(2\beta-1)}_2]<\infty,
$
and there exists a constant $c>0$ \st
\begin{align*}
\sup_p \E[\|X^{(p)}\|^{2(2\beta-1)}_2\mid \bfU^{(p)}]\le c\qquad \as
\end{align*}
\end{lemma}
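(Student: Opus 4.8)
The plan is to reduce every quantity to the cell averages $\ov X_j$ through the identity $\|X^{(p)}\|_2^2=\tilde p^{-1}\sum'\ov X_j^2$ from \eqref{def:normapp2}, apply Jensen's inequality with the convexity direction dictated by the exponent, and then exploit the decisive fact that the number of occupied cells never exceeds the total number $\tilde p$. In this way the weight $\tilde p^{-1}$ attached to each cell cancels against the count of occupied cells, producing a bound that is \emph{uniform} in $p$. Throughout I use that $X$ is independent of the locations $\bfU^{(p)}=(U_1,\ldots,U_p)$, so that $\E[g(X(\bft))\mid\bfU^{(p)}]$ evaluated at $\bft=\bfU_k$ equals the deterministic function $\bft\mapsto\E[g(X(\bft))]$ at that point. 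Empty cells are assigned $\ov X_j:=0$, so that $\tilde p^{-1}\sum'$ is genuinely an average over the $\tilde p$ uniform weights.

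For Part $(1)$ in the finite-variance case, since $\beta\in(0,2)$ the map $t\mapsto t^{\beta/2}$ is concave, so Jensen's inequality gives
\[
\E\big[\|X^{(p)}\|_2^\beta\mid\bfU^{(p)}\big]\le\big(\E\big[\|X^{(p)}\|_2^2\mid\bfU^{(p)}\big]\big)^{\beta/2}.
\]
Writing $m_j=\#\Delta_j$ and using that $X$ is centered (hypothesis $(2)$), one has $\E[\ov X_j^2\mid\bfU^{(p)}]=\var(\ov X_j\mid\bfU^{(p)})=m_j^{-2}\sum_{k,\ell:\bfU_k,\bfU_\ell\in\Delta_j}\cov(X(\bfU_k),X(\bfU_\ell))\le\sigma^2$, where $\sigma^2:=\sup_{\bft\in B}\var(X(\bft))$ and each covariance is bounded by $\sigma^2$ via Cauchy--Schwarz. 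Condition (A1) together with centering yields $\sigma^2<\infty$, since $\var(X(\bft))\le 2\var(X(\bfs_0))+2c\,\mathrm{diam}(B)^{\gamma_X}$ for a fixed reference point $\bfs_0$. Summing over the at most $\tilde p$ occupied cells gives $\E[\|X^{(p)}\|_2^2\mid\bfU^{(p)}]\le\sigma^2$, hence $\E[\|X^{(p)}\|_2^\beta\mid\bfU^{(p)}]\le\sigma^\beta=:c$. In the infinite-variance case the argument is simpler: each average satisfies $|\ov X_j|\le\max_{\bft\in B}|X(\bft)|$, so the occupied-cell count yields the pathwise bound $\|X^{(p)}\|_2\le\max_{\bft\in B}|X(\bft)|$, whence $\E[\|X^{(p)}\|_2^\beta\mid\bfU^{(p)}]\le\E[\max_{\bft\in B}|X(\bft)|^\beta]<\infty$ by (B1).

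For Part $(2)$, set $q=2(2\beta-1)$, so that $q>2$ for $\beta\in(1,2)$ and $t\mapsto t^{q/2}$ is convex. The claim $\E[\|X\|_2^q]<\infty$ follows from Jensen on the probability space $(B,d\bfu)$ (recall $|B|=1$): $\|X\|_2^q=(\int_B X^2\,d\bfu)^{q/2}\le\int_B|X|^q\,d\bfu$, hence $\E[\|X\|_2^q]\le\max_{\bft\in B}\E[|X(\bft)|^q]<\infty$. For the conditional bound I would apply Jensen twice: first, convexity of $t\mapsto t^{q/2}$ gives $\|X^{(p)}\|_2^q=(\tilde p^{-1}\sum'\ov X_j^2)^{q/2}\le\tilde p^{-1}\sum'|\ov X_j|^q$; then, on each occupied cell, convexity of $|\cdot|^q$ gives $|\ov X_j|^q=|m_j^{-1}\sum_{k:\bfU_k\in\Delta_j}X(\bfU_k)|^q\le m_j^{-1}\sum_{k:\bfU_k\in\Delta_j}|X(\bfU_k)|^q$. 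Taking conditional expectations and using independence of $X$ and $\bfU^{(p)}$, $\E[|\ov X_j|^q\mid\bfU^{(p)}]\le C_q:=\max_{\bft\in B}\E[|X(\bft)|^q]$, and summing over the at most $\tilde p$ occupied cells once more cancels $\tilde p^{-1}$, giving $\sup_p\E[\|X^{(p)}\|_2^q\mid\bfU^{(p)}]\le C_q$ $\as$

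The step I expect to be the main obstacle is securing uniformity in $p$. Since $\tilde p=[p/\log p]\to\infty$, a careless estimate would accumulate a factor growing with the number of cells. The decisive point is precisely that the per-cell weight $\tilde p^{-1}$ in $\|X^{(p)}\|_2^2$ exactly compensates the number of occupied cells, so that the per-cell bounds $\sigma^2$ and $C_q$ pass to the final constant with no residual $p$-dependence. A secondary, more technical point is the uniform-over-$B$ variance bound $\sigma^2<\infty$ in the finite-variance case, which is obtained from the increment condition (A1) and the centering assumption rather than from pointwise moments alone.
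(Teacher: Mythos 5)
Your proof is correct and follows essentially the same route as the paper's: Jensen's inequality applied to the outer average $\tilde p^{-1}\sum'$ and to the inner cell averages, the per-cell moment bound $\sup_{\bft\in B}\E[X^2(\bft)]<\infty$ (resp.\ $\E[\max_{\bft\in B}|X(\bft)|^\beta]<\infty$, resp.\ $\max_{\bft\in B}\E[|X(\bft)|^{2(2\beta-1)}]<\infty$), and the cancellation of $\tilde p^{-1}$ against the at most $\tilde p$ occupied cells. The only cosmetic difference is in Part (1) under (A1): the paper bounds $\ov X_j^2$ pathwise by the cell average of $X^2(\bfU_\ell)$ and keeps the concave power outside, whereas you first apply conditional Jensen and then control $\E[\ov X_j^{\,2}\mid \bfU^{(p)}]$ via covariances and centering --- both reduce to the same uniform second-moment bound.
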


\begin{proof}
$\rm (1)$ {\em Assume {\rm (A1).}} Then $\sup_{\bft \in B}\E[X^2(\bft)]=c_0<\infty$. 
In view of \eqref{def:normapp2} and by Jensen's inequality we have
\beao
\E [\|X^{(p)}\|_2^\beta\mid \bfU^{(p)}] &=&
\E\Big[\big(\tilde p^{-1}\mbox{$\sum'$} 
\ov X_j^2 \big)^{\beta/2} \Big| \bfU^{(p)}\Big]\\
&\le &\E\Big[\Big(\tilde p^{-1}\mbox{$\sum'$} \sum_{\ell:\bfU_\ell\in \Delta_j}
 X^2(\bfU_\ell)/\#\Delta_j 
\Big)^{\beta/2}\Big|\bfU^{(p)}\Big] \\
&\le  &\Big(\tilde p^{-1} 
 \mbox{$\sum'$}\sum_{\ell:\bfU_\ell \in \Delta_j} \E[X^2(\bfU_\ell)\mid \bfU_\ell] /{\#\Delta_j} 
\Big)^{\beta/2} \le  c_0^{\beta/2}\qquad \as 
\eeao
{\em Assume} (B1). Then by similar arguments 
\beao
\E [\|X^{(p)}\|_2^\beta\mid \bfU^{(p)}] &\le &c\,\E\big[
\big(\sup_{\bft\in B} X^2(\bft)\big)^{\beta/2}
\big]
\le c\, \E\big[
\sup_{\bft\in B}|X(\bft)|^\beta
\big]<\infty\qquad \as 
\eeao
$(2)$ The first statement is immediate. Using Jensen's inequality, we also
have
\beao
\E[\|X^{(p)}\|^{2(2\beta-1)}_2\mid \bfU^{(p)}] & \le & 
\tilde p^{-1}\mbox{$\sum'$}
\sum_{k:\bfU_k \in \Delta_j} 
\E[|X(\bfU_k)|^{2(2\beta-1)}\mid \bfU_k ]/\#\Delta_j \\
& \le& c \,\max_{ \bft\in B} \E [|X(\bft)|^{2(2\beta-1)}] <\infty \qquad \as  
\eeao
\end{proof}
Now we proceed with the proof of the proposition.
 In what follows, it will be convenient
to write $Z=X_1-X_2$.
We use the decomposition \eqref{eq:may8f} 
adjusted to the present situation.\\
{\bf 1.} {\em We assume} (A1).\\ 
{\em First we study $I_{1}$.} By a symmetry argument it suffices to 
consider $I_{11}$.\\
{\em Assume $\beta\in(0,1]$.}
Using the independence of $X,Y$ and [Lemma \ref{lem:momentcondiap2} (1)], we have\begin{align}
 \E[I_{11}] &= \E[ \E[I_{11}\mid \bfU^{(p)}] ] \nonumber \\
            &\le \E \big[ \E \big[|D_p^{(\beta)}(Z)| \,\big|\, \bfU^{(p)}
\big] \, \E[\|Y_1^{(p)}-Y_2^{(p)}\|_2^\beta \,\mid \,\bfU^{(p)}] \big] \nonumber \\
& \le c\, \E [|D_p^{(\beta)}(Z)
|] 
\le c \,\E [\|Z^{(p)}-Z\|_2^\beta]\,, \label{ineq:I11betaless1}
\end{align}
where we used $\beta\in (0,1]$ in the last step. We have
\beam
\E [\|Z^{(p)}-Z\|_2^\beta]&=&\E \Big[
\Big(
\sum_{j=1}^{\tilde p} \int_{\Delta_j}(
\ov Z_j - Z(\bft))^2\,d\bft 
\Big)^{\beta/2}\Big] \label{secondmethodpfineq1} \\
& \le &
\E \Big[
\Big(
\mbox{$\sum'$} \int_{\Delta_j}(
\ov Z_j - Z(\bft))^2\,d\bft 
\Big)^{\beta/2}\Big]+\E \Big[ 
\Big( \sum_{j\le \tilde p: \#\Delta_j=0}\int_{\Delta_j}Z^2(\bft)\,d\bft 
\Big)^{\beta/2}\Big]\,.\nonumber\\ 
&=:&J_1+J_2\,.\nonumber 
\eeam
Then an application of Jensen's inequality yields
\beao
J_2&\le & 
\Big(\E\Big[ \sum_{j\le \tilde p: \#\Delta_j=0}\int_{\Delta_j}Z^2(\bft)\,d\bft 
\Big]\Big)^{\beta/2}\\
&=& \Big(\E\Big[ \sum_{j\le \tilde p: \#\Delta_j=0} \E\big[\int_{\Delta_j}Z^2(\bft)\,d\bft 
 \big| \bfU^{(p)} \big] \Big] \Big)^{\beta/2} \\
&\le &c\, \Big(\max_{\bft\in B} \var(X(\bft))\Big)^{\beta/2}
\,\Big(\E\Big[ \tilde p^{-1} \sum_{j\le \tilde p} \1(\#\Delta_j=0) \Big] \Big)^{\beta/2}\\
&\le &c\,\Big(\P\Big(\sum_{k=1}^p \1_{\Delta_1}(\bfU_k)=0 \Big)\Big)^{\beta/2}\\
&=& c\,\big((1- \tilde p^{-1})^p\big)^{\beta/2}\le c\,p^{-\beta/2}\,.
\eeao
Now we turn to $J_1$. Then applications of 
Jensen's inequality and (A1) yield
\beao
J_1
&\le &
\Big(\E\Big[
\mbox{$\sum'$} \int_{\Delta_j}(
\ov Z_j - Z(\bft))^2\,d\bft \Big]
\Big)^{\beta/2}\\
&\le & \Big(\E\Big[
\mbox{$\sum'$}  (\#\Delta_j)^{-1}\sum_{k:\bfU_k\in \Delta_j} \int_{\Delta_j}  
(Z(\bfU_k)- Z(\bft))^2 d\bft \Big]
\Big)^{\beta/2}\\
&=&\Big(\E\Big[
\mbox{$\sum'$}(\#\Delta_j)^{-1} \sum_{k:\bfU_k\in \Delta_j} \int_{\Delta_j}  
\E\big[(Z(\bfU_k)- Z(\bft))^2\mid \bfU_k\big] d\bft \Big]
\Big)^{\beta/2}\\
&\le &c\,
\max_{j=1,\ldots,\tilde p}\max_{\bfv,\bft\in \Delta_j}\,\big(\E\big[(X(\bfv)- X(\bft))^2\big] \big)^{\beta/2}\\
&\le &c\,|{\bf1}/\tilde p^{1/d}|^{\gamma_X\beta/2}= (\sqrt{d/\tilde p^{2/d}})^{\gamma_X\beta/2}=c\,\tilde p^{-\beta\gamma_X/(2d)}\,.
\eeao
We conclude that
\beao
\E[I_{11}]\le c\, (J_1+J_2)\le c\,\big(p^{-1}+  \tilde p^{-\gamma_X/d}\big)^{\beta/2}\,,
\eeao
and in turn
\beam\label{eq:nov23}
\E[|I_1|]\le c\,\big(p^{-1}+  \tilde p^{-(\gamma_X\wedge \gamma_Y)/d}\big)^{\beta/2}\,.
\eeam
{\em Assume $\beta \in (1,2)$.} 
Applying $|x^\beta-y^\beta|\le (x\vee y)^{\beta-1}|x-y|$ for positive $x,y$ and 
H\"older's inequality, we obtain
\beam
\E \big[ | D_p^{(\beta)}(Z) 
| \big] 
&\le& c\, \E\big[ \big(\|Z^{(p)}\|_2^{\beta-1} \vee \|Z\|_2^{\beta-1}\big)\, 
| D_p^{(1)}(Z) |
\big] \label{ineq:holder} \\
&\le & c\, \big(\E\big[ \|Z^{(p)}\|_2^{2} \vee \|Z\|_2^{2} 
| \big]\big)^{(\beta-1)/2} \times \big( \E \big[
\|Z^{(p)}-Z\|_2^{2/(3-\beta)} 
\big] \big)^{(3-\beta)/2} =: c\, P_1 P_2. \nonumber 
\eeam
Since we assume finite second moments via (A1) we have  $P_1<\infty$.
Moreover,
\beao
P_2^{2/(3-\beta)}&=&\E \big[
\|Z^{(p)}-Z\|_2^{2/(3-\beta)}\big] =
\E \Big[
\Big(
\sum_{j=1}^{\tilde p} \int_{\Delta_j}(
\ov Z_j - Z(\bft))^2\,d\bft \big)
\Big)^{1/(3-\beta)}\Big]\,.
\eeao
Observing that $(3-\beta)^{-1}<1$ we can use the same arguments as for
$\beta\in(0,1]$ to obtain the bound 
\beao
 P_2^{2/(3-\beta)} \le c\, \big(p^{-1}+\tilde p^{-\gamma_X/d}\big)^{1/(3-\beta)}. 
\eeao
Under (A1),
we conclude that we have the following upper bound for $\beta\in (1,2)$:
\beao
 \E[|I_1|] \le c\,(\tilde p^{-(\gamma_X \wedge \gamma_Y)/d}+p^{-1})^{1/2}. 
\eeao
Combining this bound with \eqref{eq:nov23}, we finally have for any $\beta\in (0,2)$,
\beao
\E[|I_1|] \le \, c\,(\tilde p^{-(\gamma_X \wedge \gamma_Y)/d}+p^{-1})^{(1\wedge \beta)/2}\,.
\eeao
{\em Now we turn to $I_2,I_3$.} We have
\beao
 |I_2| &\le& \frac{1}{n^2} \sum_{k,l=1}^n |D_p^{(\beta)}(X_k-X_l)|\,
  \frac{1}{n^2} \sum_{k,l=1}^n \| Y_k^{(p)}-Y_l^{(p)}\|_2^\beta 
 +\frac{1}{n^2} \sum_{k,l=1}^n 
  \|X_k-X_l\|_2^\beta 
  \frac{1}{n^2} \sum_{k,l=1}^n |D_p^{(\beta)}(Y_k-Y_l)| 
\eeao
and a similar bound holds for $|I_3|$. The same arguments as for $\E[|I_1|]$ 
yield 
\[
 \E[|I_2+I_3|] \le c \big(
\tilde p^{-(\gamma_X\wedge \gamma_Y)/d}+p^{-1}
\big)^{(1 \wedge \beta)/2}.
\]
We omit further details.\\[2mm]
{\bf 2.} Now we assume that $X,Y$ have finite $\beta$th moment for some $\beta\in(0,2)$
and (B1), (B2) hold.\\
{\em First we study $I_1$ and focus on $I_{11}$.} 
We follow the patterns of the proof under condition (A1) in the 
finite variance case.\\
{\em Assume $\beta\in (0,1]$.} We again write $Z=X_1-X_2$. In view of 
\eqref{ineq:I11betaless1} and [Lemma \ref{lem:momentcondiap2} (1)] it suffices to 
bound $J_1,J_2$.
If (B1) holds we can proceed as in part {\bf 1}.:
\beao
J_2&\le & 
\Big(\E\Big[ \tilde p^{-1} \sum_{j=1}^{\tilde p} \1(\#\Delta_j=0)\Big]\Big)^{\beta/2}\, \E\Big[\max_{\bft\in B}|Z(\bft)|^\beta\Big] \le c\,p^{-\beta/2}\,.
\eeao
We also have by (B2)
\beao
J_1&\le & c\,\tilde p^{-\beta/2}\,\sum_{j=1}^{\tilde p}\E\Big[\max_{\bfv,\bft\in\Delta_j}|X(\bft)-X(\bfv)|^\beta\Big]
\le  c\,\wt
p^{1-\gamma_X/d-\beta/2}\,.
\eeao
We conclude that
\begin{align}
\label{betamoment:I1}
 \E[|I_1|] \le c p^{1-\beta/2}(\tilde p^{-(\gamma_X\wedge \gamma_Y)/d} +p^{-1}).
\end{align}
Similar bounds hold for $I_2,I_3$. We omit details.\\
{\em Assume $\beta\in (1,2)$.} We start from \eqref{ineq:holder}:
\beao
\E [ | D_p^{(\beta)}(Z) 
| ] 
&\le& c\, \E\big[\big( \|Z^{(p)}\|_2^{\beta-1} \vee \|Z\|_2^{\beta-1}\big) 
| D_p^{(1)}(Z) | 
\big] \\
& \le& c \, \big( \E\big[ \|Z^{(p)}\|_2^{\beta} \vee \|Z\|_2^{\beta}\big] \big)^{(\beta-1)/\beta}
\big( \E \big[
\|Z^{(p)}-Z\|_2^\beta \big]\big)^{1/\beta}  =: c\,\wt P_1 \wt P_2. 
\eeao
The quantity $\wt P_1$ is bounded by a constant for all $p$.
Proceeding as for $\beta\in (0,1]$, we obtain
\beao
 \wt P_2 \le c \big(p^{1-\beta/2}(\tilde p^{-\gamma_X/d}+p^{-1} )\big)^{1/\beta},
\eeao
and we finally have
\[
 \E [|I_1|] \le c \big(p^{1-\beta/2}(\tilde p^{-(\gamma_X\wedge \gamma_Y)/d}+p^{-1})\big)^{1/\beta}.
\]
The quantities $\E[|I_i|],i=2,3$ can be bounded in a similar way. This proves part {\bf 2}.\\
{\bf 3.} We will show that $\E[|T_{n,\beta}(X,X)-T_{n,\beta}(X^{(p)},X^{(p)})|]\to 0$
as $\nto$.
We again use the decomposition \eqref{eq:may8f} 
for $X=Y$.\\
{\em The finite variance case. Assume {\rm (A1), (A2).}}\\ 
{\em Assume $\beta\in(0,1]$.} 
We 
observe that 
\begin{align}
 \E[|I_1|] &\le \E \big[
\|Z^{(p)}-Z\|_2^\beta
(\|Z^{(p)}\|_2^{\beta}+\|Z\|_2^\beta)
\big] \nonumber \\
& \le \big(\E [
\|Z^{(p)}-Z \|_2^{2\beta}]
\big)^{1/2} \big(
(\E[\|Z^{(p)}\|_2^{2\beta}])^{1/2}+ (\E[\|Z\|_2^{2\beta}])^{1/2}
\big). \label{ineq:dvfinite:betale1}
\end{align}
We can re-use the argument for bounding
 $\E[\|Z^{(p)}-Z\|^\beta_2]$, replacing $\beta/2$ by $\beta$
in the derivation. Thus the first expectation in \eqref{ineq:dvfinite:betale1} is
 bounded by $c\big(
\tilde p^{-\gamma_X/d}+p^{-1}
\big)^\beta$. The remaining two expectations are finite thanks to [Lemma
 \ref{lem:momentcondiap2} (1)]. Thus we have
\[
 \E[|I_1|]\le c \big(
\tilde p^{-\gamma_X/d}+p^{-1}
\big)^{\beta/2}\to 0\,,\qquad \nto\,.
\]
{\em Assume $\beta \in (1,2)$.} 
We may proceed similarly as for the distance covariance. We have
\beao
\E[|I_1|]&\le & \E[\big|
\|Z^{(p)}\|_2^{2\beta} -\|Z\|_2^{2\beta} 
\big|] \\
& \le& c \E \big[
(\|Z^{(p)}\|_2^{2\beta-1} \vee \|Z\|_2^{2\beta-1})\,
\|Z^{(p)}- Z\|_2
\big] \\
& \le& c \big(
\E [\|Z^{(p)}\|_2^{2(2\beta-1)} \vee \|Z\|_2^{2(2\beta-1)} ]
\big)^{1/2} \big(\E[
\|Z^{(p)}- Z\|_2^2] 
\big)^{1/2} \\
&\le & c \,  \big(\E[
\|Z^{(p)}- Z\|_2^2] 
\big)^{1/2}\,.
\eeao
In the last step we used  [Lemma~\ref{lem:momentcondiap2} (2)] and (A2). 
Now we may proceed as for the bound of \eqref{secondmethodpfineq1} to obtain 
\[
\E[|I_1|]\le c \big(
\tilde p^{-\gamma_X/d} +p^{-1}
\big)^{1/2}\to 0\,,\qquad \nto\,.
\]
We can deal with $\E[|I_2|],\E[|I_3|]$ in the same way by observing that 
\begin{align*}
 |I_2| \le& \frac{1}{n^4} \sum_{j,k,l,m=1}^n 
|D_p^{(\beta)}(X_j-X_k)|
\| X_l^{(p)}-X_m^{(p)}\|_2^\beta +\frac{1}{n^4} \sum_{j,k,l,m=1}^n 
  \|X_j-X_l\|_2^\beta |D_p^{(\beta)}(X_l-X_m)| \\
&=: I_{21}+I_{22}
\end{align*}
and 
\begin{align*}
 |I_3| \le& \frac{1}{n^3} \sum_{k,l,m=1}^n |D_p^{(\beta)}(X_k-X_l)|
\| X_l^{(p)}-X_m^{(p)}\|_2^\beta +\frac{1}{n^3} \sum_{k,l,m=1}^n 
  \|X_k-X_l\|_2^\beta  
|D_p^{(\beta)}(X_l-X_m) |
,\\
&=: I_{31}+I_{32}.
\end{align*}
This means that
 $\E[I_{21}],\E[I_{22}],\E[I_{31}],\E[I_{32}]$ can be  bounded in a similar way
and these expectations converge to zero as $\nto$.  We illustrate this for $I_{32}$.\\
{\em Assume $\beta\in(0,1]$.} By the Cauchy-Schwarz
 inequality and using similar bounds as above, we have 
\begin{align}
\label{ineqi32}
\begin{split}
 \E[I_{32}] &\le \big(
\E [\|X_1-X_2\|_2^{2\beta}]
\big)^{1/2} \big(
\E[|D_p^{(\beta)}|^2]
\big)^{1/2} \\
& \le \big(
\E [\|X_1-X_2\|_2^{2\beta}]
\big)^{1/2} \big( \E 
\big[ \|X_1^{(p)}-X_3^{(p)} -(X_1-X_3)\|_2^{2\beta} \big] 
\big)^{1/2} \\
& \le c\big(
\tilde p^{-\gamma_X/d} +p^{-1}
\big)^{\beta/2}\to 0\,,\qquad \nto\,.
\end{split}
\end{align}
For $\beta\in (1,2)$, multiple use of H\"older's inequality yields
\beao
  \E[I_{32}] &\le &
c\E[
\|(X_1^{(p)}-X_2^{(p)})-(X_1-X_2)\|_2
\big(
\|X_1^{(p)}-X_2^{(p)}\|_2^{\beta-1}\vee \|X_1-X_2\|_2^{\beta-1}
\big) \|X_1-X_3\|_2^{\beta}
] \\
& \le& 
c \big(
\E[\|Z^{(p)}-Z\|_2^{2}]
\big)^{1/2} \big(
\E [\|Z\|_2^{2(2\beta-1)} ]
\big)^{\frac{\beta}{2(2\beta-1)}}\,\big(
\E[
\|Z^{(p)}\|_2^{2(2\beta-1)}\vee \|Z\|_2^{2(2\beta-1)}
]
\big)^{\frac{\beta-1}{2(2\beta-1)}} \\
&\le& 
c \,\big(
\E [\|Z^{(p)}-Z \|_2^{2} ]
\big)^{1/2} \big(
\E [\|Z\|_2^{2(2\beta-1)} ]
\big)^{\frac{\beta}{2(2\beta-1)}}
\,\\
&&\times \Big(
\big(
\E[
\|Z^{(p)}\|_2^{2(2\beta-1)} ]\big)^{\frac{\beta-1}{2(2\beta-1)}}
 + \big(
\E[ \|Z\|_2^{2(2\beta-1)}
]
\big)^{\frac{\beta-1}{2(2\beta-1)}} \Big).
\eeao
The first quantity is bounded by 
$c \big(
\tilde p^{-\gamma_X/d}+p^{-1}
\big)^{1/2}$ and the other quantities are finite by virtue of [Lemma
 \ref{lem:momentcondiap2} (2)]. Hence $\E[I_{32}]\to 0$, $\nto$.
\\[1mm]
\noindent {\em  The infinite variance case. We assume 
a finite $2\beta$th moment of $X$, $\beta\in(0,1)$, and {\rm (B3).}}\\
{\em Let $2\beta\in(0,1]$.} First we follow the inequality 
\eqref{ineq:dvfinite:betale1} of distance variance 
and then we use the bound of $\E[\|Z^{(p)}-Z\|_2^{2\beta}]$ 
for the distance covariance in the infinite variance case; see \eqref{betamoment:I1} with $\beta$ replaced by $2\beta$. 
By [Lemma \ref{lem:momentcondiap2} (1)], $\E[
\|Z^{(p)}\|_2^{2\beta}]$ and  $\E\|Z\|_2^{2\beta }
]$ are bounded by a constant. As a result we have by (B3),
\begin{align}
 \E[|I_1|]  \le c \big( \E[\|Z^{(p)}-Z\|_2^{2\beta}]\big)^{1/2}  \le c \big( p^{1-\beta} \big(
 \tilde p^{-\gamma_X'/d}+p^{-1} 
\big) \big)^{1/2}.\label{I1:dv:betamoment1}
\end{align}
The \rhs\ converges to zero as $\nto$ provided $1<\beta+\gamma_X'/d$.
For $2\beta \in (1,2)$ we have by H\"older's inequality 
\begin{align*}
 \E[|I_1|] & \le 
\E \big[
\big(\|Z^{(p)}\|_2^{2\beta-1} \vee \|Z\|_2^{2\beta-1}\big)
\|Z^{(p)}-Z\|_2
\big] \\
& \le c \big(\E[
\|Z^{(p)}\|_2^{2\beta} \vee \|Z\|_2^{2\beta}
]
\big)^{(2\beta-1)/(2\beta)} \big(
\E[ \|Z-Z^{(p)}\|_2^{2\beta}]
\big)^{1/(2\beta)} \\
&= c\widehat P_1 \widehat P_2. 
\end{align*}
We can bound $\wh P_2$ in the same way as for $\beta\in(0,1/2]$:
\beao
 \widehat P_2 \le c \big(
p^{1-\beta}\big(
\tilde p^{-\gamma_X'/d}+p^{-1}
\big)
\big)^{1/(2\beta)} .
\eeao
The quantity $\widehat P_1$ is finite by virtue of 
[Lemma \ref{lem:momentcondiap2} (1)] and (B3). Thus
we have the bound for general $\beta\in (0,1)$: 
\beao
 \E[|I_1|] \le c \big(
p^{1-\beta} 
(\tilde p^{-\gamma_X'/d}+p^{-1}
) \big)^{(1\wedge \beta^{-1})/2}\to 0\,,\qquad \nto .
\eeao
For $I_2$ and $I_3$, recall the bounds $|I_2|\le I_{21}+I_{22}$ and
 $|I_3|\le I_{31}+I_{32}$, and take expectations on both sides of the 
inequalities. We illustrate how to deal with $\E[I_{32}]$. 
Since $2\beta \in (0,2)$ inequality
 \eqref{ineqi32} holds.
Then for $2\beta \in (0,1]$ it follows from \eqref{I1:dv:betamoment1} that $\E[I_{32}]\le c \big(p^{1-\beta}(\tilde p^{-\gamma_X'/d}+p^{-1})\big)^{1/2}$, 
and for $2\beta \in (1,2)$ we obtain the same bound as for $\wh P_2$ by the similar calculations.\\
This finishes the proof.

\appendix

\section{Negative moments of a Poisson \rv}\setcounter{equation}{0}
\label{Sec4supp}
We consider a family $(N_p)_{p>0}$ of Poisson \rv s with $\E[N_p]=p$ and 
are interested in the \asy\ behavior of $\E[N_p^{-\gamma}\1(N_p>0)]$ for $\gamma>0$ as $p\to\infty$.
\ble
\label{lem:poisson}
We consider a family $(N_p)_{p>0}$ of Poisson \rv s with $\E[N_p]=p$. Then for
any $\gamma>0$, $p^\gamma\E[N_p^{-\gamma}\1(N_p>0)]\to 1$. 
\ele
\begin{proof} We observe that
\beao
\E[N_p^{-\gamma}\1(N_p>0)]&=&\Big(\sum_{k\in A_p}+ \sum_{k\in A_p^c}\Big) k^{-\gamma}\ex^{-p} \dfrac{p^k}{k!}\,,
\eeao
where $A_p=\{k\ge 1:|k-p|>\vep_p\,p\}$ and 
$A_p^c=\{k\ge 1:|k-p|\le \vep_p\,p\}$  and $\vep_p= \sqrt{ (C\log p)/p}$
for some constant $C=C(\gamma)>0$. We will show that 
\beam\label{eq:march10a}
p^\gamma\,\P(|N_p-p|>\vep_p\,p)\to 0\,,\qquad p\to\infty\,.
\eeam
Then $\P(|N_p-p|\le \vep_p\,p)\to 1$ and 
\beao
p^{\gamma}\,\sum_{k\in A_p^c} k^{-\gamma}\ex^{-p} \dfrac{p^k}{k!}
&=&\sum_{k\in A_p^c} (p/k)^{\gamma}\ex^{-p} \dfrac{p^k}{k!}\\
&=&(1+o(1))\,\P(|N_p-p|\le \vep_p\,p)\to 1\,.
\eeao
Next we turn to the proof of \eqref{eq:march10a}. By Markov's inequality
and a Taylor expansion we have for $h=\vep_p$, 
\beao
p^\gamma\,\P(N_p>(1+\vep_p)\,p)&\le & p^\gamma\,\ex^{-h\,(1+\vep_p)\,p}\,\E[\ex^{h\,N_p}]
=p^\gamma\,\exp(-h\,(1+\vep_p)\,p -p(1-\ex^{h}))\\
&=&p^\gamma\,\exp\big(-(\vep_p\,p+\vep_p^2\,p) + (\vep_p\,p + 0.5 \vep_p^2\,p(1+o(1)))\big)\\
&=&\exp\big(-0.5 \vep_p^2\,p(1+o(1)) + \gamma\,\log p\big)\\&=& 
\exp\big(-0.5 C \log p (1+o(1))+\gamma\,\log p\big)\to 0\,,\qquad p\to\infty,
\eeao 
provided $C >2\gamma$. On the other hand, by monotonicity of the Poisson
\pro ies for $k\le p$ we have 
\beao
\ex^{-p} \dfrac{p^k}{k!}\le 
\ex^{-p} \dfrac{p^{[p(1-\vep_p)]}}{[p(1-\vep_p)]!}\,,\qquad k\le [p(1-\vep_p)]\,.
\eeao
Hence by Stirling's formula, $(\sqrt{2\pi n}(n/\ex)^n\le n!)$,
\beao
p^\gamma\,\P(p-N_p>\vep_p\,p) 
&=& p^\gamma\,\P(N_p< p(1-\vep _p))
\le p^{\gamma+1}\ex^{-p} \dfrac{p^{[p(1-\vep_p)]}}{[p(1-\vep_p)]!}\\
&\le &c\,p^{\gamma+0.5}\ex^{-p+[p\,(1-\vep_p)]}\Big(\dfrac{p}{[p(1-\vep_p)]}\Big)^{[p(1-\vep_p)]}\\
&\le & c\exp\Big( (\gamma+0.5)\log p -p\,\vep_p+  [p(1-\vep_p)]\,\log \dfrac{p}{[p(1-\vep_p)]} \Big)=:c\,\ex^{v(p)}\,.
\eeao
We have by a Taylor expansion, for large $p$,
\beao
v(p)-(\gamma+0.5)\,\log p&=& -p\,\vep_p+  [p(1-\vep_p)]\Big( \dfrac{p}{[p(1-\vep_p)]}-1 - \dfrac 1 2 \Big(\dfrac{p}{[p(1-\vep_p)]}-1\Big)^2(1+o(1))\Big)\\
&\le & - 0.25\,p\,\vep_p^2= -0.25\,C \log p\,.
\eeao
Choosing $C>4\,(\gamma+0.5)$, we have $\ex^{v(p)}\to 0$.
Combining these bounds, we proved the lemma.
\end{proof}

\noindent {\bf Acknowledgments}
Muneya Matsui's research is partly supported by the JSPS Grant-in-Aid for Scientific Research C
(19K11868). Thomas Mikosch's research is partially supported by Danmarks Frie Forskningsfond Grant No 9040-00086B

\end{document}